    \numberwithin{equation}{section}%
    \numberwithin{table}{section}%
    \numberwithin{figure}{section}
\newtheorem{lemma}{Lemma}[section]
\DeclareMathAlphabet{\mathmatrix}{OT1}{ptm}{b}{n}
\DeclareMathAlphabet{\mathvector}{OT1}{ptm}{bx}{it}
\def\SS{\mathbb{S}}
\def\PP{\mathbb{P}}
\def\NN{\mathbb{N}}
\def\ZZ{\mathbb{Z}}
\def\RR{\mathbb{R}}
\def\tr{\mathsf{T}}
\def\mathbf{\boldsymbol}
\def\e{\mathrm{e}}
 \def\CP{{\mathcal P}}
  \def\CH{{\mathcal H}}
\def\ball{\mathbb{B}^d}
\def\sph{\mathbb{S}^{d-1}}
 \def\o{{\omega}}
 \def\s{\sigma}
\newcommand{\wh}{\widehat}
\def\diag{\operatorname{diag}}
\def\spn{\operatorname{span}}
\begin{document}
\title[Spectral and Spectral Element Methods for Schr\"{o}dinger Equations]{Efficient Spectral and Spectral Element Methods for  Eigenvalue Problems of  Schr\"{o}dinger Equations with an Inverse Square Potential}

\thanks{The first author was partially  supported by the National Natural Science Foundation of China (No. 91130014, 11471312 and 91430216). The second author was partially  supported by the US National Science Foundation (DMS-1419040) and the National Natural Science Foundation of China (No. 11471031 and 91430216).}

\author[H. Li]{HUIYUAN LI}
\address{State Key Laboratory of Computer Science/Laboratory of Parallel Computing,  Institute of Software, Chinese Academy of Sciences, Beijing 100190, China.}
\email{huiyuan@iscas.ac.cn}

\author[Z. Zhang]{Zhimin Zhang}
\address{Beijing Computational Science Research Center, Beijing 100193, China. Also at Department of Mathematics, Wayne State University, Detroit, MI 48202, USA}
\email{zmzhang@csrc.ac.cn,zzhang@math.wayne.edu}

%\date{}

\begin{abstract}
In this article, we study numerical approximation of eigenvalue problems of the Schr\"{o}dinger operator $\displaystyle -\Delta u + \frac{c^2}{|x|^2}u$. There are three stages in our investigation: We start from a ball of any dimension, in which case the exact solution in the radial direction can be expressed by  Bessel functions of fractional degrees. This knowledge helps us to design two novel spectral methods by modifying the polynomial basis to fit the singularities of the eigenfunctions. At the second stage, we move to circular sectors in the two dimensional setting. Again the radial direction can be expressed by Bessel functions of fractional degrees. Only in the tangential direction some modifications are needed from stage one. At the final stage, we extend the idea to arbitrary polygonal domains. We propose a mortar spectral element approach: a polygonal domain is decomposed into several sub-domains with each singular corner including the origin covered by a circular sector, in which origin and corner singularities are handled similarly as in the  former stages, and the remaining  domains are either a standard quadrilateral/triangle or a quadrilateral/triangle with a circular edge, in which the traditional polynomial based spectral method is applied. All sub-domains are linked by mortar elements (note that we may have hanging nodes). In all three stages, exponential convergence rates are achieved. Numerical experiments indicate that our new methods are superior to standard polynomial based spectral (or spectral element) methods and $hp$-adaptive methods. Our study offers a new and effective way to handle eigenvalue problems of the Schr\"{o}dinger operator including the Laplacian operator on polygonal domains with reentrant corners.
\end{abstract}

\keywords{Schr\"{o}dinger equation, inverse square potential,  eigenvalues, singularity,
spectral/spectral element method, exponential order}

\subjclass{65N35, 65N25, 35Q40}

%\begin{AMS}

%\end{AMS}
%65N35      Numerical analysis on  Spectral, collocation and related methods
%65N25  	 Numerical analysis on Eigenvalue problems
%35P15   Partial differential equations; Estimation of eigenvalues, upper and lower bounds
%35Q30   Partial differential equations ;	Stokes and Navier-Stokes equations
%76M22     	Fluid mechanics  Spectral methods
%41A10  	Approximation by polynomials
%35Q40  	PDEs in connection with quantum mechanics

\maketitle

\iffalse
\begin{center}
Suggested Contents:
\end{center}
\begin{itemize}
\item Introduction
\item Novel spectral method on a unit ball
\begin{itemize}
\item ball functions
\item ball polynomials
\end{itemize}
\item Novel spectral method on a sector
\item Mortar Spectral Element Methods
\begin{itemize}
\item regular domains
\item reentrant corners $c=0$

$e^{-\sigma N^{1/d}} ?$
decaying rate

\end{itemize}

\end{itemize}
\fi

%\tableofcontents

\section{Introduction}

 The Schr\"{o}dinger operator is extremely important in science
and there are several different  forms of this remarkable operator.
The Schr\"{o}dinger operator with the inverse square singular potential has attracted  quite a large interest in the recent literature owing to its fundamental role both in mathematics and in physics.
Mathematically, the inverse square potential possesses the same homogeneity or ``differential order" as the Laplacian, while it usually invokes strong singularities of  the Schr\"{o}dinger  eigenfunctions  and thus  cannot be treated as a lower order perturbation term \cite{KSWW75,CH06,FT06,FMT07}.
 On the other hand,  the inverse square potential represents an intermediate threshold between the regular potential and singular potential in nonrelativistic quantum mechanics \cite{Case,FLS71}.
Furthermore, the inverse square singular potential also arises in many other fields, such as nuclear physics, molecular physics, and quantum cosmology; we  refer to \cite{Case,FLS71} for a comprehensive overview.
Therefore, new tools and approaches  are urgently needed   for  such Schr\"{o}dinger operators  both in  analysis  and in numerics.
In addition, the geometry of the domain such as the presence of reentrant corners  also plays  a critical role which may reduce the regularity of the eigenfunctions.
 % and many other physical contexts: molecular physics,  quantum cosmology, linearization of combustion models.

In this article, we consider the eigenvalue problem of  the Schr\"{o}dinger equation with an inverse square potential:
\begin{align}
\label{eigenP}
\begin{cases}
 -\Delta u + \dfrac{c^2}{|x|^2} u = \lambda u, & \text{ in } \Omega, \\
 u = 0 , & \text{ on }  \partial\Omega,
 \end{cases}
\end{align}
 where
$\Omega$ is a bounded domain in $\mathbb{R}^d$ and  the origin $O$
is assumed to be in $ \overline{\Omega}$.
 Here we consider a Dirichlet boundary condition, but
other boundary conditions can be treated similarly.

Define the Sobolev spaces
\begin{align*}
W^1(\Omega) = H^1(\Omega) \cap L^2_{r^{-2}}(\Omega),
\qquad W^1_0(\Omega) = H^1_0(\Omega) \cap L^2_{r^{-2}}(\Omega),
\end{align*}
equipped with the norm
\begin{align*}
\|u\|_{W^1(\Omega)}  = \left(\|\nabla u\|^2 + \|u\|_{r^{-2}}^2\right)^{1/2}.
\end{align*}
Then the  variational  form of \eqref{eigenP} reads: Find
$\lambda\in \mathbb{R}$ and $ u\in W_0^1(\Omega)\setminus\{0\}$ such that
\begin{align}
\label{variationalP}
a(u,v):=  (\nabla u, \nabla v)_{\Omega} + c^2 (u,v)_{r^{-2},{\Omega}}  = \lambda(u,v)_{\Omega}, \quad v\in W_0^1(\Omega).
\end{align}
By  the Sturm-Liouville theory, there exists a sequence of eigenvalues
$$
     0 <  \lambda_1 < \lambda_2 \le   \dots \le \lambda_k  \le \dots \nearrow +\infty.
$$
It is well known that  that  $\lambda_k = \mathcal{O}( k^{2/d})$ for Laplacian eigenvalues ($c=0$)  as $k$ tends to infinity \cite{Weyl1911},  and this result is also valid for $c\neq 0$ by a Hardy-type inequality.
 Based on the variational form, Galerkin type numerical schemes can be designed. However,
low order methods have only limited  convergence rates, even if adaptive schemes are applied.
Readers are referred to  \cite{Reddien,LO14,LO15}  and the references therein  for this line of research.
Likewise, owing to the strong singularities of the underlying eigenfunctions arising from  both the singular potential
and the reentrant/obtuse corners of the domain,  classic  high order methods  including spectral/spectral element methods usually fail to achieve an exponential rate of convergence (see \S \ref{BallNE}  and refer  to \cite{BNZ05, GS07}).

The aim of this article is to  propose novel numerical methods for \eqref{eigenP}  with the intention  of reviving  spectral methods and spectral element methods.
A key idea is to use specially designed spectral basis functions to mimic the singular behavior of
eigenfunctions. We start from $\Omega$  a ball of dimension $d$, when
the radial component of an eigenfunction can be expressed explicitly by  Bessel functions of degree $\nu = \sqrt{(n+d/2-1)^2+c^2}$ together with the multiplier $r^{1-d/2}$.
Based on this knowledge, two classes of non-polynomial  Sobolev orthogonal basis functions can be designed to
incorporate the singularity $r^{\nu+1-d/2}$
to achieve an exponential rate of convergence. This idea is then extended to $\Omega$ being a sector,
and
to simplify the presentation, we concentrate on the two dimensional setting from now on. Our ultimate goal is for
$\Omega$  to be a polygonal domain, especially with reentrant corners. We propose a novel mortar
spectral element method: at each  singular corner including the origin, we attach a circular disc/sector,
on which a class of non-polynomial spectral basis functions are applied which depend on  the angle of the corner.
Other parts of  $\Omega$ are decomposed into  quadrilaterals/triangles, where some have one circular edge.
On these sub-domains, traditional spectral polynomial basis functions are used.  The two types of sub-domains are
linked smoothly by the mortar element idea. Again, we observe the exponential rate of convergence $\e^{-\sigma\sqrt{DoF}}$ with an almost uniform $\sigma$ for consecutive eigenvalues.
 Note that this convergence rate is superior to the optimal $hp$-version rate
$\e^{-\sigma\sqrt[3]{DoF}}$ in the literature \cite{GuiBab86a,GuiBab86b,GuiBab86c}, where $\sigma$ may vary from case to case depending on the singularity intensity of the eigenfunctions.

It is worth pointing out that the idea of inserting singularity terms into the basis functions was used in the literature,  at the cost of
destroying sparsity of the resulting algebraic matrix system. While this approach improves the rate of convergence to some extend, depending on
how many singularity terms are introduced  \cite{Grisvard85},  it cannot reach the exponential rate  of our methods, where
we target the entire singularity, not just a few leading terms. In this way, we are able to construct
orthogonal basis functions, which leads to very sparse (and sometimes diagonal) matrices.

In this paper, we only present our numerical algorithm and demonstrate its effectiveness by comparing it with state of the art methods. Related theoretical issues will be discussed in a separate work.

\iffalse
The remainder of the paper is organized as follows.
In Section 2, we introduce the spherical harmonics  and the generalized Jacobi polynomials.
Two novel spectral methods and their implementations  for solving \eqref{eigenP} on an arbitrary ball
are  proposed in Section 3.  Numerical results are then presented  to demonstrate the effectiveness and efficiency of our methods. %  in comparison to  the low order finite element method and the classic spectral methods.
More importantly, the essential mechanism of our  spectral methods are also  revealed.
We propose in Section 4 two  efficient spectral methods for  \eqref{eigenP} on a circular sector
 in parallel to the ones on the unit ball. Numerical experiments are then carried out  for a verification.
With the help of the setups   above, we finally build up the highly efficient mortar spectral element methods
in Section 5 for solving the Schr\"{o}dinger equations with the potential singularities and reentrant/obtuse
corner singularities  on an arbitrary polygonal domain.
\fi

\section{Preliminary}

\subsection{Notation and conventions}
Let $ \Omega\subset \RR^d$ ($d\ge 1$) be a bounded domain and $ w$ be a generic weight function. Denote by $(u,v)_{  w, \Omega}=\int_{ \Omega}u(x)v(x) w dx$ and $\| \cdot \|_{ w, \Omega}$ the inner product and the norm of $L^2_{ w}( \Omega)$,  respectively. In addition, we use $H^s_{ w}( \Omega)$  and $H^s_{0, w}( \Omega)$ to denote the usual weighted Sobolev spaces, whose norms and seminorms  are denoted by $\| \cdot \|_{s,  w, \Omega}$ and  $|\cdot|_{s, w, \Omega}$, respectively.  In cases where no confusion would arise, $ w$ (if $  w\equiv1$) and  $ \Omega$ may be dropped from the notations.

Let $\NN$ and $\NN_0$  be the sets of the positive integers and  non-negative integers, respectively.
For any $k\in \NN_0$, we denote by $\PP_k( \Omega)$ the space of polynomials of total degree $\le k$ on $ \Omega$.

\subsection{Spherical Harmonics}
 Let $\CP_n^d$ denote the space of homogeneous polynomials of degree
$n$ in $d$ variables.
Harmonic polynomials of $d$-variables are polynomials in $\CP_n^d$ that satisfy the Laplace equation
$\Delta Y = 0$. Spherical harmonics are the restriction of harmonic polynomials on the unit sphere.
Let $\mathcal{H}_n^d$ denote the space of spherical harmonic polynomials of degree $n$. It is well--known
that
$$
    \dim \CP_n^d  = \binom{n+d-1}{n} \quad \hbox{and} \quad
      a_n^d: = \dim \mathcal{H}_n^d = \binom{n+d-1}{n} - \binom{n+d-3}{n-2}.
$$
If $Y \in \mathcal{H}_n^d$, then $Y(x) = r^n Y(\xi)$ in spherical--polar coordinates $x = r \xi$ with $|\xi|=1$. We call
$Y(x)$ a solid spherical harmonic. Evidently, $Y$ is uniquely determined by its restriction on the sphere.
We shall also use $\CH_n^d$ to denote the space of solid spherical harmonics.

Spherical harmonics of different degrees are orthogonal with respect to the inner product
$$
        ( f, g )_{\sph}: = \int_{\sph} f(\xi) g(\xi) d\s(\xi),
$$
where $d \s$ is the surface measure.  Further let $\{Y_\ell^n: 1 \le \ell \le a_n^d\}$ be the orthonormal (real) basis of $\CH_n^d$, $n\in \NN_0$, such that
\begin{align*}
 ( Y_\ell^n, Y_\iota^m)_{\sph}=\omega_d\delta_{n,m} \delta_{\ell,\iota},
 \quad 1\le \ell\le a_n^d, \, 1\le  \iota\le a_m^d,  \, m\ge 0, \, n\ge 0,
 \end{align*}
where $\o_{d}={2\pi^{\frac{d}{2}}}/{\Gamma(\frac{d}{2})}$ is the surface
area.

In spherical polar coordinates, the Laplace operator can be written as
\begin{equation} \label{eq:Delta}
   \Delta = \frac{d^2}{d r^2} + \frac{d-1}{r} \frac{d}{dr} + \frac{1}{r^2} \Delta_0,
\end{equation}
where $r = \|x\|$ and $\Delta_0$, the spherical part of $\Delta$, is the Laplace-Beltrami operator that has spherical
harmonics as eigenfunctions; more precisely, for $n = 0,1,2,\ldots$,
\begin{equation} \label{eq:LaplaceBeltrami}
        \Delta_0  Y = - n(n+d-2) Y, \qquad Y \in \CH_n^d.
\end{equation}

For more information regarding spherical harmonics, readers are referred to \cite{DaiXu2013,DX14}.

\subsection{Generalized Jacobi polynomials} Let $I=(-1,1)$.
The hypergeometric  representation for the classic Jacobi polynomials  $J^{\alpha_1,\alpha_2}_n(\zeta),\, \zeta\in I, n\in \NN_0$ with $\alpha_1,\alpha_2>-1$,
\begin{align}
\label{JHyper}
\begin{split}
 {J}^{\alpha_1,\alpha_2}_n(\zeta) =&\binom{n+\alpha_1}{n} {}_2F_1(-n, n+\alpha_1+\alpha_2+1;\alpha_1+1;\frac{1-\zeta}{2}),
 \quad -n-\alpha_1-\alpha_2\not \in\{ 1,2,\dots,n\}
 \end{split}
\end{align}
furnishes the extension  of $J^{\alpha_1,\alpha_2}_n(\zeta)$ to arbitrary  $\alpha_1$ and $\alpha_2$.
The restriction  $-n-\alpha_1-\alpha_2\not \in\{ 1,2,\dots,n\}$  is enforced such  that
the generalized Jacobi polynomial  $J^{\alpha_1,\alpha_2}_n(\zeta)$  is exactly of  degree $n$,
since a degree reduction occurs in \eqref{JHyper}  if and only if   $-n-\alpha_1-\alpha_2\in \{1,2,\dots,n\} $.

Denote by $\chi(x)$ a ``characteristic" function for negative integers such that $\chi(x)=-x$  if $x\in \ZZ^-$ and $\chi(x)=0$ otherwise.
 The generalized Jacobi polynomials $J^{\alpha_1,\alpha_2}_n(\zeta),\ n\ge \chi(\alpha_1)+\chi(\alpha_2)$
 defined by  \eqref{JHyper} with  $\alpha_1\in \ZZ^-$ and/or $\alpha_2\in \ZZ^-$  are exactly what were defined in \cite{LS09},  and also coincide, up to certain constants, with those defined in \cite{GSW2006}.

 For $\alpha_1,\alpha_2\in \ZZ^-\cup (-1,\infty)$,
 the generalized Jacobi polynomials $J_n^{\alpha_1,\alpha_2}, \, n \ge \chi(\alpha_1)+\chi(\alpha_2)$
 are
mutually orthogonal with respect to the weight function $w^{\alpha_1,\alpha_2}:=w^{\alpha_1,\alpha_2}
(\zeta) = (1-\zeta)^{\alpha_1} (1+\zeta)^{\alpha_2}$ on $I$ \cite{GSW2006,LS09}, i.e.,
\begin{align}
\begin{split}
\label{Jorth}
        (J_m^{\alpha_1,\alpha_2}, &J_n^{\alpha_1,\alpha_2})_{w^{\alpha_1,\alpha_2},I}= \gamma^{\alpha_1,\alpha_2}_{n}\delta_{m,n}
        \\
      := &\frac{2^{\alpha_1+\alpha_2+1}}
  {{2n+\alpha_1+\alpha_2+1}}\, \frac{\Gamma(n+\alpha_1+1)\Gamma(n+\alpha_2+1)}
  {\Gamma(n+1)\Gamma(n+\alpha_1+\alpha_2+1)}\,  \delta_{m,n}, \quad
        m,n\ge \chi(\alpha_1)+\chi(\alpha_2),
   \end{split}
\end{align}
where $\delta_{m,n}$ is the Kronecker delta. Moreover, the generalized Jacobi polynomials satisfy
the following differential recurrence  relation,
\begin{align}
\label{diff}
\partial_{\zeta}J_n^{\alpha_1,\alpha_2}(\zeta) = \frac{n+\alpha_1+\alpha_2+1}{2}J_{n-1}^{\alpha_1+1,\alpha_2+1}(\zeta),
\quad -n-\alpha_1-\alpha_2\not\in\{ 1,2,\dots,n\}.
\end{align}

 Of our great interest are those polynomials $J^{\alpha_1,\alpha_2}_n, n\in \NN_0$ with  $\alpha_1=-1$ and/or $\alpha_2=-1$.
At first, we directly obtain from \eqref{JHyper} that
\begin{align}
%\label{GJacobil}
 % &J_0^{\alpha_1,-1}(\zeta) =1, \quad  J_n^{\alpha_1,-1}(\zeta)  =  \frac{n+\alpha_1}{n}  \frac{\zeta+1}{2}   J_{n-1}^{\alpha_1, 1}(\zeta), \   n\ge 1, \quad \alpha_1>-1,
%  \\
\label{GJacobir}
  &J_0^{-1,\alpha_2}(\zeta) =1, \quad  J_n^{-1,\alpha_2}(\zeta)
  =  \frac{n+\alpha_2}{n}  \frac{\zeta-1}{2}   J_{n-1}^{1,\alpha_2}(\zeta), \   n\ge 1, \quad \alpha_2>-1.
\end{align}
Meanwhile,  we  supplement the definition of $J^{-1,-1}_1$
and then  obtain the following  complete system,
\begin{align}
\label{Jm1m1}
   &J_0^{-1,-1}(\zeta) =1,  \quad  J_1^{-1,-1}(\zeta) =\zeta,  \quad J_n^{-1,-1}(\zeta)
  =  \frac{\zeta-1}{2}  \frac{\zeta+1}{2}
   J_{n-2}^{1,1}(\zeta),\  n\ge 2.
\end{align}
Such a supplementation preserves the symmetry properties of the classic Jacobi polynomials,
\begin{align}
\label{symmJ}
J^{\alpha_1,\alpha_2}_n(\zeta) = (-1)^n J^{\alpha_2,\alpha_1}_n(-\zeta) ,
\qquad n\in \NN_0, \  \alpha_1,\alpha_2 \in [-1,\infty).
\end{align}
For  more about the supplementation of $J^{\alpha_1,\alpha_2}_n$  for $-n-\alpha_1-\alpha_2\in \{1,2,\dots,n\} $,
please refer to \cite{LX14}.

\section{Novel spectral methods on an arbitrary ball}

Throughout this section, we assume that  $\Omega=\ball:=\big\{ x\in \mathbb{R}^d: |x|<1\big\}$ and then aim at  seeking the numerical solution to \eqref{eigenP}.
It is worthy to note that the classic spectral or spectral element methods for \eqref{eigenP}
possess only  limited algebraic convergence orders as shown in \S \ref{BallNE}.
 Here we propose two novel spectral methods for \eqref{eigenP}
with an exponential rate of convergence.

\subsection{Spectral-Galerkin method I}
Denote
$$\beta_n= \beta(n,c,d)=\sqrt{ c^2+(n+d/2-1)^2}.
$$
Inspired by the  classic spectral method  on a unit disk \cite{Shen97},  we define the ball functions
\begin{align*}
Q_{k,\ell}^{\alpha,n}(x) =J^{\alpha,2\beta_n}_k(2r-1)\, r^{\beta_n+1-d/2} Y^n_{\ell}(\xi),
\quad n , k\in \mathbb{N}_0, \, 1\le \ell \le a_n^d,
\end{align*}
where $(r,\xi)$ is the  spherical-polar coordinates such that $x= r\xi$   with $\|\xi\|=1$,
 and  $J^{\alpha,\beta}_k(\zeta)$ is the generalized Jacobi polynomial of degree $k$. % with respect to the weight function $\varpi^{\alpha,\beta}(\zeta):=(1-\zeta)^{\alpha}(1+\zeta)^{\beta}$.

\begin{lemma}
\label{SOBF} Denote $Q_{k,\ell}^{n}(x) =\dfrac{2k+2\beta_n}{k+2\beta_n} Q_{k,\ell}^{-1,n}(x)$.
Then
 $Q_{k,\ell}^{n}, \, k\in \mathbb{N}_0, \,
1\le \ell \le a_n^d, \, n\in \mathbb{N}_0 $,
form a Sobolev orthogonal basis in $W^1(\ball)$. More precisely,
\begin{align}
%     a(Q_{k,\ell}^{n},Q_{j,\iota}^{m}) :=\,
\label{Qortha}
\begin{split}
      (\nabla Q_{k,\ell}^{n}, &\nabla Q_{j,\iota}^{m})_{\ball} + c^2 (Q_{k,\ell}^{n},Q_{j,\iota}^{m})_{r^{-2},\ball}
    \\
     =\, & \omega_d  \delta_{m,n} \delta_{\ell,\iota} \delta_{k,j}
\left[(2k+2\beta_n) (1-\delta_{k,0})  + (\beta_n-d/2+1) \delta_{k,0}   \right].
\end{split}
\end{align}
Moreover,
\begin{align}
\label{Qorthb}
     (Q_{k,\ell}^{n},Q_{j,\iota}^{m})_{\ball} :=\, & \omega_d\delta_{n,m}\delta_{\ell,\iota}\times
     \begin{cases}
     \frac {( k+\beta_n )  ( k^2 +2 k\beta_n + 4 \beta_n^2-1) }{ ( k+\beta_n -1)  ( k+\beta_n+1) ( 2 k+2 \beta_n-1
 )  ( 2k+2 \beta_n+1) },
  & k=j\ge 1, \\[0.3em]
  \frac{1}{2(\beta_n+1)}, & k=j=0,\\[0.3em]
  - \frac { ( 2\beta_n-1) ( 2 \beta_n+1  ) }{   ( 2 k+2 \beta_n-1  )  ( 2 k+2 \beta_n+1  ) ( 2k+2\beta_n+3  )   },
     & j = k+1,\\[0.3em]
  - {\frac {    ( k+1  )  ( k+2 \beta_n+1  ) }{2 ( k+\beta_n+1  )   ( 2 k+2 \beta_n+1  )    ( 2 k+2 \beta_n+3  ) }},
   & j = k+2,\\[0.3em]
  - \frac { ( 2\beta_n-1) ( 2 \beta_n+1  ) }{   ( 2 j+2 \beta_n-1  )  ( 2 j+2 \beta_n+1  ) ( 2j+2\beta_n+3  )   },
     & k = j+1,\\[0.3em]
  - {\frac {    ( j+1  )  ( j+2 \beta_n+1  ) }{2 ( j+\beta_n+1  )   ( 2 j+2 \beta_n+1  )    ( 2 j+2 \beta_n+3  ) }},
   & k = j+2,\\[0.3em]
   0, & \text{otherwise}.
     \end{cases}
\end{align}
\end{lemma}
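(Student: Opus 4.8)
The plan is to separate variables and push everything down to one--dimensional weighted integrals in the radial variable, where the generalized Jacobi machinery \eqref{Jorth}--\eqref{GJacobir} applies directly. First I would perform the angular reduction. Writing $x=r\xi$ and using \eqref{eq:Delta}, for $u=R(r)Y_\ell^n(\xi)$ and $v=S(r)Y_\iota^m(\xi)$ the gradient splits into a radial and a tangential part, so that integration over $\ball$ against the measure $r^{d-1}\,dr\,d\s(\xi)$, together with the orthonormality of $\{Y_\ell^n\}$ and \eqref{eq:LaplaceBeltrami} (which yields $\int_{\sph}\nabla_\xi Y_\ell^n\cdot\nabla_\xi Y_\iota^m\,d\s=n(n+d-2)\,\omega_d\,\delta_{n,m}\delta_{\ell,\iota}$), collapses both $a(\cdot,\cdot)$ and $(\cdot,\cdot)_{\ball}$ to $\omega_d\delta_{n,m}\delta_{\ell,\iota}$ times a radial integral. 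Using $n(n+d-2)+c^2=\beta_n^2-(d/2-1)^2$, the radial form governing \eqref{Qortha} becomes $\int_0^1 R'S'\,r^{d-1}\,dr+(\beta_n^2-(d/2-1)^2)\int_0^1 RS\,r^{d-3}\,dr$. This produces the Kronecker factors $\delta_{n,m}\delta_{\ell,\iota}$ in both \eqref{Qortha} and \eqref{Qorthb} and reduces the problem to two families of scalar integrals.

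For the Sobolev orthogonality \eqref{Qortha}, the decisive step is the substitution $R=r^{\mu}P$, $S=r^{\mu}\tilde P$ with $\mu=\beta_n+1-d/2$ and $P(r)=J_k^{-1,2\beta_n}(2r-1)$, exploiting $2\mu+d-2=2\beta_n$. Expanding $R'=\mu r^{\mu-1}P+r^{\mu}P'$ produces three groups of terms with radial powers $r^{2\beta_n-1},r^{2\beta_n},r^{2\beta_n+1}$; integrating the middle group $\mu r^{2\beta_n}(P\tilde P)'$ by parts and using the algebraic identity $\mu^2+\beta_n^2-(d/2-1)^2=2\mu\beta_n$, the zeroth--order contributions cancel exactly, leaving the clean expression $\mu\,P(1)\tilde P(1)+\int_0^1 r^{2\beta_n+1}P'\tilde P'\,dr$. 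I would then apply \eqref{diff} in the form $\partial_{\zeta}J_k^{-1,2\beta_n}=\tfrac{k+2\beta_n}{2}J_{k-1}^{0,2\beta_n+1}$, observe that under $\zeta=2r-1$ the measure $r^{2\beta_n+1}\,dr$ is a constant multiple of the Jacobi weight $w^{0,2\beta_n+1}$, and invoke \eqref{Jorth}. This forces $k=j$, and the diagonal value is read off from $\gamma_{k-1}^{0,2\beta_n+1}$ together with the normalizing constant $C_k=\tfrac{2k+2\beta_n}{k+2\beta_n}$, giving $2k+2\beta_n$ for $k\ge1$. The boundary term $\mu P(1)\tilde P(1)$ survives only when $k=j=0$ (since $J_k^{-1,2\beta_n}(1)=0$ for $k\ge1$ by \eqref{GJacobir}), where it yields $\beta_n-d/2+1$, while the mixed case $k\ge1,\,j=0$ vanishes because $\partial_{\zeta}J_0^{-1,2\beta_n}=0$; these reproduce the bracket in \eqref{Qortha}.

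For the mass matrix \eqref{Qorthb}, the radial reduction gives $\omega_d\delta_{n,m}\delta_{\ell,\iota}\,C_kC_j\,2^{-2\beta_n-2}\int_{-1}^1 J_k^{-1,2\beta_n}J_j^{-1,2\beta_n}\,w^{0,2\beta_n+1}\,d\zeta$, with the same weight $w^{0,2\beta_n+1}$. Since $w^{0,2\beta_n+1}=(1-\zeta^2)\,w^{-1,2\beta_n}$, I would expand the product $(1-\zeta^2)J_k^{-1,2\beta_n}$ via the three--term recurrence (equivalently, a short connection formula obtained from \eqref{GJacobir}), so that multiplication by $1-\zeta^2$ spreads each index by at most two; orthogonality \eqref{Jorth} of $\{J_k^{-1,2\beta_n}\}$ then annihilates all entries with $|k-j|>2$ and yields the pentadiagonal pattern, the explicit values coming from the recurrence coefficients and the norms $\gamma^{-1,2\beta_n}$ (the diagonal $k=j=0$ entry being just $\int_0^1 r^{2\beta_n+1}\,dr=\tfrac{1}{2(\beta_n+1)}$). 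The basis/completeness claim follows from the gradedness of $\{J_k^{-1,2\beta_n}(2r-1)\}_k$ (one polynomial of each degree, so $r^{\mu}$ times polynomials exhausts the radial factor) combined with the completeness of $\{Y_\ell^n\}$ in $L^2(\sph)$.

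I expect two main obstacles. The first is the careful handling of the low--index cases $k,j\in\{0,1\}$: the orthogonality \eqref{Jorth} is only valid for indices $\ge\chi(\alpha_1)+\chi(\alpha_2)=1$, so near the origin of the index range the supplemented definitions \eqref{GJacobir} must be applied by hand and each entry verified individually. The second, and more laborious, is the bookkeeping behind the explicit pentadiagonal entries in \eqref{Qorthb}, where correctly tracking the recurrence coefficients, the $\gamma$--normalizations, and the prefactors $C_kC_j$ is error--prone; by contrast, once the substitution $R=r^{\mu}P$ and the cancellation $\mu^2+\beta_n^2-(d/2-1)^2=2\mu\beta_n$ are in place, the Sobolev orthogonality \eqref{Qortha} is structurally transparent.
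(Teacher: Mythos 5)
Your proposal is correct, and for the angular reduction and the stiffness relation \eqref{Qortha} it is in substance identical to the paper's proof: the paper packages your substitution--plus--integration-by-parts step (with $\mu=\beta_n+1-d/2$, the cancellation being $(\mu-\beta_n)^2=(d/2-1)^2$) as the standalone identity \eqref{II0}, and then, exactly as you do, combines \eqref{diff}, the orthogonality \eqref{Jorth} of the family $J^{0,2\beta_n+1}$, and $J_k^{-1,2\beta_n}(1)=\delta_{k,0}$ from \eqref{GJacobir} to obtain the diagonal entries $2k+2\beta_n$ and the boundary contribution $\beta_n+1-d/2$ at $k=j=0$. Where you genuinely depart from the paper is the mass matrix \eqref{Qorthb}. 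The paper never factors the weight as $w^{0,2\beta_n+1}=(1-\zeta^2)\,w^{-1,2\beta_n}$; instead it invokes the parameter-raising connection formula \eqref{Jac10}, applied twice to give \eqref{Jac11}, which expresses $\frac{2k+2\beta_n}{k+2\beta_n}J_k^{-1,2\beta_n}$ through $J_k^{0,2\beta_n+1},J_{k-1}^{0,2\beta_n+1},J_{k-2}^{0,2\beta_n+1}$ --- the family orthogonal with respect to the \emph{integrable} weight $w^{0,2\beta_n+1}$ already present in the integral --- so that pentadiagonality and every entry, including the rows and columns with $k=0$ or $j=0$, drop out of \eqref{Jorth} with no exceptional cases. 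Your alternative (expanding $(1-\zeta^2)J_k^{-1,2\beta_n}$ in the $J^{-1,2\beta_n}$ family and using its own orthogonality) is workable for $k,j\ge 1$, but it is more delicate than your write-up suggests precisely at $j=0$: the weight $w^{-1,2\beta_n}$ is non-integrable at $\zeta=1$, and the pairings of $J_m^{-1,2\beta_n}$ ($m\ge1$) against $J_0^{-1,2\beta_n}=1$, while finite, are generally \emph{nonzero} (e.g.\ $\int_{-1}^{1}J_1^{-1,2\beta_n}w^{-1,2\beta_n}\,d\zeta=-\tfrac{1+2\beta_n}{2}\int_{-1}^{1}(1+\zeta)^{2\beta_n}d\zeta\neq 0$), so orthogonality alone does not annihilate the entries with $j=0$, $k\ge3$; one must instead verify $\int_0^1 J_k^{-1,2\beta_n}(2r-1)\,r^{2\beta_n+1}\,dr=0$ directly, say via \eqref{GJacobir} and the classical orthogonality of $J^{1,2\beta_n}$. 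This is exactly the exceptional bookkeeping you flag as your first obstacle, and avoiding it entirely is what the paper's route through \eqref{Jac10}--\eqref{Jac11} buys over yours. Finally, note that neither your argument nor the paper's actually proves the completeness (``basis'') assertion; both establish only the two orthogonality identities, with your density remark being heuristic.
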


The proof is postponed to Appendix \ref{AppA}.

Define the approximation space
\begin{align*}
W_{K,N}=
\mathrm{span}\big\{ Q_{k,\ell}^{n}:  1\le \ell\le a_n^d,\, 1 \le k\le K, \, 0\le n \le N \big\}.
\end{align*}
The spectral-Galerkin approximation scheme to \eqref{eigenP}  reads:
 to find $u_{K,N}\in W_{K,N}$ such that
\begin{align}
\label{GalBF}
a (u_{K,N},v)= (\nabla u_{K,N}, \nabla v)_{\ball} + c^2 (u_{K,N},v)_{r^{-2},{\ball}}
  = \lambda_{K,N} (u_{K,N},v)_{\ball}, \qquad v\in W_{K,N}.
\end{align}
Assume
$$
u_{K,N}(x) =\sum_{n=0}^N \sum_{\ell=1}^{a_n^d}   \sum_{k=1}^K \widehat u_{k,\ell}^n Q_{k,\ell}^n(x),
$$
and
denote
$$
 \widehat u =   (  \widehat u^0_{1}, \widehat u^0_{2}, \dots,\widehat u^0_{a_0^d} ,
 \widehat u^1_{1}, \widehat u^1_{2}, \dots,\widehat u^1_{a_1^d}, \dots,
 \widehat u^N_{1}, \widehat u^N_{2}, \dots,\widehat u^N_{a_N^d}     )^{\tr}
 ,
 \quad \widehat u^n_{\ell} = (\widehat u^n_{1,\ell}, \widehat u^n_{2,\ell},\dots, \widehat u^n_{K,\ell}).
$$
Then the  discrete problem  \eqref{GalBF} is equivalent to the following
algebraic eigen system
\begin{align}
\label{AlgBF}
\diag(A_{\ell}^n) \widehat u  = \lambda_{K,N}  \diag( B_{\ell}^n) \widehat u,
\end{align}
where, in view of Lemma \ref{SOBF}, the stiffness matrices $A_{\ell}^n = [a( Q^{n}_{k,\ell}, Q^m_{j,\iota})  ]_{0\le k,j \le K}$ are diagonal;
and  the mass matrices $B_{\ell}^n=[( Q^{n}_{k,\ell}, Q^m_{j,\iota})_{\ball}  ]_{0\le k,j \le K}$  are penta-diagonal.
Thus  \eqref{AlgBF} can be decoupled into a series of algebraic eigen systems,
which can be solved in parallel,
\begin{align*}
A_{\ell}^n  \widehat u^n_{\ell}  = \lambda_{K,N}^{\ell,n}   B_{\ell}^n \widehat u^n_{\ell},
\quad 1\le \ell\le a_n^d,\, 0\le n \le N .
\end{align*}

\subsection{Spectral-Galerkin method II}
Our second novel method uses basis functions imitating the ball polynomials \cite{LX14},
\begin{align*}
P_{k,\ell}^{\alpha,n}(x) =J^{\alpha,\beta_n}_k(2r^2-1)\, r^{\beta_n+1-d/2} Y^{\,n}_{\ell}(\xi),
\quad
k\in \mathbb{N}_0,\,  1\le \ell\le a_n^d,\,  n\in \NN_0.
\end{align*}
In particular, each $P_{k,\ell}^{\alpha,n}(x)$ is
reduced to the ball polynomials $J^{\alpha,n+d/2-1}_k(2r^2-1)\, Y^{\,n}_{\ell}(x)$
in \cite{LX14} whenever $c=0$.

\begin{lemma}
\label{SOBP} Denote $P_{k,\ell}^{n}(x) =\dfrac{2k+\beta_n}{k+\beta_n} P_{k,\ell}^{-1,n}(x)$.
Then
 $P_{k,\ell}^{n}, \, k\in \mathbb{N}_0, \,
1\le \ell \le a_n^d, \, n\in \mathbb{N}_0 $,
form a Sobolev orthogonal basis in $W^1(\ball)$. More precisely,
\begin{align}
\label{Portha}
\begin{split}
      (\nabla &P_{k,\ell}^{n}, \nabla P_{j,\iota}^{m})_{\ball} + c^2 (P_{k,\ell}^{n},P_{j,\iota}^{m})_{r^{-2},{\ball}}
    \\
     =\, & \omega_d  \delta_{m,n} \delta_{\ell,\iota} \delta_{k,j}
\left[2(2k+\beta_n) (1-\delta_{k,0})  + (\beta_n-d/2+1) \delta_{k,0}   \right].
\end{split}
\end{align}
Moreover,
\begin{align}
\label{Porthb}
     (P_{k,\ell}^{n},P_{j,\iota}^{m})_{\ball} :=\, & \omega_d\delta_{n,m}\delta_{\ell,\iota}\times
     \begin{cases}
 \frac {1}{ 2 k+\beta_n+1 } +  \frac {1-\delta_{k,0}}{ 2 k+\beta_n-1 }  ,
  & k=j, \\
  - \frac{1}{ 2(2k+\beta_n+1) },
     & j = k+1,\\
  - \frac{1}{2( 2j+\beta_n+1)},
     & k = j+1,\\
   0, & \text{otherwise}.
     \end{cases}
\end{align}
\end{lemma}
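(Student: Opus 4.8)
The plan is to reduce both bilinear forms to one–dimensional radial integrals and then to invoke the generalized Jacobi orthogonality \eqref{Jorth} together with the differential recurrence \eqref{diff}, exactly paralleling the (postponed) proof of Lemma \ref{SOBF}, the only differences being the quadratic argument $2r^2-1$ in place of $2r-1$ and the exponent $\beta_n$ in place of $2\beta_n$. First I would use the orthonormality of the spherical harmonics $\{Y^n_\ell\}$ on $\sph$ and the Laplace--Beltrami eigenrelation \eqref{eq:LaplaceBeltrami} (via Green's identity $\int_{\sph}\nabla_0 Y\cdot\nabla_0 Y'\,d\sigma = -\int_{\sph}Y\Delta_0 Y'\,d\sigma$) to collapse the angular integration in the spherical–polar splitting of the gradient corresponding to \eqref{eq:Delta}; every inner product then acquires the factor $\omega_d\delta_{m,n}\delta_{\ell,\iota}$. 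Writing $P^{n}_{k,\ell}=f_k(r)Y^n_\ell(\xi)$ and using the algebraic identity $c^2+n(n+d-2)=\beta_n^2-(d/2-1)^2$, the form $a(\cdot,\cdot)$ reduces to $\int_0^1\big[f_k'f_j'+(\beta_n^2-(d/2-1)^2)f_kf_j/r^2\big]r^{d-1}\,dr$ and the $L^2$ product to $\int_0^1 f_kf_j\,r^{d-1}\,dr$.

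For the stiffness relation \eqref{Portha} the crux is a single integration by parts. Setting $f_k=g_k r^{\mu}$ with $\mu=\beta_n+1-d/2$ and $g_k(r)=\frac{2k+\beta_n}{k+\beta_n}J_k^{-1,\beta_n}(2r^2-1)$, expanding $f_k'$ and using the identity $\mu^2+\beta_n^2-(d/2-1)^2=2\beta_n\mu$, the radial integrand becomes $g_k'g_j'\,r^{2\beta_n+1}+\mu(g_kg_j)'\,r^{2\beta_n}+2\beta_n\mu\,g_kg_j\,r^{2\beta_n-1}$; integrating the middle total-derivative term by parts cancels the last term exactly, leaving $\int_0^1 g_k'g_j'\,r^{2\beta_n+1}\,dr$ plus the boundary contribution $\mu\,[g_kg_j\,r^{2\beta_n}]_0^1$. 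The choice $\alpha=-1$ forces $g_k(1)=0$ for $k\ge1$ by \eqref{GJacobir}, so the boundary term survives only for $k=j=0$ and produces the entry $(\beta_n-d/2+1)\delta_{k,0}$. The substitution $\zeta=2r^2-1$ together with \eqref{diff}, namely $\partial_\zeta J_k^{-1,\beta_n}=\tfrac{k+\beta_n}{2}J_{k-1}^{0,\beta_n+1}$, turns the remaining integral into $\int_{-1}^1 J_{k-1}^{0,\beta_n+1}J_{j-1}^{0,\beta_n+1}\,w^{0,\beta_n+1}\,d\zeta$ up to an explicit constant: the weight induced by the radial measure matches precisely the orthogonality weight of the derivative family, which is exactly why $\alpha=-1$ is the correct shift. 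Then \eqref{Jorth} gives diagonality, and inserting $\gamma_{k-1}^{0,\beta_n+1}=2^{\beta_n+2}/(2k+\beta_n)$ yields the entry $2(2k+\beta_n)$.

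For the mass relation \eqref{Porthb} the same substitution gives $\int_{-1}^1 J_k^{-1,\beta_n}J_j^{-1,\beta_n}\,w^{0,\beta_n}\,d\zeta$ up to a constant. Since the $J_k^{-1,\beta_n}$ are orthogonal with respect to $w^{-1,\beta_n}$ rather than $w^{0,\beta_n}$, this is not diagonal; I would derive the two-term connection formula expressing $J_k^{-1,\beta_n}$ through $J_k^{0,\beta_n}$ and $J_{k-1}^{0,\beta_n}$ (raising $\alpha_1$ from $-1$ to $0$ mixes only two consecutive degrees), substitute it, and apply the $w^{0,\beta_n}$-orthogonality of the $J^{0,\beta_n}$ family. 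Only the diagonal, super- and sub-diagonal terms survive, which produces the tridiagonal structure; collecting the connection coefficients with the normalizations $\gamma^{0,\beta_n}$ reproduces the explicit rational entries of \eqref{Porthb}. Finally, that the $P^n_{k,\ell}$ form a \emph{basis} of $W^1(\ball)$, and not merely an orthogonal set, follows from the density of the generalized Jacobi polynomials in the corresponding weighted $L^2$ space tensored with the completeness of $\{Y^n_\ell\}$ on $\sph$.

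I expect the mass-matrix computation to be the main obstacle: deriving the connection coefficients between the $\alpha_1=-1$ and $\alpha_1=0$ Jacobi families and then simplifying the resulting products of Gamma-function ratios into the compact entries of \eqref{Porthb} is the only genuinely laborious part. The stiffness relation, by contrast, rests on the two clean observations that the integration by parts cancels the cross term and that $\alpha=-1$ aligns the radial measure with the orthogonality weight of the differentiated family.
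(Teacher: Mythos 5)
Your proposal is correct and follows essentially the same route as the paper: your integration-by-parts step (with the cancellation $\mu^2+\beta_n^2-(d/2-1)^2=2\beta_n\mu$) is exactly the paper's identity \eqref{II0}, your use of \eqref{diff} and \eqref{Jorth} after the substitution $\zeta=2r^2-1$ reproduces the paper's derivation of \eqref{Portha}, and the two-term connection formula you propose for the mass matrix is precisely the paper's \eqref{Jac10}, which it likewise combines with \eqref{Jorth} to obtain \eqref{Porthb}. The only difference is presentational: the paper cites \eqref{Jac10} from the literature rather than deriving it, so the step you flag as laborious is dispatched by reference.
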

The proof of the above lemma is postponed to Appendix \ref{AppA}.

Define the
approximation space
\begin{align*}
V_{K,N}=
\big\{ P_{k,\ell}^{n}:  1\le \ell\le a_n^d,\, 1 \le k\le K,\,  0\le n \le N \big\}.
\end{align*}
Then
approximation scheme for \eqref{eigenP} reads, to find $u_{K,N}\in V_{K,N}$ such that
\begin{align}
\label{GalBP}
a (u_{K,N}, v)= (\nabla u_{K,N}, \nabla v)_{\ball} + c^2 (u_{K,N}, v)_{r^{-2},{\ball}} = \lambda_{K,N} (u_{K,N}, v)_{\ball},
 \qquad v\in V_{K,N}.
\end{align}
\iffalse
Let $\bar N =\lfloor \frac{N-1}{2} \rfloor$ and  assume
$$
u_{N}(x) = \sum_{n=0}^{\bar N} \sum_{\ell=1}^{a_n^d}  \sum_{k=1}^{ N-2n}  \widehat u_{k,\ell}^n
  P_{k,\ell}^n(x),
$$
and
denote
$$
 \widehat u =   (  \widehat u^0_{1}, \widehat u^0_{2}, \dots,\widehat u^0_{a_0^d} ,
 \widehat u^1_{1}, \widehat u^1_{2}, \dots,\widehat u^1_{a_1^d}, \dots,
 \widehat u^{\bar N}_{1}, \widehat u^{\bar N}_{2}, \dots,\widehat u^{\bar N}_{a_{\bar N}^d}     )^{\tr}
 ,
 \quad \widehat u^n_{\ell} = (\widehat u^n_{1,\ell}, \widehat u^n_{2,\ell},\dots, \widehat u^n_{N-2n,\ell}).
$$
\fi
Assume
$$
u_{K,N}(x) =\sum_{n=0}^N \sum_{\ell=1}^{a_n^d} \sum_{k=1}^K  \widehat u_{k,\ell}^n P_{k,\ell}^n(x).
$$
Then the  discrete problem  \eqref{GalBP} is equivalent to the following
algebraic eigen system,
\begin{align}
\label{AlgBP}
\diag(A_{\ell}^n) \widehat u  = \lambda_{K,N}  \diag( B_{\ell}^n) \widehat u.
\end{align}
In light of Lemma \ref{SOBP},  the stiffness matrices $A_{\ell}^n$ are diagonal;
and  the mass matrices $B_{\ell}^n$  are tridiagonal.
Thus the \eqref{AlgBP} can be also decoupled into a series of algebraic eigen systems,
which can be solved independently,
\begin{align*}
A_{\ell}^n  \widehat u^n_{\ell}  = \lambda_{K,N}^{\ell,n}   B_{\ell}^n \widehat u^n_{\ell},
\quad 1\le \ell\le a_n^d,\, 0\le n \le  N .
\end{align*}

\subsection{Numerical experiments}
\label{BallNE}
We now present some numerical results
using Sobolev-orthogonal basis functions to  Schr\"odinger equations
on the unit ball $\ball$  to demonstrate effectiveness of our proposed methods.
To make a comparison, we shall also show
numerical results by an adaptive finite element method  with graded meshes \cite{LO15}
and those by classic spectral methods on the disk/ball \cite{Shen97, LX14}.

We first note that the finite element method (FEM) has a low accuracy
and thus does not fit well for solving the Schr\"{o}dinger equation \eqref{eigenP}, even if
variants of adaptive techniques are applied.
We excerpt  from \cite{LO15} the errors of the adaptive FEM  with various degrees of freedom (DoF)  in Table
\ref{tab:FEM},  which verifies  our observation.

A  heuristic spectral method  inspired by  \cite{Shen97} utilizes the technique of  separation of variables by assuming   the  eigenfunction  $u=\wh u^n_{\ell}(r)  Y^n_{\ell}(\xi) $.  As a result, \eqref{eigenP}
is transformed into a singular equation in  $r$
as indicated
in \eqref{BeigenS} in the subsequent subsection.
Then  one
adopts  the following generalized Jacobi polynomials as basis functions
to solve  the reduced 1-D equation,
\begin{align*}
&J^{-1,d-3}_k(2r-1) , \qquad  2\le k\le K \ \text{ if }\  c^2+n^2\neq 0 , d=2\ \text{ and }\
1 \le k \le K \text{ if otherwise}.
\end{align*}
This scheme leads to an algebraic eigen system with a  tri-diagonal stiffness matrix and
a  penta-diagonal mass matrix \cite{Shen97}.

\iffalse
\begin{align*}
\wh u^n_{\ell}(0) = 0\  \text{ if } \  c^2+n^2\neq 0 \text{ and } d=2,     \qquad    \wh u^n_{\ell}(1) = 0.
\end{align*}
Its approximation scheme is based the following variational form
\begin{align*}
  (\partial_r \wh u^n_{\ell}, \partial_r v)_{r^{d-1},I}  +
 [c^2+n(n+d-2)]  (\wh u^n_{\ell}, v)_{r^{d-3},I}  =\lambda (\wh u^n_{\ell}, v)_{r^{d-1},I} ,
\end{align*}
and  its basis functions are the following generalized Jacobi polynomials,
\begin{align*}
&J^{-1,d-3}_k(2r-1) , \qquad  2\le k\le K \ \text{ if }\  c^2+n^2\neq 0 , d=2\ \text{ and }\
1 \le k \le K \text{ if otherwise}.
\end{align*}
This scheme leads to an algebraic eigen system with a  tri-diagonal stiffness matrix and
a  penta-diagonal mass matrix.
\fi

 Figures \ref{fig:errSMdisk} and \ref{fig:errSMball} depict the convergence behaviours of this spectral method for the 4 smallest Schr\"{o}dinger eigenvalues  for $c=1/2$ and $c=2/3$  on the unit disk and the unit ball.
 Without a mechanism to capture the singularity of eigenfunctions
 induced by the singular potential $r^{-2}$, this method has only a limited convergence rate
 instead of a spectrally high rate.
 It is observed specifically that the computational eigenvalues converge at an algebraic rate  $\mathcal{O}(K^{-4\beta_n})$, where $n$ is the degree of the spherical component of the eigenfunction. In particular, the eigenvalues corresponding to $n=0$ have the lowest  convergence rate  and poor accuracy even for very large $K$.

\begin{table}[h!]
\caption{Approximation  errors of the first, second, and sixth  Schr\"{o}dinger eigenvalues with $c=1/2$ on the unit disk
by the  finite element method  with $L$ levels of graded meshes in \cite{LO15}.
% with  various degrees  of freedom (DoF).
}
\hspace*{\fill}
\begin{tabular}{c||c|c|c|c|c|c|c}
$L$   & 0      & 1 &   2 & 3 & 4 & 5 & 6
 \\ \hline
DoF  & 48   & 224 & 961 & 3968 & 16129 & 65025 & 261121
\\ \hline
$\lambda_{1}$ & 9.467e-1 & 2.429e-1 & 5.690e-2 & 1.631e-2 & 3.957e-3 & 1.026e-3 & 2.637e-4
\\ \hline
$\lambda_{2}$ & 2.371 & 5.769e-1 & 1.433e-2 & 3.576e-2 & 8.938e-3 & 2.234e-3 & 5.586e-4
\\ \hline
$\lambda_{6}$ & --  & 3.892 &9.629e-1 & 2.493e-1 & 5.898e-2 & 1.510e-2 & 3.844e-3
\end{tabular}\hspace*{\fill}
\vspace*{0.5em}
\label{tab:FEM}
\end{table}

\begin{figure}[h!]
\hspace*{\fill}
\begin{minipage}[h]{0.35\textwidth}
\includegraphics[width=1\textwidth]{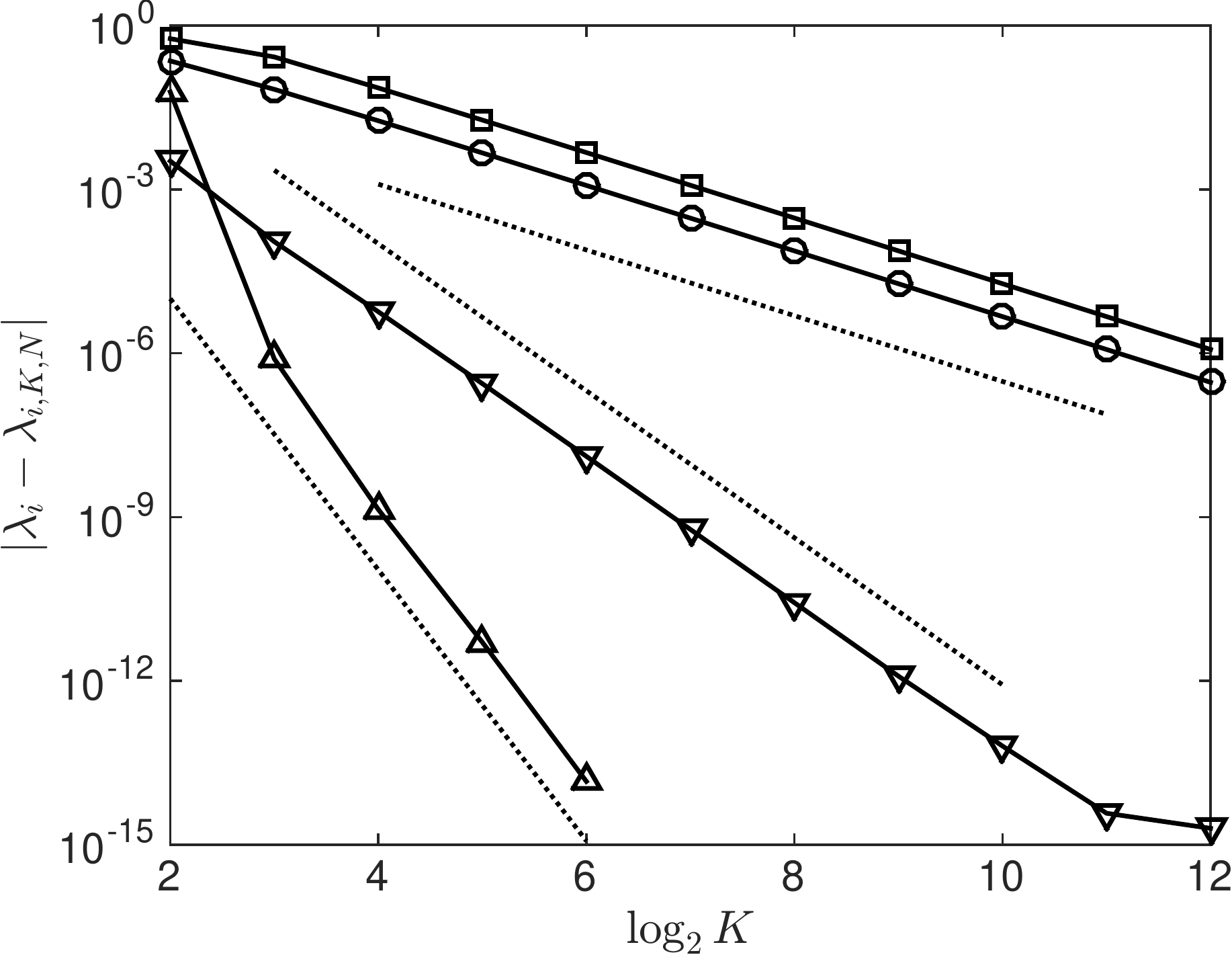}
\hspace*{\fill} (a). $c=1/2$. \hspace*{\fill}
\end{minipage}\hspace*{\fill}%
\begin{minipage}[h]{0.35\textwidth}
\includegraphics[width=1\textwidth]{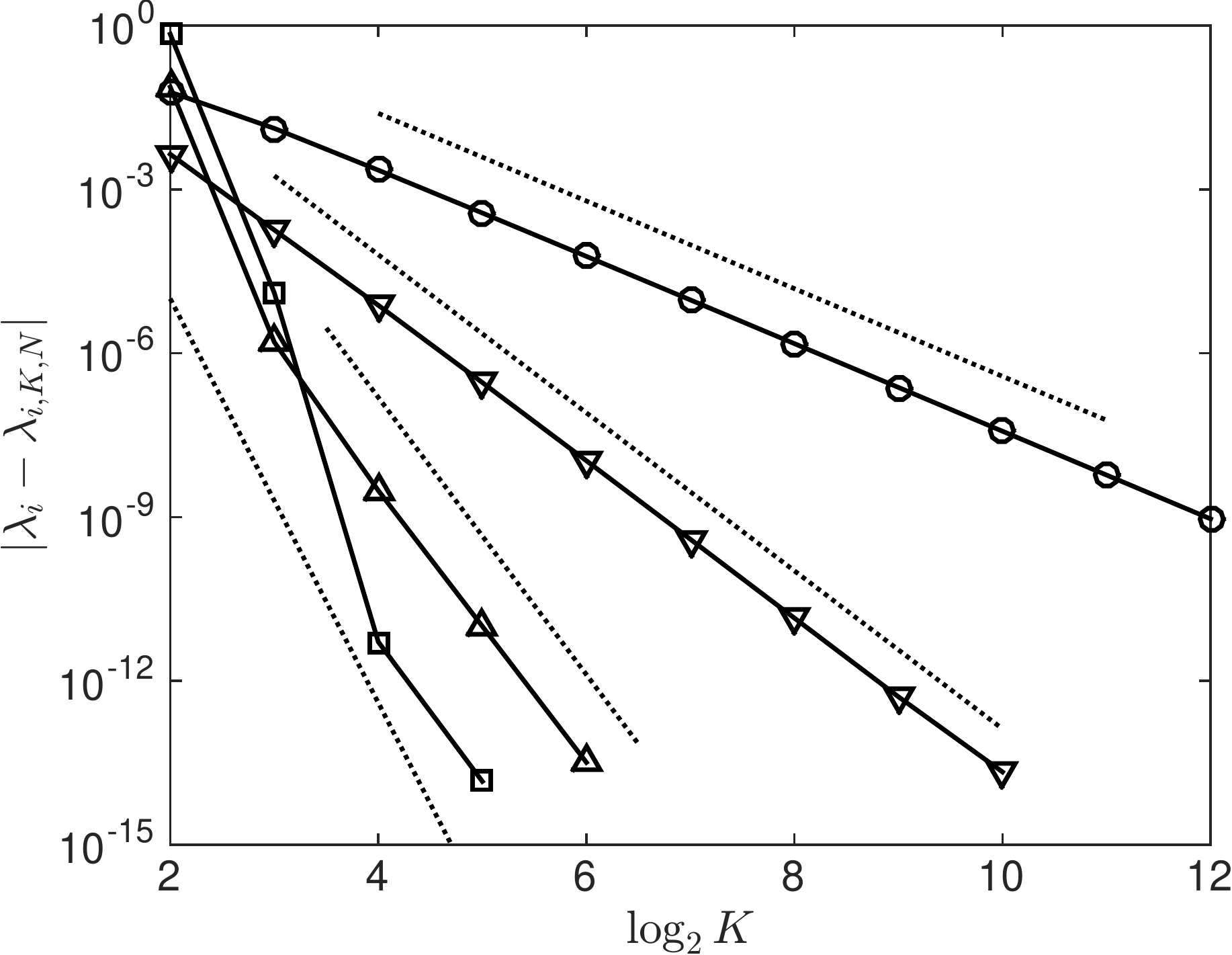}
\hspace*{\fill} (b). $c=2/3$.  \hspace*{\fill}
\end{minipage}\hspace*{\fill}
\caption{Approximation errors $|\lambda_i-\lambda_{i,K,N}|$  versus $K$ by the classic spectral method   inspired by  \cite{Shen97} on the unit disk.
 $\circ: \lambda_1$ ($n=0$);  $\triangledown:\lambda_2=\lambda_3$ ($n=1$);  $\vartriangle:\lambda_4=\lambda_5$ ($n=2$); $\square: \lambda_6$ ($n=0$)/$\lambda_6=\lambda_7$ ($n=3$).
Dashed lines:  $y = \sigma K^{-4\beta_n}$.}
\label{fig:errSMdisk}
\end{figure}
\begin{figure}[h!]
\hspace*{\fill}
\begin{minipage}[h]{0.35\textwidth}
\includegraphics[width=1\textwidth]{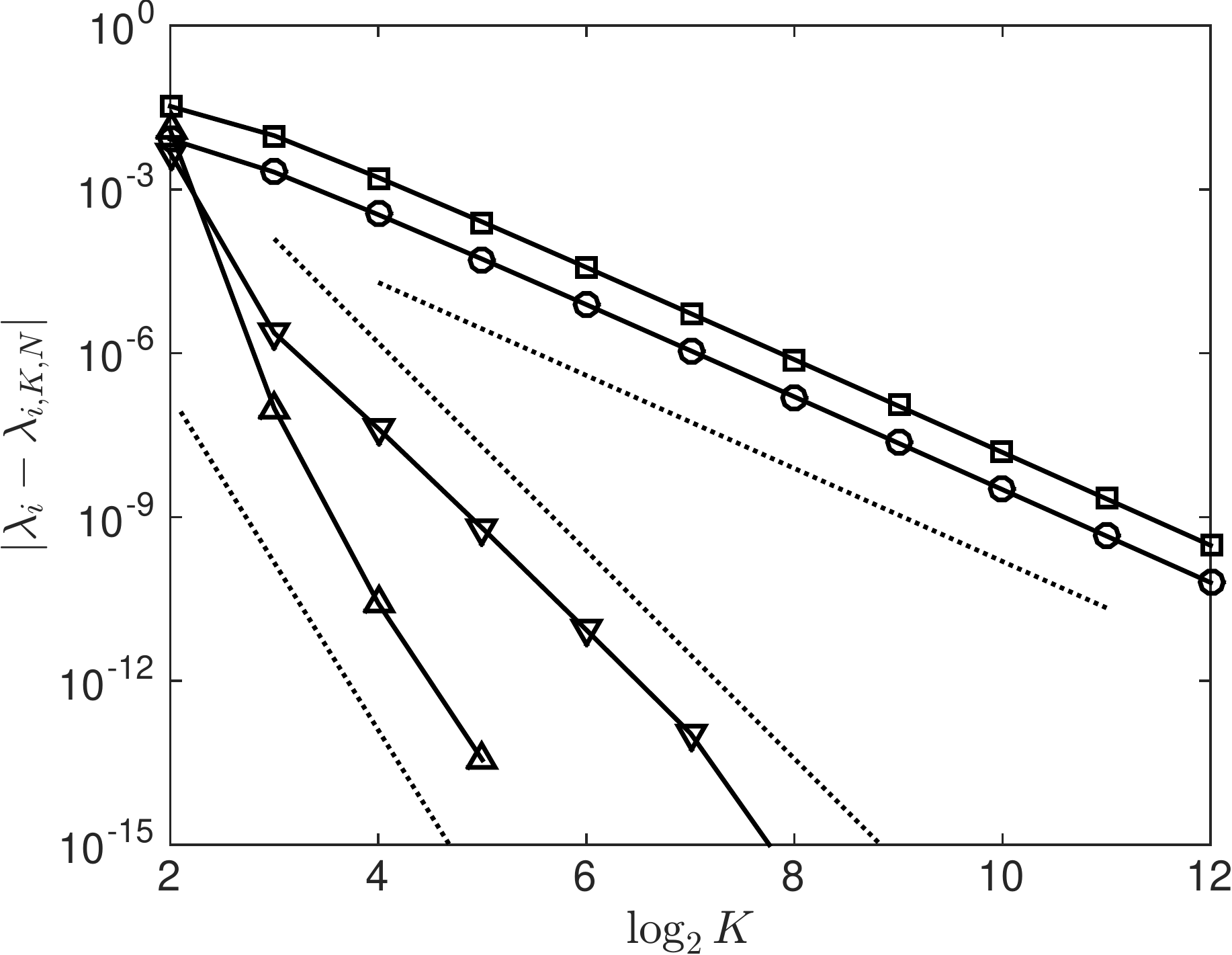}
\hspace*{\fill} (a). $c=1/2$. \hspace*{\fill}
\end{minipage}\hspace*{\fill}%
\begin{minipage}[h]{0.35\textwidth}
\includegraphics[width=1\textwidth]{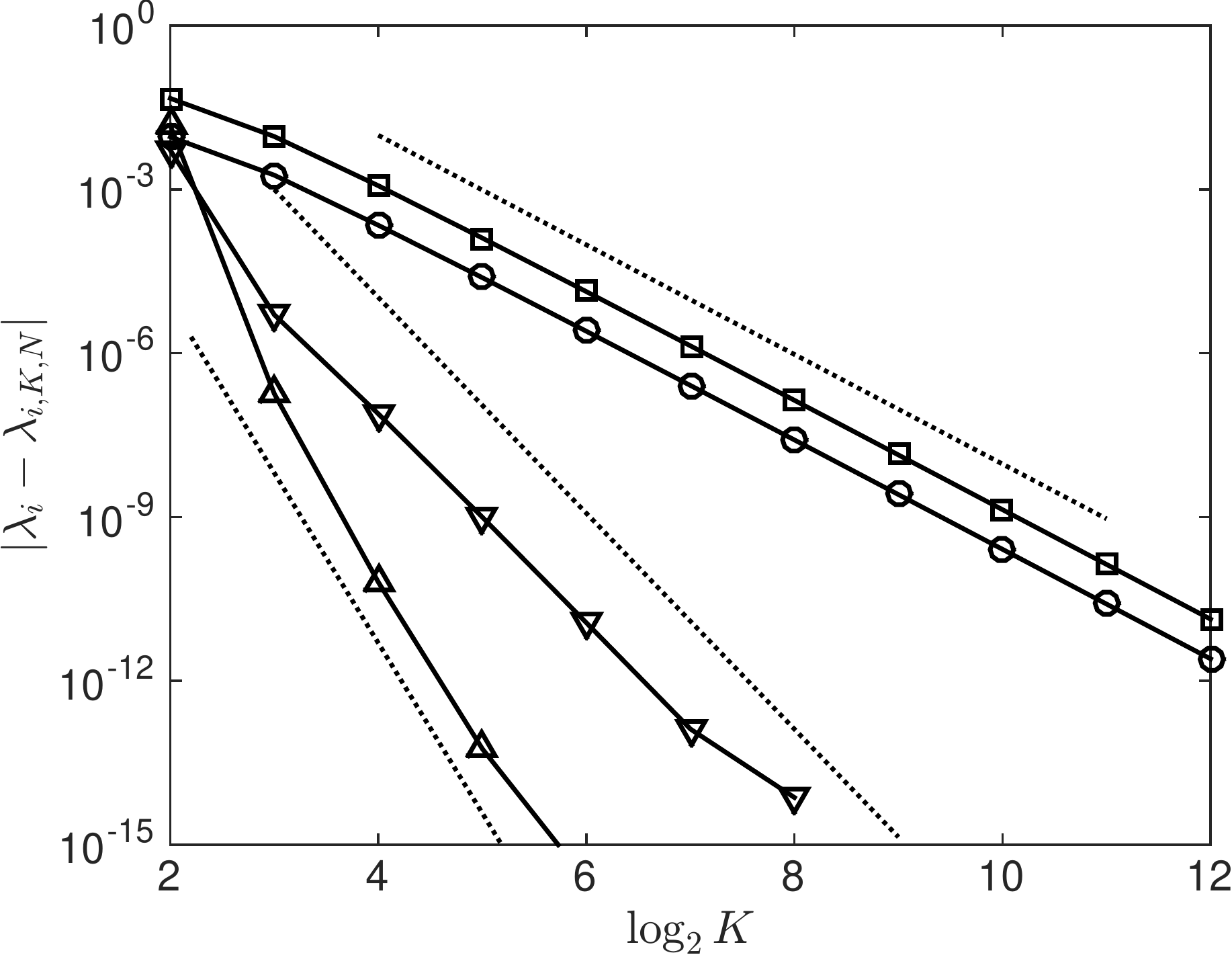}
\hspace*{\fill} (b). $c=2/3$.  \hspace*{\fill}
\end{minipage}\hspace*{\fill}
\caption{Approximation errors $|\lambda_i-\lambda_{i,K,N}|$  versus $K$ by the classic spectral method  inspired by  \cite{Shen97} on the unit ball.
 $\circ: \lambda_1$ ($n=0$),  $\triangledown:\lambda_2=\lambda_3=\lambda_4$ ($n=1$), $\vartriangle:\lambda_5=\dots=\lambda_9$ ($n=2$), $\square:  \lambda_{10} $ ($n=3$). Dashed lines:  $y = \sigma K^{-4\beta_n}$.}
\label{fig:errSMball}
\end{figure}

The polynomial spectral method  \cite{LX14}  for \eqref{eigenP} utilizes
the orthogonal ball polynomials as basis functions,
\begin{align*}
J^{-1,n+d/2-2}_k&(2r^2-1) Y^n_{\ell}(x) ,
\\
& 2\le k\le K \ \text{ if }\  c^2+n^2\neq 0 , d=2\ \text{ and }\
1 \le k \le K \text{ if otherwise}.
\end{align*}
Once again, this method leads to  a series of  independent algebraic eigenvalue problems with the tri-diagonal stiffness matrix and
 the penta-diagonal mass matrix.

The approximation errors  of the polynomial spectral method are plotted in Figures \ref{fig:errPSMdisk} and \ref{fig:errPSMball} in log-log scale for both $c=1/2$ and $c=2/3$ in $d=2,3$ dimensions.
We clearly see that the polynomial spectral method converges at a rate of $\mathcal{O}(K^{-2\beta_n})$,
which is only the half order of the classic spectral method inspired by \cite{Shen97}.
This even worse accuracy and convergence rate  confirm
the singularity of type  $r^{\rho}$ of the Schr\"{o}dinger eigenfunctions, which will be specified in \S \ref{Why}.

\iffalse
\begin{table}[h!]
\begin{minipage}[t]{0.52\textwidth}
\hspace*{\fill}
\begin{tabular}{c||c|c||c|c}
 No.       &    Mul.  &  \parbox{5em}{ $c=1/2$\\   $K = 1000$}    &  Mul.  &   \parbox{5em}{ $c=2/3$\\   $K = 1000$}
       \\[0.5em] \hline
 1& 1 & 2.1613e-02     & 1 & 3.8453e-03    \\ \hline
 2 & 2 & 3.4013e-07    & 2 & 3.8812e-07    \\ \hline
 3 & 2 & 3.7840e-12   & 2 & 2.0567e-11    \\ \hline
 4  & 1 & 8.6452e-02    & 2 & 1.3941e-11     \\ \hline
 5 & 2 & 9.8130e-12    & 1 & 1.7826e-02
\end{tabular}
\end{minipage}%
\hspace*{\fill}%
\begin{minipage}[t]{0.4\textwidth}
\hspace*{\fill}
\begin{tabular}{c|c||c|c}
  Mul. & \parbox{5em}{$c=1/2$  \\ $K = 1000$}  &    Mul. &   \parbox{5em}{$c=2/3$  \\ $K = 1000$}  \\[0.5em] \hline
  1 & 1.2433e-04  & 1 & 7.3267e-05   \\ \hline
  3 & 1.2423e-09 &  3 & 1.8883e-09   \\ \hline
  5 & 2.3533e-11  & 5 & 4.2633e-11   \\ \hline
  1 & 5.9656e-04  & 1 & 3.8951e-04   \\ \hline
  7 & 4.2564e-12 &  7 & 3.2900e-12
\end{tabular}
\end{minipage}
\hspace*{\fill}
\vspace*{0.5em}
\caption{Approximation  errors of the first 5 Schr\"{o}dinger  eigenvalues
by the polynomial spectral method  on the unit disk (left) and the unit ball (right).
}
\label{tab:CSL23}
\end{table}
\fi

\begin{figure}[h!]
\hspace*{\fill}
\begin{minipage}[h]{0.35\textwidth}
\includegraphics[width=1\textwidth]{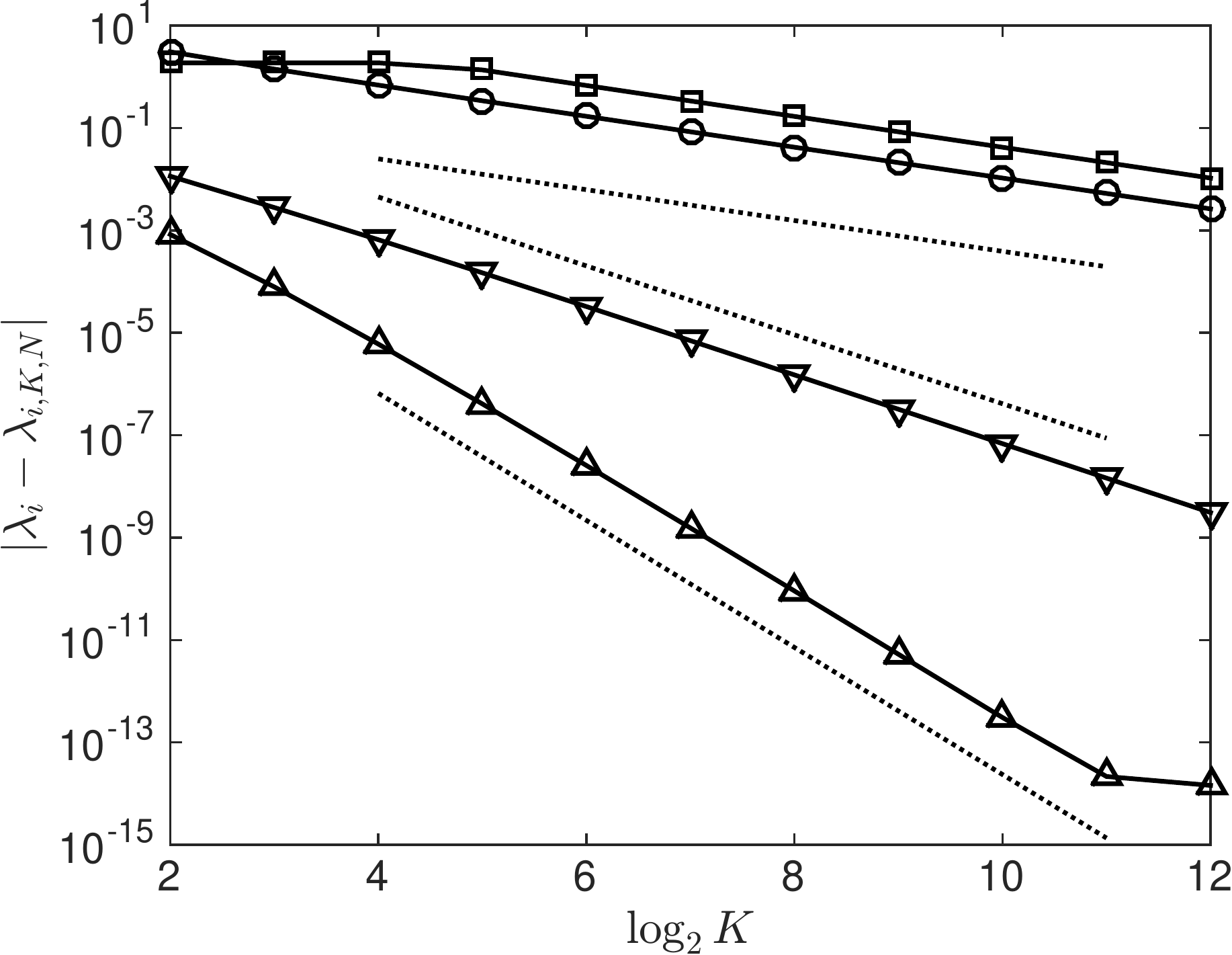}
\hspace*{\fill} (a). $c=1/2$. \hspace*{\fill}
\end{minipage}\hspace*{\fill}%
\begin{minipage}[h]{0.35\textwidth}
\includegraphics[width=1\textwidth]{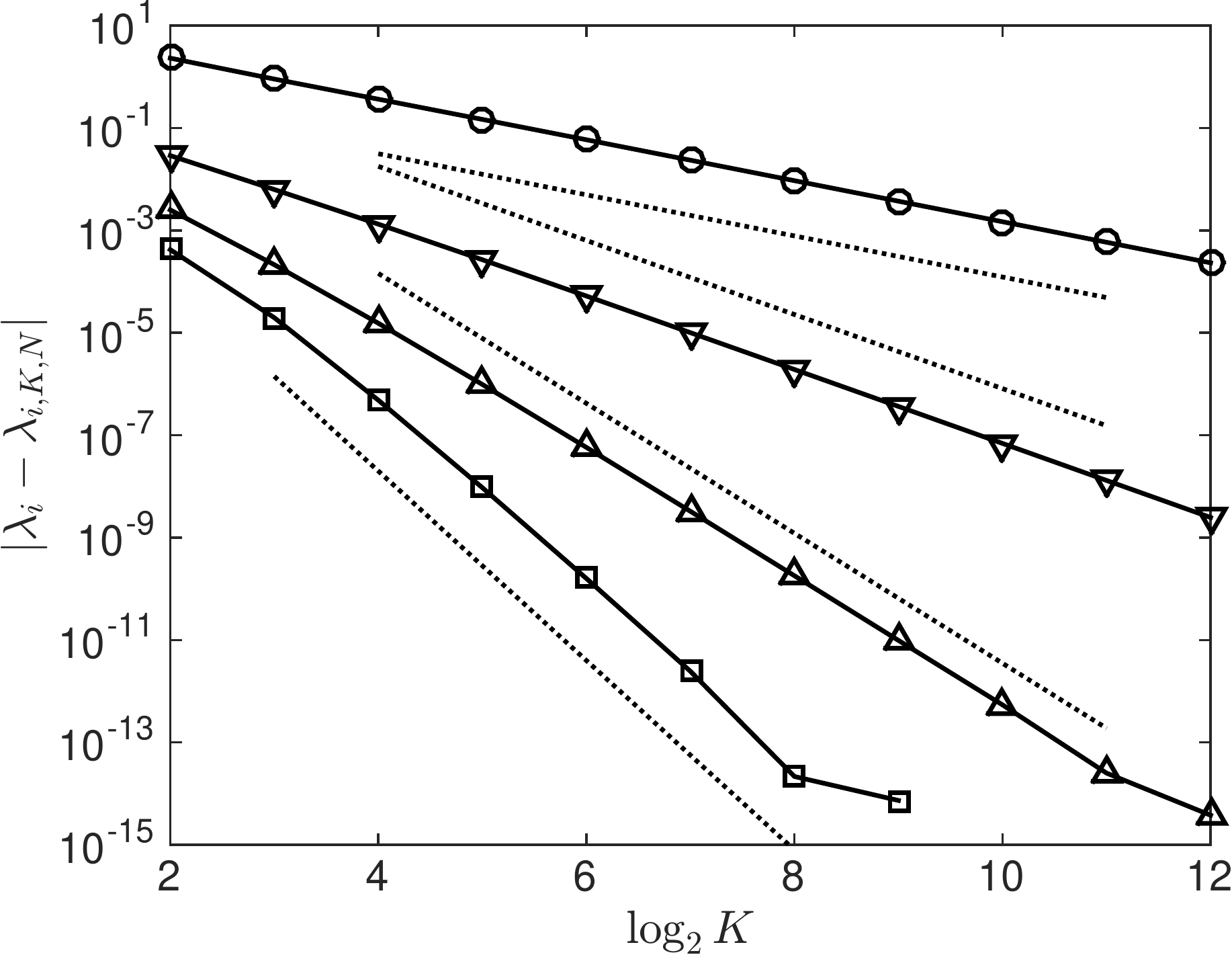}
\hspace*{\fill} (b). $c=2/3$.  \hspace*{\fill}
\end{minipage}\hspace*{\fill}
\caption{Approximation errors $|\lambda_i-\lambda_{i,K,N}|$  versus $K$ by the polynomial spectral method on the unit disk.
 $\circ: \lambda_1$ ($n=0$);  $\triangledown:\lambda_2=\lambda_3$  ($n=1$);  $\vartriangle:\lambda_4=\lambda_5$ ($n=2$); $\square: \lambda_6$  ($n=0$)/$\lambda_6=\lambda_7$ ($n=3$).
Dashed lines:  $y = \sigma K^{-2\beta_n}$.}
\label{fig:errPSMdisk}
\end{figure}
\begin{figure}[h!]
\hspace*{\fill}
\begin{minipage}[h]{0.35\textwidth}
\includegraphics[width=1\textwidth]{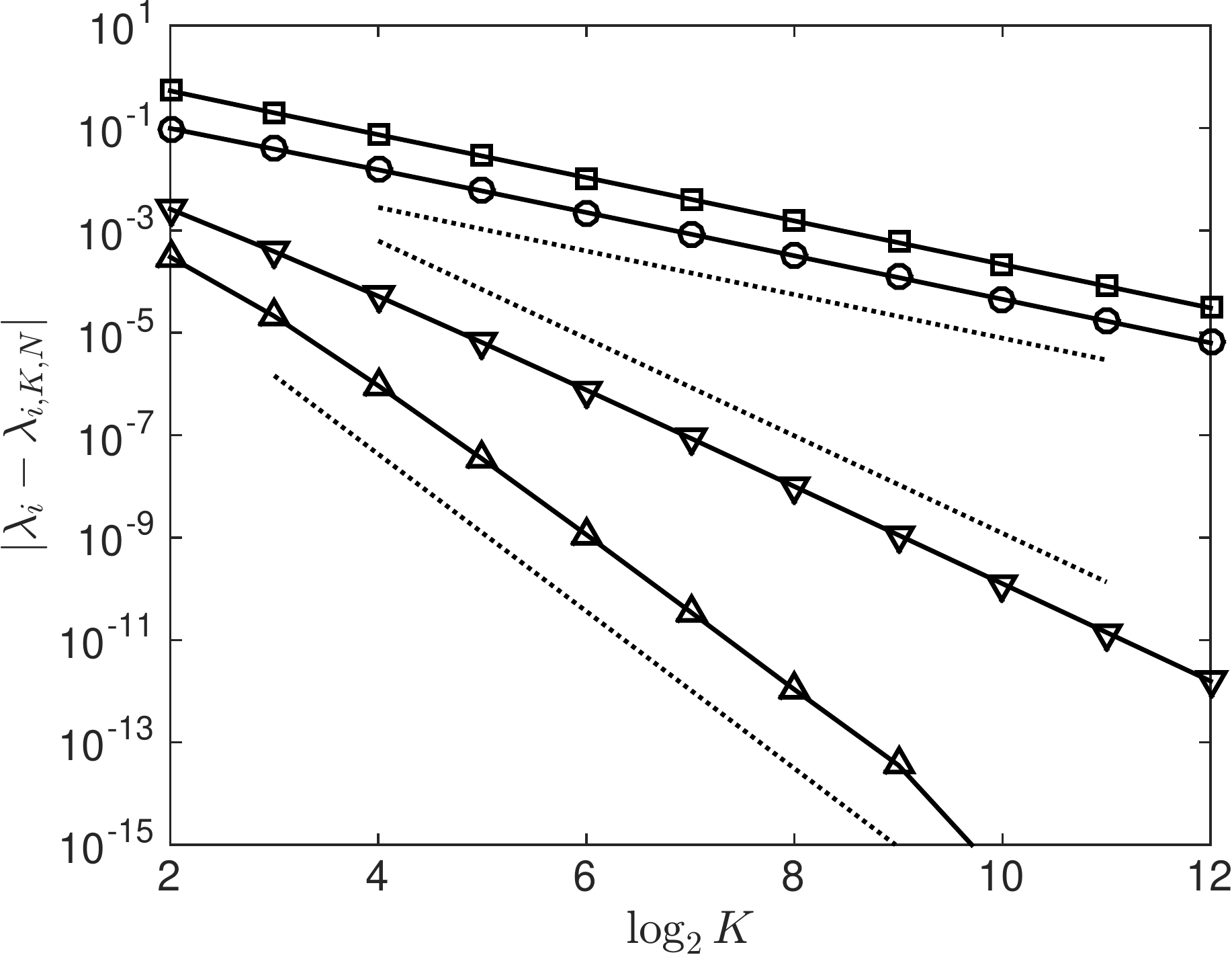}
\hspace*{\fill} (a). $c=1/2$. \hspace*{\fill}
\end{minipage}\hspace*{\fill}%
\begin{minipage}[h]{0.35\textwidth}
\includegraphics[width=1\textwidth]{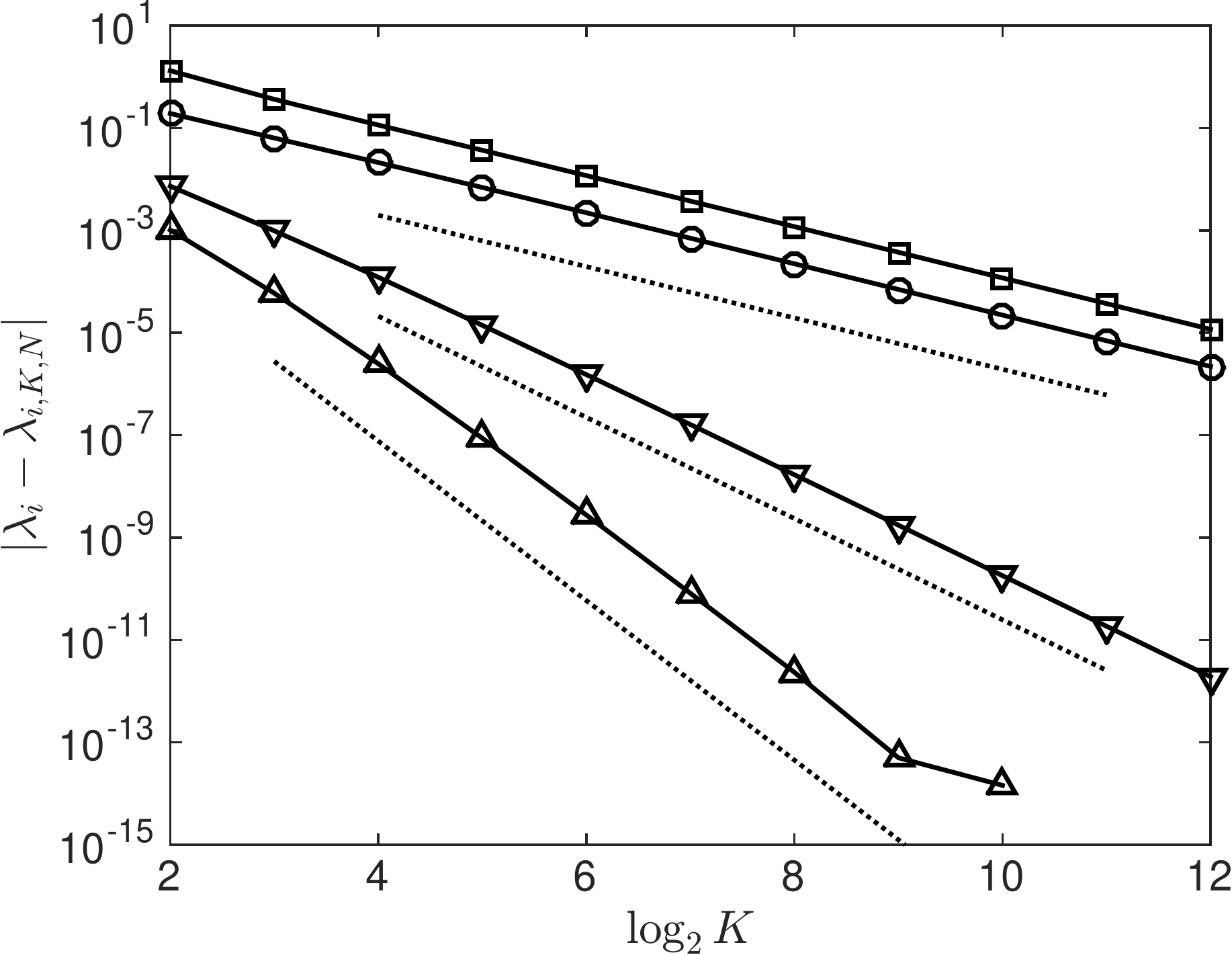}
\hspace*{\fill} (b). $c=2/3$.  \hspace*{\fill}
\end{minipage}\hspace*{\fill}
\caption{Approximation errors $|\lambda_i-\lambda_{i,K,N}|$  versus $K$ by the polynomial spectral method  on the unit ball.
 $\circ: \lambda_1$ ($n=0$),  $\triangledown:\lambda_2=\lambda_3=\lambda_4$  ($n=1$), $\vartriangle:\lambda_5=\dots=\lambda_9$  ($n=2$), $\square:  \lambda_{10} $  ($n=3$). Dashed lines:  $y = \sigma K^{-2\beta_n}$.}
\label{fig:errPSMball}
\end{figure}

On the contrary, exponential convergence rates of our novel spectral methods are readily observed
from Figures \ref{fig:SMdisk} and \ref{fig:SMball}. These results demonstrate  the effectiveness of
Method I and Method II. Interestingly, the convergence order of Method II  is  roughly
 twice as high as Method I. %As the  Note that the  ratio of the convergence orders of  classic spectral method by \cite{Shen97}  to the spectral method using ball polynomials \cite{LX14}.

\begin{figure}[h!]
\hspace*{\fill}
\begin{minipage}[h]{0.35\textwidth}
\includegraphics[width=1\textwidth]{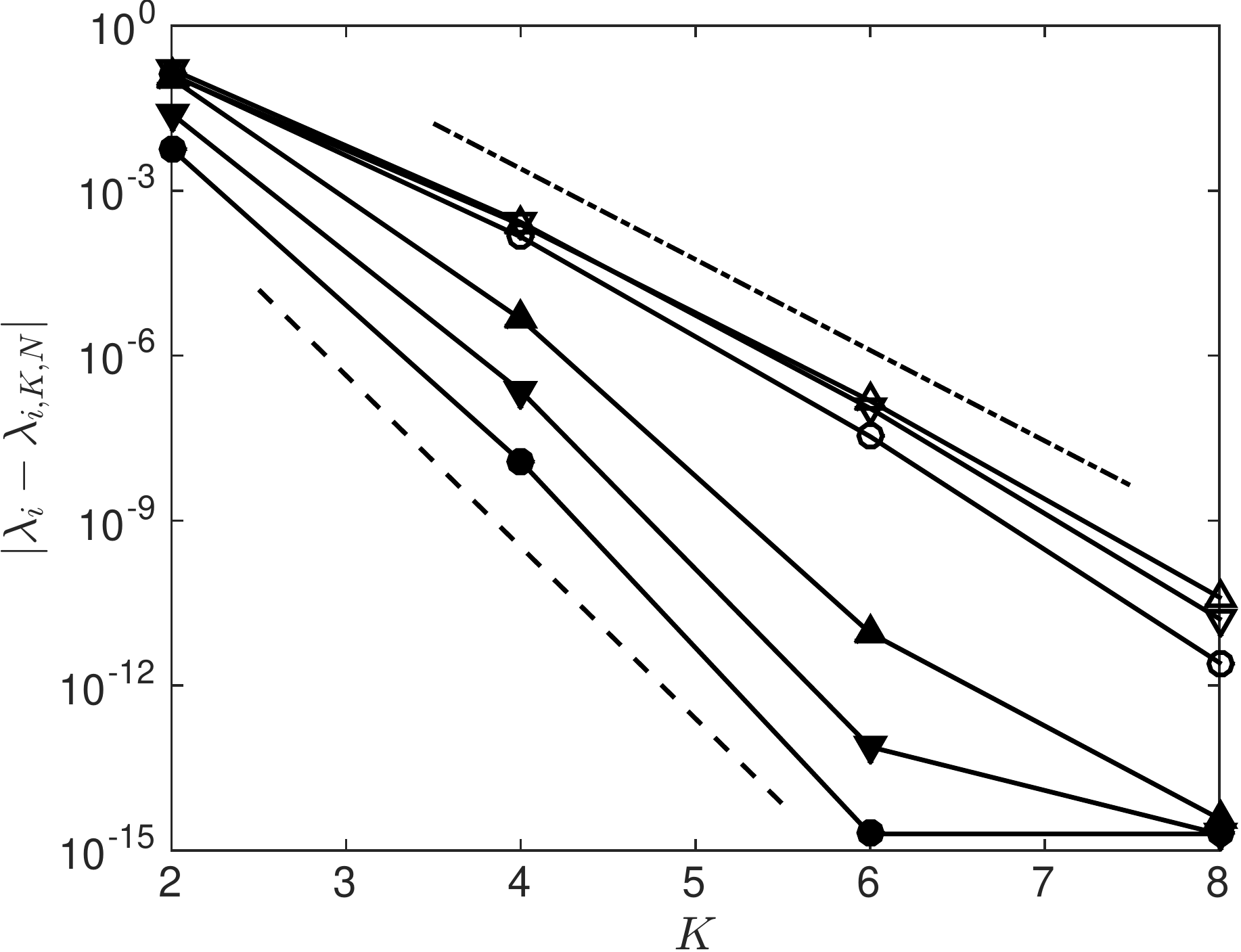}
\hspace*{\fill} (a). $c=1/2$. \hspace*{\fill}
\end{minipage}\hspace*{\fill}%
\begin{minipage}[h]{0.35\textwidth}
\includegraphics[width=1\textwidth]{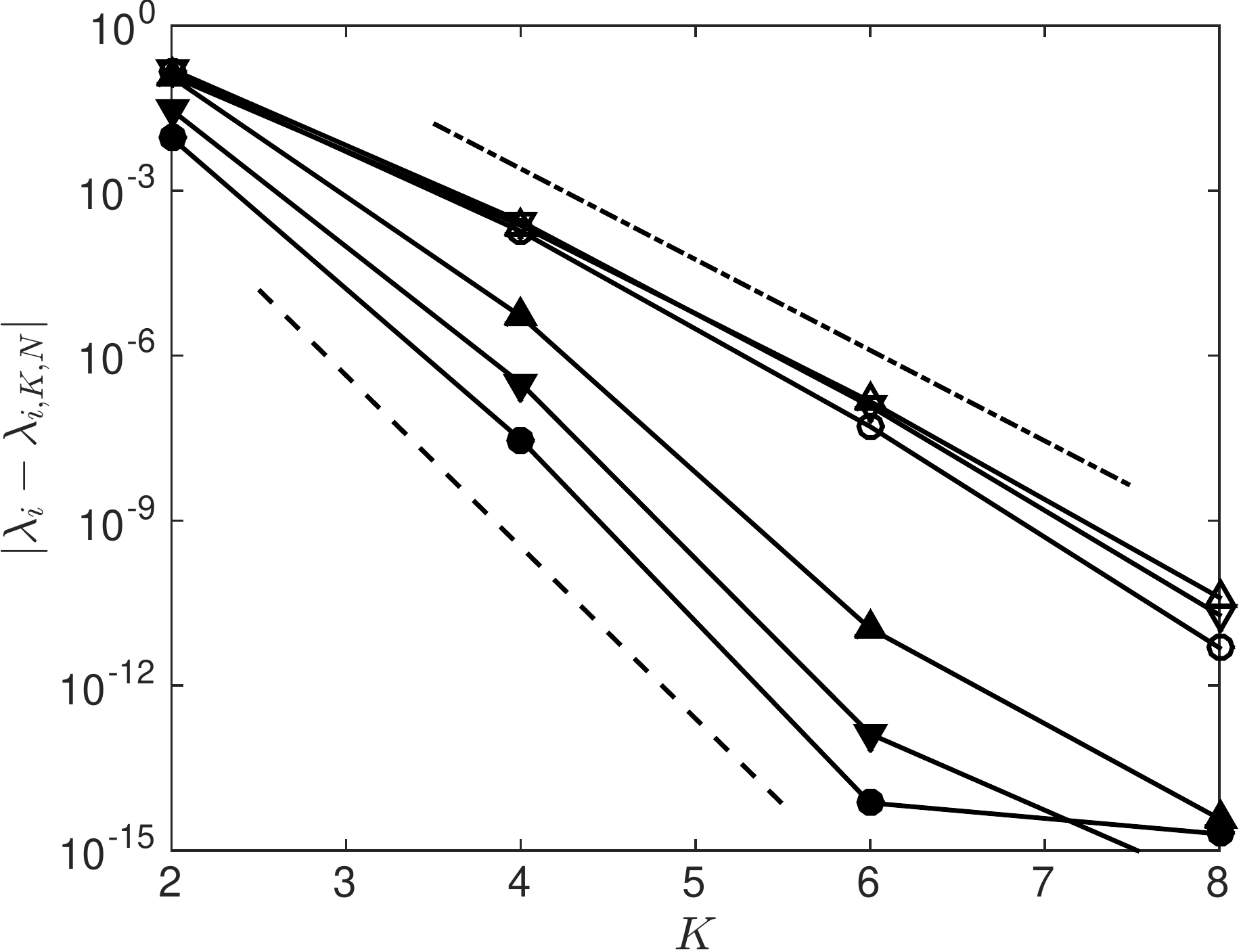}
\hspace*{\fill} (b). $c=2/3$.  \hspace*{\fill}
\end{minipage}\hspace*{\fill}
\caption{Approximation errors $|\lambda_i-\lambda_{i,K,N}|$  ($\circ: \lambda_1$,  $\triangledown:\lambda_2=\lambda_3$ and  $\vartriangle:\lambda_4=\lambda_5$) versus $K$ for
Method I (primitive markers) and Method II (filled markers) on the unit disk.
The dash-dot  and  dashed lines are the reference exponential $y = 10^{-1.65 K+4}$
and $y=10^{-3.12K + 3}$, respectively.}
\label{fig:SMdisk}
\end{figure}

\begin{figure}[h!]
\hspace*{\fill}
\begin{minipage}[h]{0.35\textwidth}
\includegraphics[width=1\textwidth]{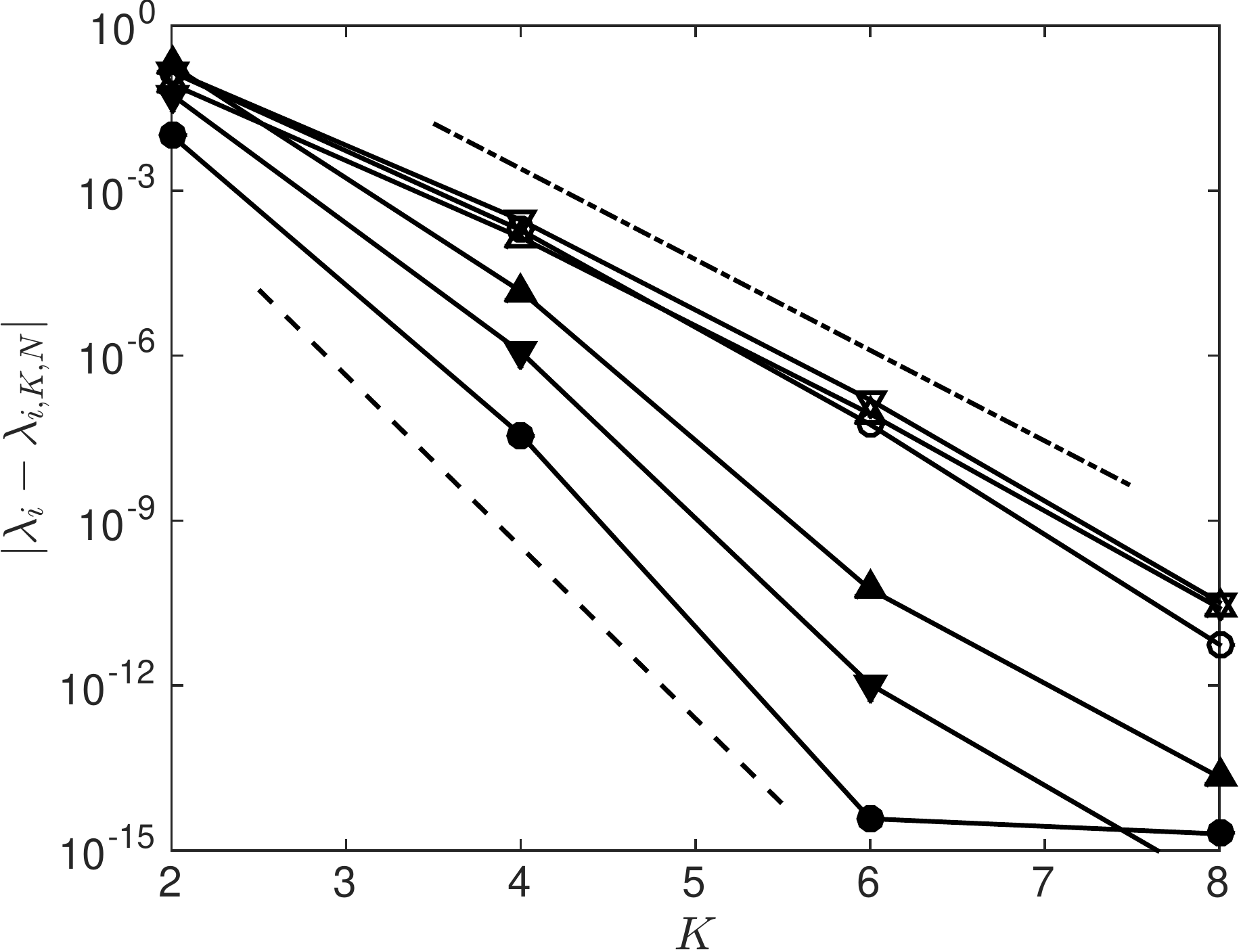}
\hspace*{\fill} (a). $c=1/2$. \hspace*{\fill}
\end{minipage}\hspace*{\fill}%
\begin{minipage}[h]{0.35\textwidth}
\includegraphics[width=1\textwidth]{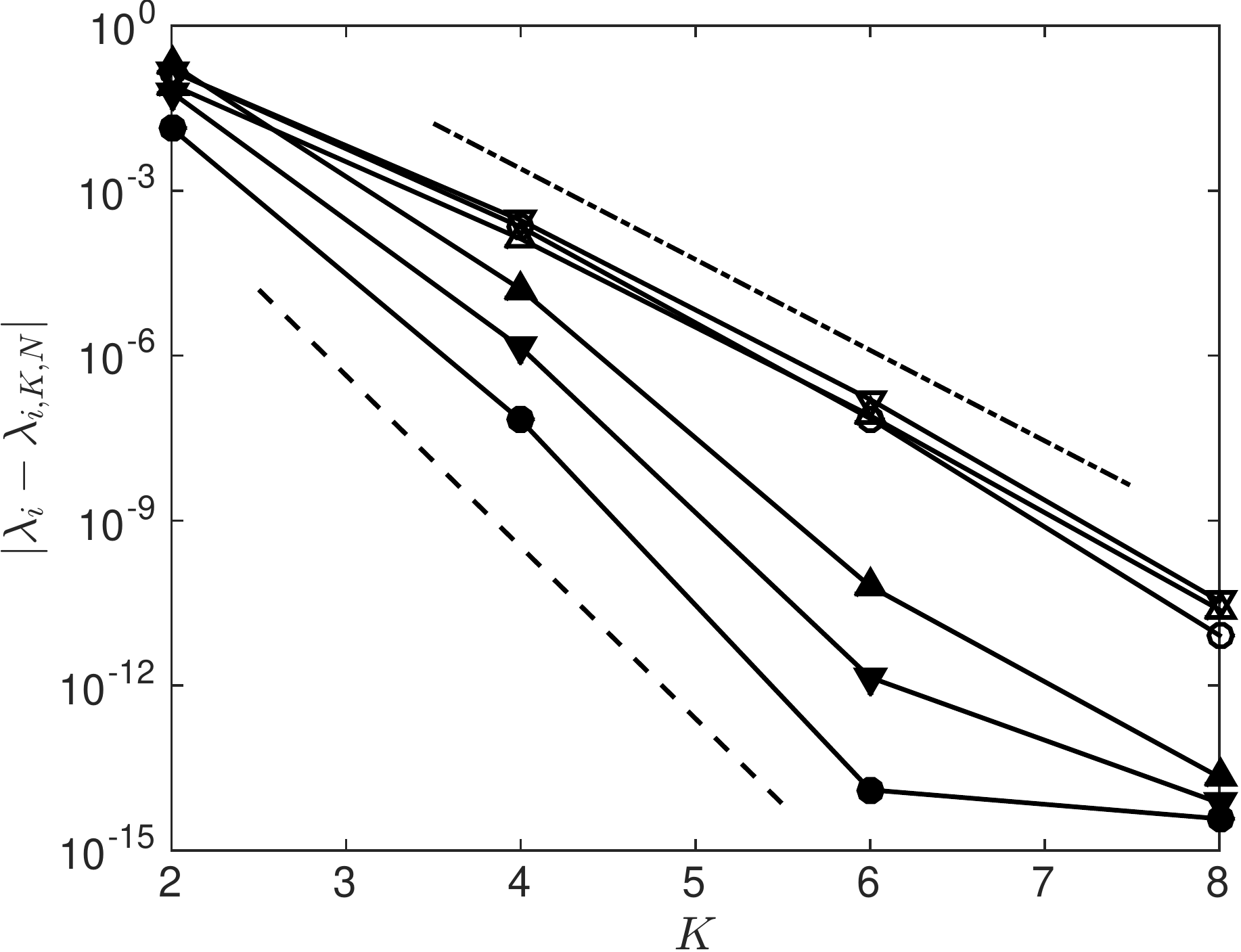}
\hspace*{\fill} (b). $c=2/3$.  \hspace*{\fill}
\end{minipage}\hspace*{\fill}
\caption{Approximation errors $|\lambda_i-\lambda_{i,K,N}|$  ($\circ: \lambda_1$,  $\triangledown:\lambda_2=\lambda_3=\lambda_4$ and  $\vartriangle:\lambda_5=\dots=\lambda_9$) versus $K$ on unit ball for Method I  (primitive markers) and Method II (filled markers).
The dash-dot  and  dashed lines are the reference exponential $y = 10^{-1.65 K+4}$
and $y=10^{-3.12K + 3}$, respectively.}
\label{fig:SMball}
\end{figure}

\subsection{Why and how do our methods work?}
\label{Why}
We first carry out a spectral analysis on the unit ball, where
the Schr\"{o}dinger equation \eqref{eigenP} can be reformulated, by using  \eqref{eq:Delta},  in
the spherical-polar coordinates as following,
\begin{align}
\label{eigenSP}
-\frac{1}{r^{d-1}}\partial_r\big(r^{d-1}\partial_ru) -\frac{1}{r^2}\Delta_0^2u +\frac{c^2}{r^2} u =\lambda u.
\end{align}
We now represent  the unknown eigenfunction $u$
as an expansion of spherical harmonic functions,
\begin{align*}
 u(x) = \sum_{n=0}^{\infty} \sum_{\ell=0}^{a_n^d} \wh u^n_{\ell}(r)  Y^n_{\ell}(\xi),
 \qquad x=r\xi, \, r =|x|,
\end{align*}
 and obtain an infinite system of second-order ordinary differential equations,
\begin{align}
\label{BeigenS}
-\frac{1}{r^{d-1}} \partial_r (r^{d-1}\partial_r) \wh u^{n}_{\ell}  + \frac{c^2+n(n+d-2)}{r^2} \wh u^{n}_{\ell}
= \lambda \wh u^{n}_{\ell}, \qquad 1\le \ell\le a_n^d,\,  n\ge 0.
\end{align}
Recall that $\beta_n= \beta(n,c,d)=\sqrt{ c^2+(n+d/2-1)^2}$, { the} system \eqref{BeigenS} is  then equivalent to
\begin{align*}
r^2\partial_r^2 [r^{d/2-1}\widehat u_{\ell}^{n}(r)] + r  \partial_r [r^{d/2-1}\widehat u_{\ell}^{n}(r)]  +\big[\lambda  r^2 -   \beta_n^2\big] r^{d/2-1}\widehat u_{\ell}^{n}(r)  =0,
\end{align*}
Making the variable transformation $\eta=\sqrt{\lambda}\, r$ and setting $\wh v^n_{\ell}(\eta)= r^{d/2-1} \wh u^n_{\ell}(r)$, one obtains
\begin{align*}
\eta^2\partial_{\eta}^2 \wh v_{\ell}^{n}(\eta) + \eta  \partial_{\eta} \wh v_{\ell}^{n}(\eta) +\big(\eta^2 -   \beta_n^2\big) \wh v_{\ell}^{n}(\eta)  =0,
\end{align*}
which is exactly the Sturm-Liouville equation for the first kind Bessel function,
hence  admits a unique solution  $\wh v^n_{\ell}(\eta)= J_{\beta_n}(\eta)$.
In return,
\begin{align}
\label{eigenF}
 \widehat u_{\ell}^{n}(r)= r^{1-d/2} \widehat v_{n}(\eta)= r^{1-d/2}  J_{\beta_n}(\sqrt{\lambda}\,r) =\sum_{m=0}^{\infty} \frac{(-1)^m}{m!\Gamma(m+\beta_n+1)} \Big(\frac{\sqrt{\lambda}\,{r}}{2}\Big)^{2m+\beta_n} r^{1-d/2}.
\end{align}

Since the homogeneous Dirichlet boundary condition in  \eqref{eq:Delta} implies
$\wh u_{\ell}^n(1)=0$,  one readily finds that
the eigenvalue $\lambda$ of \eqref{BeigenS} satisfies
\begin{align*}
J_{\beta_n}(\sqrt{\lambda})  = 0.
\end{align*}

Let us now shed light on the mechanism of our methods. The terms on the left-hand side of \eqref{BeigenS} can be merged into one, i.e.,
\begin{align*}
-\frac{1}{r^{d-1}}\partial_r\big(r^{d-1}\partial_r\wh u^n_{\ell}) +\frac{c^2+n (n+d-2)}{r^2}\wh u^n_{\ell}
=&\, -\partial_r^2 \wh u^n_{\ell} - \frac{d-1}{r} \partial_r \wh u^n_{\ell}  +\frac{c^2+n (n+d-2)}{r^2}\wh u^n_{\ell}
\\
= -\partial_r^2 \wh u^n_{\ell} - \frac{2\nu+\mu}{r} \partial_r \wh u^n_{\ell} -\frac{\nu(\nu+\mu-1)}{r^2}  \wh u^n_{\ell}
=&\,  -\frac{1}{r^{\mu+\nu}} \partial_r \Big[ r^{\mu} \partial_r \big( r^{\nu} \wh u^n_{\ell} \big) \Big],
\end{align*}
where  $\mu$ and $\nu$ are parameters to be determined by
\begin{align*}
2\nu+\mu=d-1, \qquad
-\nu(\nu+\mu-1) = c^2+n (n+d-2),
\end{align*}
or explicitly,
\begin{align*}
 \nu = d/2-1 \pm \beta_n, \qquad
 \mu =1\mp 2\beta_n.
\end{align*}
In particular,  taking
\begin{align*}
-\frac{1}{r^{d-1}}\partial_r\big(r^{d-1}\partial_r\wh u^n_{\ell}) +\frac{c^2+n (n+d-2)}{r^2}\wh u^n_{\ell}
= -r^{-\beta_n-d/2}\partial_r \Big[ r^{2\beta_n+1} \partial_r \big( r^{d/2-1-\beta_n} \wh u^n_{\ell} \big) \Big],
\end{align*}
{one eliminates}  the singularity of  the eigenfunction $ \wh u^n_{\ell}$
defined in \eqref{eigenF}  by multiplying $r^{d/2-1-\beta}$ such that
the analytic function
$r^{d/2-1-\beta} \wh u^n_{\ell}(r)$ can be well approximated by {the} Jacobi polynomials
in  $r$ on $[0,1]$
with an exponential rate of convergence. This {provides an explanation for the} effectiveness of Method I.

As for Method II, we note that, under the modified polar-spherical  coordinates $x=\sqrt{\rho}\,\xi$ with $\rho=r^2$,
\begin{align}
-\frac{1}{r^{d-1}}\partial_r\big(r^{d-1}\partial_r\wh u^n_{\ell}) +\frac{c^2+n (n+d-2)}{r^2}\wh u^n_{\ell}
= -4 r^{1-d/2-\beta_n}\partial_\rho \Big[ \rho^{\beta_n+1} \partial_\rho \big( r^{d/2-1-\beta_n} \wh u^n_{\ell} \big) \Big].
\label{Drho}
\end{align}
The right-side hand of \eqref{Drho} is self-adjoint with respect to the measure $r^{d-1}dr= \frac12 r^{d-2}d\rho$. {
According} to  \eqref{eigenF}, $r^{d/2-1-\beta_n} \wh u^n_{\ell} (r)$  can be  approximated by the
Jacobi polynomials in $\rho=r^2$  on $[0,1]$
with an exponential rate of convergence. This {explains} the effectiveness of Method II.

\def\CL{\mathcal{L}}

\section{Novel spectral method on a planar  sector}

In this section, we study two novel spectral methods
for the Schr\"{o}dinger equation on a planar circular  sector,
which is enclosed by the arc $\Gamma_2$ and two radii $\Gamma_1$ and $\Gamma_3$ (see the left of Figure \ref{Sector}):
%For simplicity, we concentrate ourselves on the planar sectors,
\begin{align}
\label{sector}
\Omega = \Lambda:=& \{ (r,\theta): 0\le r<1, 0< \theta< \gamma^{-1} \pi   \} ,
\end{align}
where $\gamma \ge \frac12$, and $(r,\theta)$ is the polar coordinates satisfying
$x=(r\cos\theta, r\sin \theta) $.
\begin{figure}[h!]
\hfill\includegraphics[width=0.22\textwidth]{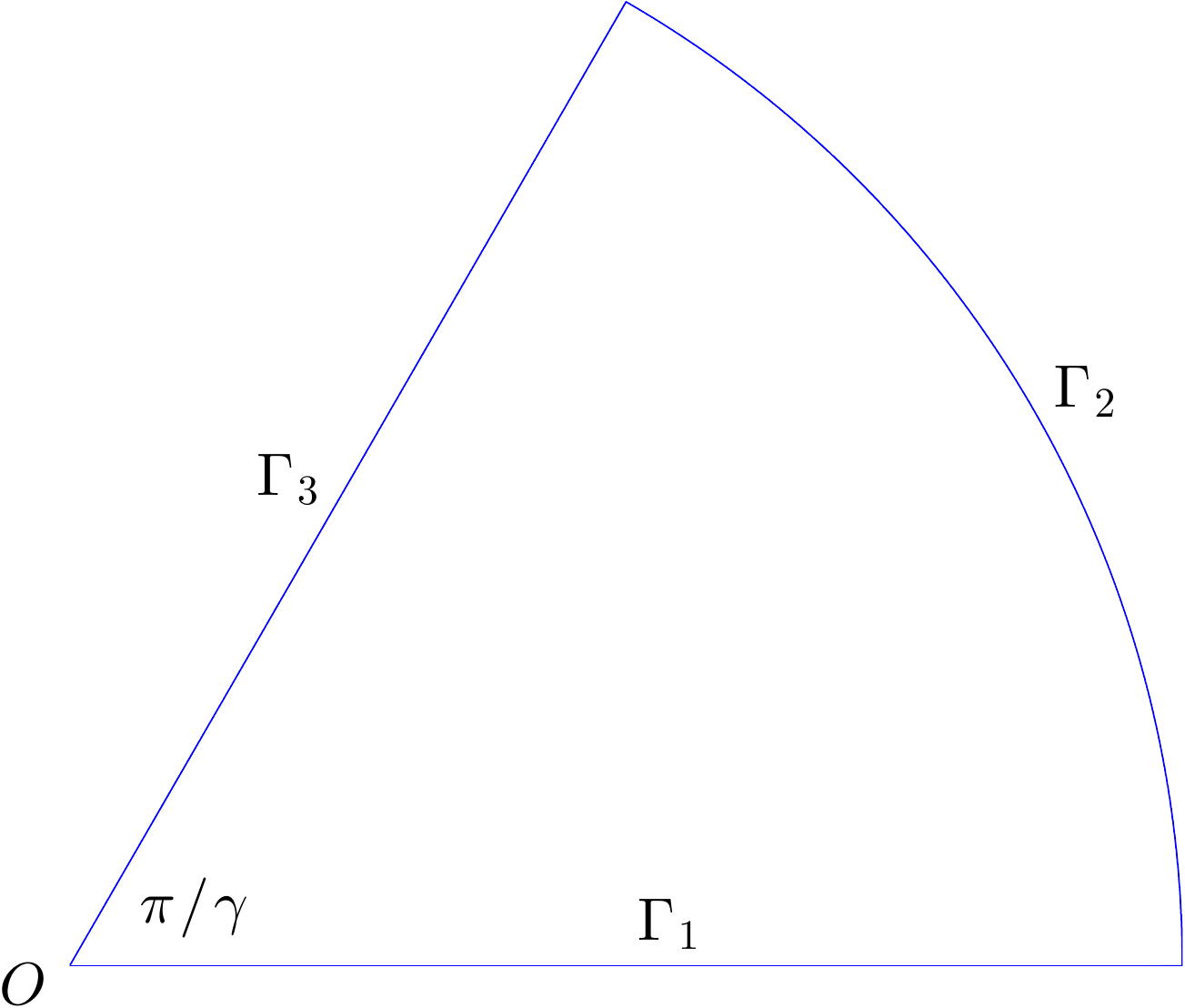}\hspace*{\fill}
\hfill\includegraphics[width=0.22\textwidth]{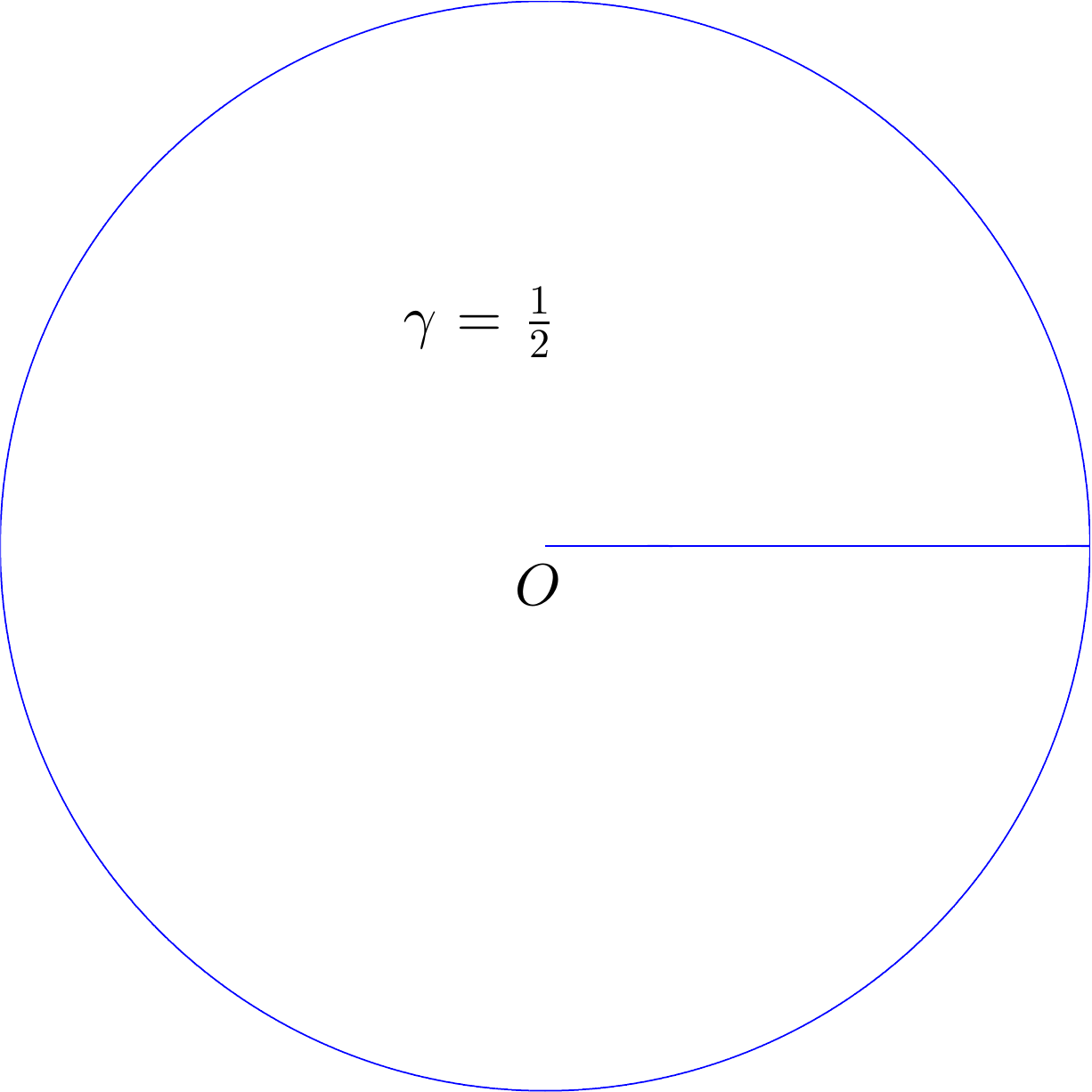}\hspace*{\fill}
\hfill\includegraphics[width=0.22\textwidth]{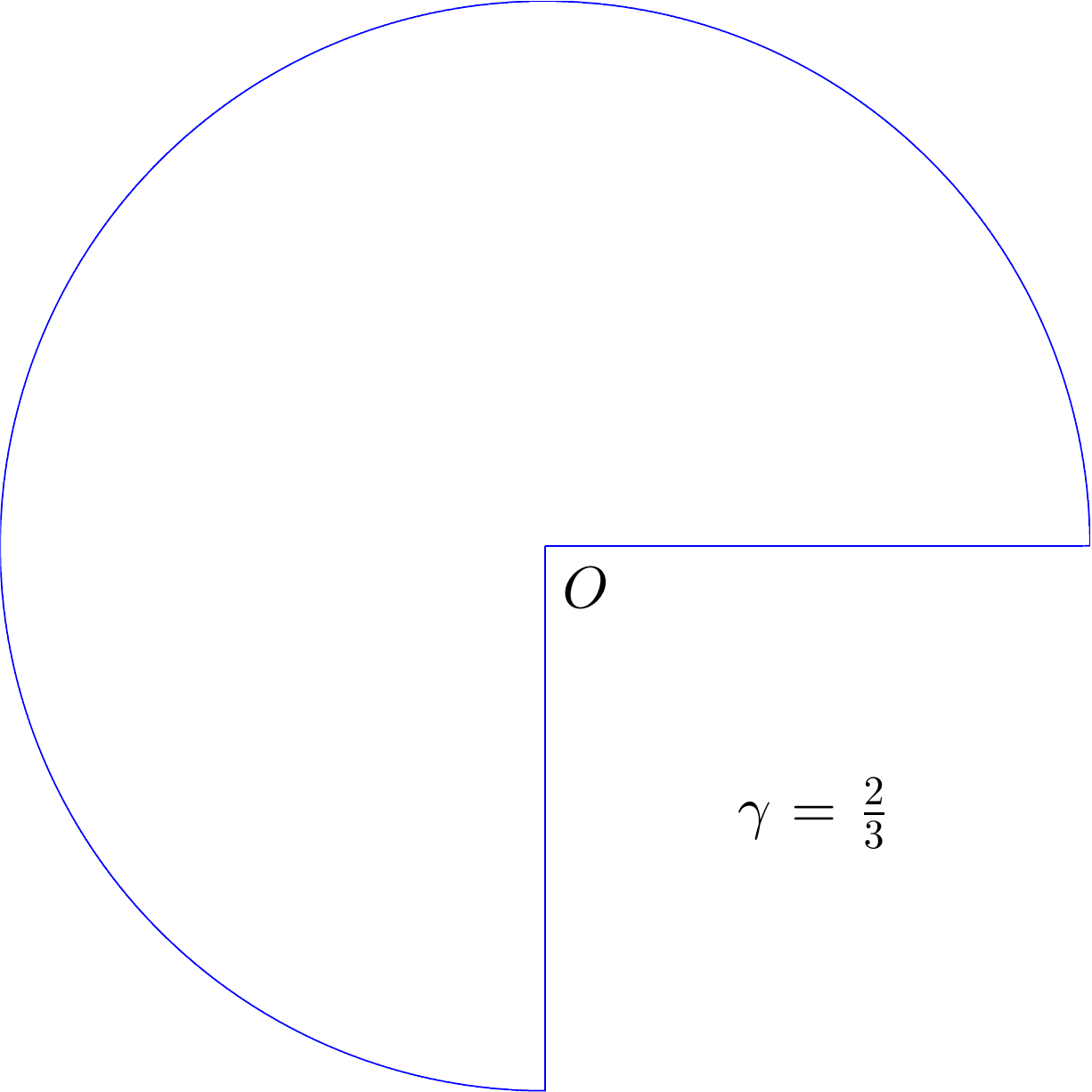}\hspace*{\fill}
\caption{Planar circular   sectors.}
\label{Sector}
\end{figure}

A heuristic idea is to expand the unknown eigenfunction $u$ by sine series 
\begin{align*}
  u(x) = \sum_{n=1}^{\infty} \wh u_n(r) \sin(n \gamma  \theta)
\end{align*}
in \eqref{eigenP} to  obtain
\begin{align*}
\big(-\Delta& + \frac{c^2}{r^2} \big)u(x) =\sum_{n=1}^{\infty}
\left[ -\frac{1}{r}\partial_r\big(r\partial_r\wh u_n(r)) +\frac{\beta^2_n}{r^2}\wh u_n(r) \right]
\sin(n \gamma\theta)
\\
=&-\sum_{n=1}^{\infty} r^{-\beta_n-1}\partial_r \Big[ r^{2\beta_n+1} \partial_r \big( r^{-\beta_n} \wh u_n(r) \big) \Big] \sin(n \gamma\theta),
\end{align*}
where $\beta_n:=\beta(c,n) =\sqrt{c^2+\gamma^2n^2} $.
This leads to the following equivalent eigen equations,
\begin{align*}
- r^{-\beta_n-1}\partial_r \Big[ r^{2\beta_n+1} \partial_r \big( r^{-\beta_n} \wh u_n(r) \big) \Big]
= \lambda \wh u_n(r), \qquad n\ge 0,
\end{align*}
which motivates us to propose two types of spectral methods for \eqref{eigenP} on a planar sector.
%\label{BeigenS}

\subsection{Spectral method I}

{We} propose an approximation scheme on a circular sector  in analogue to Method I in the {previous} section.
Let us first introduce the Sobolev space
$${}_0W^1(\Lambda)
= \left\{ u\in W^1(\Lambda): u=0  \text{ on }  \Gamma_1 \cup \Gamma_3   \right\}.$$
\begin{lemma}
\label{SOSF}
Define
\begin{align*}
\Phi_{k}^n(x):=\dfrac{2k+2\beta_n}{k+2\beta_n}J_k^{-1,2\beta_n}(2r-1)  r^{\beta_n} \sin (n \gamma \theta ),
\quad  k\in\NN_0, n\in \NN.
\end{align*}
Then  $\Phi_{k}^n(x), \, k\in \NN_0,\, n\in \NN$ form a Sobolev orthogonal basis
in ${}_0W^1(\Lambda)$ in the following sense,
\begin{align*}
 (\nabla  \Phi_{k}^n, \nabla  \Phi_{j}^m)_{\Lambda} + c^2 ( \Phi_{k}^n, \Phi_{j}^m)_{r^{-2},\Lambda}
  = \frac{\pi}{2\gamma}  \delta_{m,n}\delta_{j,k}  (k+\beta_n) (2-\delta_{k,0}) .
\end{align*}
Moreover,
\begin{align*}
 (\Phi_{k}^n, \Phi_{j}^m)_{\Lambda} =\, & \frac{\pi}{2\gamma}\delta_{n,m}\times
         \begin{cases}
     \frac {( k+\beta_n )  ( k^2 +2 k\beta_n + 4 \beta_n^{2}-1) }{ ( k+\beta_n -1)  ( k+\beta_n+1) ( 2 k+2 \beta_n-1
 )  ( 2k+2 \beta_n+1) },
  & k=j\ge 1, \\[0.3em]
  \frac{1}{2(\beta_n+1)}, & k=j=0,\\[0.3em]
  - \frac { ( 2\beta_n-1) ( 2 \beta_n+1  ) }{   ( 2 k+2 \beta_n-1  )  ( 2 k+2 \beta_n+1  ) ( 2k+2\beta_n+3  )   },
     & j = k+1,\\[0.3em]
  - {\frac {    ( k+1  )  ( k+2 \beta_n+1  ) }{2 ( k+\beta_n+1  )   ( 2 k+2 \beta_n+1  )    ( 2 k+2 \beta_n+3  ) }},
   & j = k+2,\\[0.3em]
  - \frac { ( 2\beta_n-1) ( 2 \beta_n+1  ) }{   ( 2 j+2 \beta_n-1  )  ( 2 j+2 \beta_n+1  ) ( 2j+2\beta_n+3  )   },
     & k = j+1,\\[0.3em]
  - {\frac {    ( j+1  )  ( j+2 \beta_n+1  ) }{2 ( j+\beta_n+1  )   ( 2 j+2 \beta_n+1  )    ( 2 j+2 \beta_n+3  ) }},
   & k = j+2,\\[0.3em]
   0, & \text{otherwise}.
     \end{cases}
\end{align*}

\end{lemma}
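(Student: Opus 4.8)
The plan is to separate variables and reduce every bilinear form to a one-dimensional radial integral, precisely as in the proof of Lemma~\ref{SOBF} with $d=2$. Fix the angular index and write $\Phi_k^n(x)=\varphi_k(r)\sin(n\gamma\theta)$ with $\varphi_k(r)=\frac{2k+2\beta_n}{k+2\beta_n}J_k^{-1,2\beta_n}(2r-1)\,r^{\beta_n}$; membership in ${}_0W^1(\Lambda)$ is clear since $\sin(n\gamma\theta)$ vanishes on $\Gamma_1\cup\Gamma_3$ and $\beta_n>0$ keeps the $r^{-2}$-weighted norm finite. In polar coordinates $\nabla\Phi_k^n=\bigl((\varphi_k)'\sin(n\gamma\theta),\,r^{-1}n\gamma\,\varphi_k\cos(n\gamma\theta)\bigr)$, and the elementary identities $\int_0^{\pi/\gamma}\sin(n\gamma\theta)\sin(m\gamma\theta)\,d\theta=\int_0^{\pi/\gamma}\cos(n\gamma\theta)\cos(m\gamma\theta)\,d\theta=\frac{\pi}{2\gamma}\delta_{m,n}$ (for $m,n\ge1$) force $m=n$ in every pairing and supply the prefactor $\frac{\pi}{2\gamma}$. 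This is the exact counterpart of the spherical-harmonic orthogonality that yields $\omega_d\delta_{m,n}\delta_{\ell,\iota}$ in the ball case.

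For the energy form, after angular integration the factor $\gamma^2n^2$ from the tangential derivative merges with the $c^2$ from the $r^{-2}$ term into $\beta_n^2=c^2+\gamma^2n^2$, leaving
\[
\int_0^1\Bigl[(\varphi_k)'(\varphi_j)'\,r+\beta_n^2 r^{-1}\varphi_k\varphi_j\Bigr]\,dr.
\]
Setting $\varphi_k=r^{\beta_n}p_k$ with $p_k=\frac{2k+2\beta_n}{k+2\beta_n}J_k^{-1,2\beta_n}(2r-1)$ and integrating the cross term $\beta_n r^{2\beta_n}(p_kp_j)'$ by parts, the two $2\beta_n^2 r^{2\beta_n-1}p_kp_j$ contributions cancel and I am left with $\int_0^1 r^{2\beta_n+1}p_k'p_j'\,dr+\beta_n p_k(1)p_j(1)$, the endpoint $r=0$ contributing nothing because $\beta_n>0$. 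The differential recurrence~\eqref{diff} gives $p_k'(r)=(2k+2\beta_n)J_{k-1}^{0,2\beta_n+1}(2r-1)$, so after the substitution $\zeta=2r-1$ the integral is a pairing of $J_{k-1}^{0,2\beta_n+1}$ and $J_{j-1}^{0,2\beta_n+1}$ against the weight $(1+\zeta)^{2\beta_n+1}$; by the orthogonality~\eqref{Jorth} it is diagonal and equals $2(k+\beta_n)$ when $k=j\ge1$. Since \eqref{GJacobir} gives $J_k^{-1,2\beta_n}(1)=0$ for $k\ge1$ and $=1$ for $k=0$, the boundary term survives only at $k=j=0$, where it equals $\beta_n$; collecting the cases reproduces $(k+\beta_n)(2-\delta_{k,0})$.

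For the mass matrix the same angular step gives $\frac{\pi}{2\gamma}\delta_{m,n}$ and reduces the pairing to $C_kC_j\int_0^1 r^{2\beta_n+1}J_k^{-1,2\beta_n}(2r-1)J_j^{-1,2\beta_n}(2r-1)\,dr$ with $C_k=\frac{2k+2\beta_n}{k+2\beta_n}$. Under $\zeta=2r-1$ this becomes $J_k^{-1,2\beta_n}J_j^{-1,2\beta_n}$ integrated against $(1+\zeta)^{2\beta_n+1}=(1-\zeta^2)\,w^{-1,2\beta_n}(\zeta)$, that is, the natural weight $w^{-1,2\beta_n}$ times the degree-two surplus $1-\zeta^2$. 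The main obstacle is this last computation. For $k,j\ge1$ I would first apply \eqref{GJacobir} to trade $J_k^{-1,2\beta_n}$ for $\frac{\zeta-1}{2}J_{k-1}^{1,2\beta_n}$, restoring genuinely orthogonal Jacobi polynomials and thereby avoiding the non-integrable behaviour of $w^{-1,2\beta_n}$ at $\zeta=1$, then expand the resulting degree-two multiplier through the three-term recurrence; orthogonality~\eqref{Jorth} then leaves only the indices $j\in\{k-2,\dots,k+2\}$, producing the penta-diagonal pattern, and simplifying the ratios of Gamma functions yields the six listed entries. The rows and columns with $k=0$ or $j=0$, where $p_0$ is constant and the recurrence degenerates, must be checked directly from the supplemented definition \eqref{GJacobir}. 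As these radial integrals coincide verbatim with those in the proof of Lemma~\ref{SOBF} at $d=2$, the entire bookkeeping carries over with $\omega_d$ replaced by $\frac{\pi}{2\gamma}$.
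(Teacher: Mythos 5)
Your proposal is correct and takes essentially the same route as the paper: the paper proves this lemma by noting it is "similar to Lemma \ref{SOBF}", whose appendix proof does exactly what you do --- angular orthogonality (Fourier--sine in place of spherical harmonics, producing the factor $\frac{\pi}{2\gamma}\delta_{m,n}$), the integration-by-parts identity \eqref{II0} specialized to $d=2$ to reduce the energy form to $\int_0^1 r^{2\beta_n+1}p_k'p_j'\,dr$ plus the boundary term $\beta_n\delta_{k,0}\delta_{j,0}$, the derivative relation \eqref{diff} together with \eqref{Jorth} for the diagonal stiffness entries, and an expansion in a genuinely orthogonal Jacobi family for the penta-diagonal mass matrix. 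The only minor deviation is in that last step: the paper expands $\frac{2k+2\beta_n}{k+2\beta_n}J_k^{-1,2\beta_n}$ directly in the family $J^{0,2\beta_n+1}$, which is orthogonal with respect to exactly the weight $(1+\zeta)^{2\beta_n+1}$ present, via the connection identity \eqref{Jac10}--\eqref{Jac11}, whereas you factor out $(\zeta-1)$ via \eqref{GJacobir} and absorb the remaining quadratic multiplier $(1-\zeta^2)$ through the three-term recurrence for $J^{1,2\beta_n}$; both reduce to \eqref{Jorth} and yield the same six entries.
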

Lemma \ref{SOSF} can be proved similarly as Lemma \ref{SOBF}, we omit the details.

Define the  approximation space
\begin{align*}
W_{K,N}^{\Lambda}=
\left\{ \Phi_{k}^n:  1 \le k\le K, \, 0\le  n \le N \right\}  \subset W^1_0(\Lambda).
\end{align*}
Then the spectral-Galerkin  approximation scheme is, to find $u_{K,N}\in W_{K,N}^{\Lambda}$ such that
\begin{align*}
a(u_{K,N},v)= (\nabla u_{K,N}, \nabla v)_{\Lambda} + c^2 (u_{K,N},v)_{r^{-2},{\Lambda}}
  = \lambda_{K,N} (u_{K,N},v)_{\Lambda}, \qquad v\in W_{K,N}^{\Lambda}.
\end{align*}
It is worthy to note that this approximation scheme leads to an
algebraic eigen system
with a diagonal stiffness matrix  and a penta-diagonal mass matrix,
which can be  easily decoupled  and  solved in parallel.

\subsection{Spectral method II}
Our second method for \eqref{eigenP} on a circular sector is based on
the following lemma, which is an analogue to Lemma \ref{SOBP}.

\begin{lemma}  Define
\label{SOSP} \begin{align*}
\Psi_{k}^n(x):=\dfrac{2k+\beta_n}{k+\beta_n}J_k^{-1,\beta_n}(2r^2-1)  r^{\beta_n} \sin (n \gamma \theta ),
\quad  k\in \NN_0,\, n\in \NN.
\end{align*}
Then $\Psi_{k}^n(x), \, k\in \NN_0,\, n\in \NN$ form a Sobolev orthogonal basis
in ${}_0W^1(\Lambda)$ in the following sense,
\begin{align*}
      (\nabla& \Psi_{k}^{n}, \nabla \Psi_{j}^{m})_{\Lambda} + c^2 (\Psi_{k}^{n},\Psi_{j}^{m})_{r^{-2},{\Lambda}}    =\frac{\pi}{2\gamma} \delta_{m,n}  \delta_{k,j} (2k+\beta_n) (2-\delta_{k,0})  .
\end{align*}
Moreover,
\begin{align*}
     (\Psi_{k}^{n},\Psi_{j}^{m})_{\Lambda} :=\, & \frac{\pi}{2\gamma}\delta_{n,m}\times
      \begin{cases}
 \frac {1}{ 2 k+\beta_n+1} +\frac{1-\delta_{k,0}}{ 2 k+\beta_n-1},
  & k=j, \\
  - \frac{1}{ 2(2k+\beta_n+1) },
     & j = k+1,\\
  - \frac{1}{2( 2j+\beta_n+1)},
     & k = j+1,\\
   0, & \text{otherwise}.
     \end{cases}
\end{align*}
\end{lemma}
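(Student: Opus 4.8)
The plan is to reduce everything to one-dimensional radial integrals by separation of variables, exactly as in the proof of Lemma \ref{SOBP} (the radial ansatz here is literally the $d=2$ ball-polynomial one, since $\beta_n+1-d/2=\beta_n$ when $d=2$). First I would record the angular orthogonalities $\int_0^{\gamma^{-1}\pi}\sin(n\gamma\theta)\sin(m\gamma\theta)\,d\theta=\frac{\pi}{2\gamma}\delta_{m,n}$ and the same relation for cosines, noting that each $\sin(n\gamma\theta)$ already vanishes on $\Gamma_1\cup\Gamma_3$. Writing $\Psi_k^n=g_k^n(r)\sin(n\gamma\theta)$ with $g_k^n(r)=\frac{2k+\beta_n}{k+\beta_n}J_k^{-1,\beta_n}(2r^2-1)\,r^{\beta_n}$ and using $|\nabla u|^2=(\partial_r u)^2+r^{-2}(\partial_\theta u)^2$ together with $dx=r\,dr\,d\theta$, the gradient's angular contribution produces $\gamma^2n^2/r^2$, which combines with the potential term $c^2/r^2$ into $\beta_n^2/r^2$ because $\beta_n^2=c^2+\gamma^2n^2$. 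This yields the $\delta_{m,n}$ factor and reduces the energy form to $a(\Psi_k^n,\Psi_j^n)=\frac{\pi}{2\gamma}\int_0^1\big[(g_k^n)'(g_j^n)'\,r+\frac{\beta_n^2}{r}g_k^n g_j^n\big]\,dr$ and the mass form to $\frac{\pi}{2\gamma}\int_0^1 g_k^n g_j^n\,r\,dr$.

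Next I would pass to $\rho=r^2$ and set $w_k^n(\rho)=\frac{2k+\beta_n}{k+\beta_n}J_k^{-1,\beta_n}(2\rho-1)$, so that the radial factor is $g_k^n=w_k^n(r^2)\,r^{\beta_n}$. For the stiffness, substituting $g_k^n=w_k^n(\rho)r^{\beta_n}$ and expanding (using $r^{2\beta_n-1}\,dr=\frac12\rho^{\beta_n-1}\,d\rho$), the cross term integrates by parts in $\rho$ and collapses, leaving
\[
a(\Psi_k^n,\Psi_j^n)=\frac{\pi}{2\gamma}\Big[\,2\int_0^1\rho^{\beta_n+1}\,\partial_\rho w_k^n\,\partial_\rho w_j^n\,d\rho+\beta_n\,w_k^n(1)w_j^n(1)\,\Big].
\]
The boundary term is evaluated via \eqref{GJacobir}, which gives $J_k^{-1,\beta_n}(1)=0$ for $k\ge1$ and $J_0^{-1,\beta_n}(1)=1$, so it contributes only the $(0,0)$ entry $\frac{\pi}{2\gamma}\beta_n$. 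For the bulk I would apply the differential recurrence \eqref{diff} (with $\partial_\rho=2\partial_\zeta$) to get $\partial_\rho w_k^n=(2k+\beta_n)J_{k-1}^{0,\beta_n+1}(2\rho-1)$ for $k\ge1$ and $\partial_\rho w_0^n=0$; the integral is then the $w^{0,\beta_n+1}$-orthogonality \eqref{Jorth} with $\gamma^{\,0,\beta_n+1}_{k-1}=2^{\beta_n+2}/(2k+\beta_n)$, which forces diagonality and yields $2(2k+\beta_n)$ for $k\ge1$. Merging both regimes reproduces the factor $(2k+\beta_n)(2-\delta_{k,0})$. This diagonalization is the same mechanism as the self-adjoint factorization appearing in \eqref{Drho} for $d=2$.

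For the mass matrix, $(\Psi_k^n,\Psi_j^n)_\Lambda=\frac{\pi}{4\gamma}\int_0^1 w_k^n w_j^n\,\rho^{\beta_n}\,d\rho$, i.e. an integral against the weight $(1+\zeta)^{\beta_n}=w^{0,\beta_n}$ while the polynomials carry the first parameter $-1$. Because of this one-unit mismatch in the first index, \eqref{Jorth} does not apply directly; instead I would use the contiguous relation (equivalently \eqref{GJacobir}, via $J_k^{-1,\beta_n}=\frac{k+\beta_n}{k}\frac{\zeta-1}{2}J_{k-1}^{1,\beta_n}$ and the fact that $\tfrac{\zeta-1}{2}(1+\zeta)^{\beta_n}=-\tfrac12 w^{1,\beta_n}$) to show $J_k^{-1,\beta_n}$ is a two-term combination of $J_k^{0,\beta_n}$ and $J_{k-1}^{0,\beta_n}$. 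Expanding $w_k^n,w_j^n$ this way and invoking the $w^{0,\beta_n}$-orthogonality makes the Gram integral vanish unless $\{k,k-1\}\cap\{j,j-1\}\neq\varnothing$, i.e. $|k-j|\le1$, which gives the tridiagonal structure; tracking the connection coefficients and the constants $\gamma^{\,0,\beta_n}_{k},\gamma^{\,0,\beta_n}_{k-1}$ produces the stated diagonal, super- and sub-diagonal entries.

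The genuinely new content relative to Lemma \ref{SOBP} is only the angular reduction, so the main obstacle is bookkeeping rather than conceptual: correctly evaluating the surviving boundary term in the stiffness (hinging on $J_k^{-1,\beta_n}(1)=\delta_{k,0}$) and pinning down the two connection coefficients and normalizations that assemble the tridiagonal mass entries. A safe alternative, which I would note, is to transcribe the $d=2$ radial computation of Lemma \ref{SOBP} verbatim under the replacement $\omega_d\mapsto\pi/(2\gamma)$ and the redefinition $\beta_n=\sqrt{c^2+\gamma^2n^2}$, since the radial operators coincide.
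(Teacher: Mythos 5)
Your proposal is correct and takes essentially the same approach as the paper: the paper in fact omits this proof, stating only that it is similar to that of Lemma \ref{SOBP}, and your argument is precisely that transcription --- the sine orthogonality on $(0,\gamma^{-1}\pi)$ (contributing the factor $\frac{\pi}{2\gamma}\delta_{m,n}$ and merging $\gamma^2n^2/r^2$ with $c^2/r^2$ into $\beta_n^2/r^2$) replaces the spherical-harmonic reduction, while the radial work (integration by parts in $\rho=r^2$, the recurrence \eqref{diff} plus the orthogonality \eqref{Jorth} for the stiffness, the boundary term via $J_k^{-1,\beta_n}(1)=\delta_{k,0}$, and the two-term contiguous relation \eqref{Jac10} with \eqref{Jorth} for the tridiagonal mass) coincides with the $d=2$ computation of Appendix \ref{AppA} under $\omega_d\mapsto \pi/(2\gamma)$ and $\beta_n\mapsto\sqrt{c^2+\gamma^2n^2}$.

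One caveat on your final bookkeeping claim: if the connection coefficients are actually tracked, the diagonal mass entries come out as $\tfrac12\bigl[\tfrac{1}{2k+\beta_n+1}+\tfrac{1-\delta_{k,0}}{2k+\beta_n-1}\bigr]$ --- for instance $(\Psi_0^n,\Psi_0^n)_\Lambda=\tfrac{\pi}{2\gamma}\int_0^1 r^{2\beta_n+1}\,dr=\tfrac{\pi}{2\gamma}\cdot\tfrac{1}{2(\beta_n+1)}$ by direct integration --- i.e.\ half the value displayed in the lemma, whereas the super- and sub-diagonal entries match exactly; this factor of $2$ appears to be a typo in the paper's stated formula (present also in Lemma \ref{SOBP}, but not in Lemmas \ref{SOBF} and \ref{SOSF}, whose $k=j=0$ entry $\tfrac{1}{2(\beta_n+1)}$ is consistent with the direct computation), not a defect of your method.
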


We omit the proof, which is similar to that of Lemma \ref{SOBP} as in the Appendix \ref{AppA}.

Define the  approximation space
\begin{align*}
V_{K,N}^{\Lambda}=
\left\{ \Psi_{k}^n:  1 \le k\le K, \, 0\le  n \le N \right\}  \subset W^1_0(\Lambda).
\end{align*}
Then the spectral-Galerkin  approximation scheme is, to find $u_{K,N}\in V_{K,N}^{\Lambda}$ such that
\begin{align*}
 a(u_{K,N},v)=(\nabla u_{K,N}, \nabla v)_{\Lambda} + c^2 (u_{K,N},v)_{r^{-2},{\Lambda}}
  = \lambda_{K,N} (u_{K,N},v)_{\Lambda}, \qquad v\in V_{K,N}^{\Lambda}.
\end{align*}
This approximation scheme leads to an
algebraic eigen system
with a diagonal stiffness matrix  and a tridiagonal mass matrix,
which can also be   decoupled easily and  solved in parallel.

\subsection{Numerical experiments}
\label{SectNE}
Our novel spectral methods for \eqref{eigenP}  are examined   on the slit disk ($\gamma=\frac12$)
 and the circular sector with $\gamma=\frac{2}{3}$. The approximation errors of  the 3 smallest eigenvalues are reported in Figure \ref{fig:SMsect} in semi-log scale.
The exponential convergence is observed for both Method I and Method II. Furthermore, a comparison of Figure \ref{fig:SMsect} with Figures \ref{fig:SMdisk}-\ref{fig:SMball} reveals that 
both  Method I and Method II converge at a fixed order of their own regardless of $\Omega$ being a  ball or a circular sector.
Finally,  since the radial component of the eigenfunction
 $r^{-\beta_n} \wh u_n(r)$ is analytic in $\rho=r^2$,   Method II  nearly converges
 twice as fast as Method I, just as shown in Figure \ref{fig:SMsect}.

\begin{figure}[h!]
\hspace*{\fill}
\begin{minipage}[h]{0.35\textwidth}
\includegraphics[width=1\textwidth]{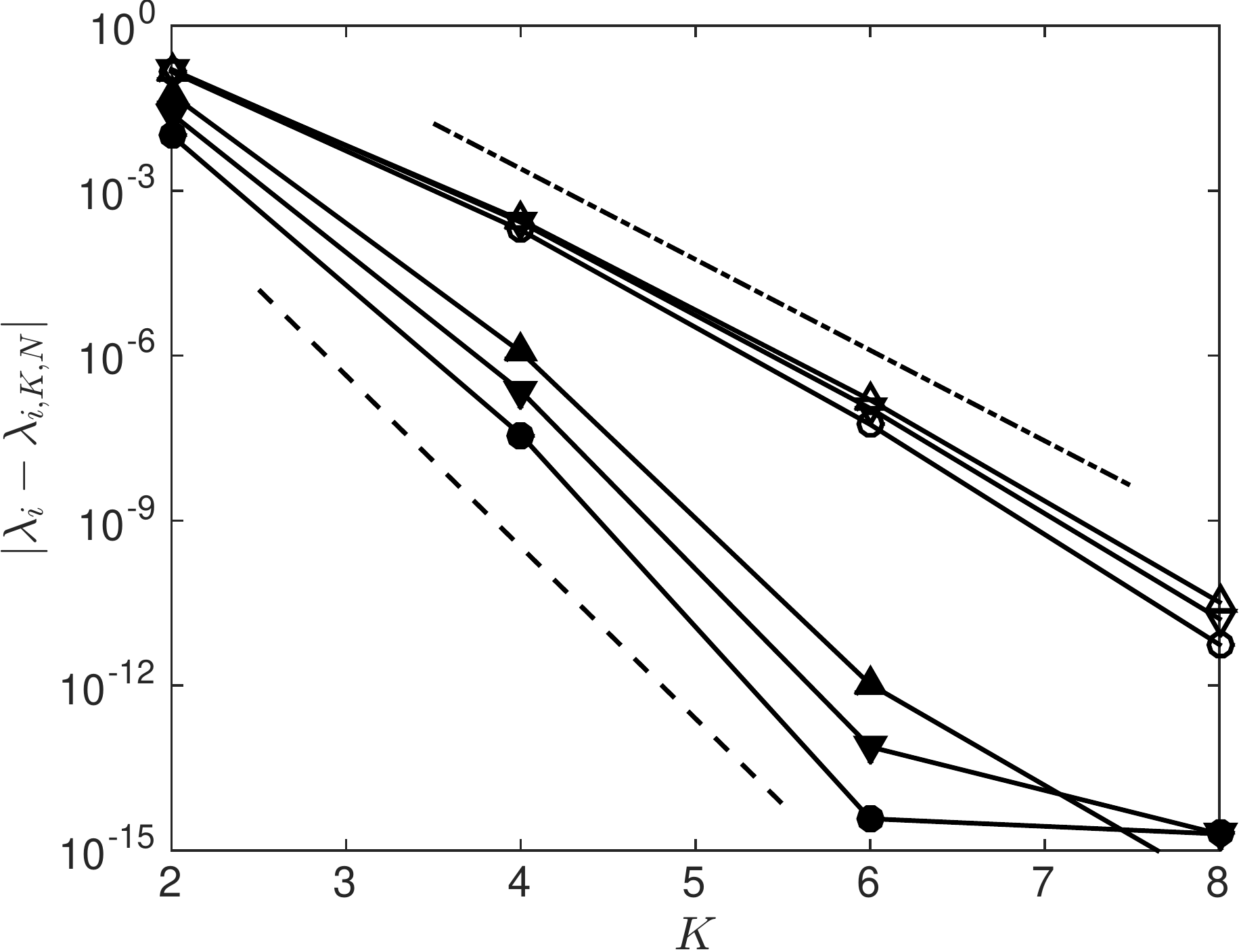}
\hspace*{\fill} (a). $\gamma=1/2,\,c=1/2$. \hspace*{\fill}
\end{minipage}\hspace*{\fill}%
\begin{minipage}[h]{0.35\textwidth}
\includegraphics[width=1\textwidth]{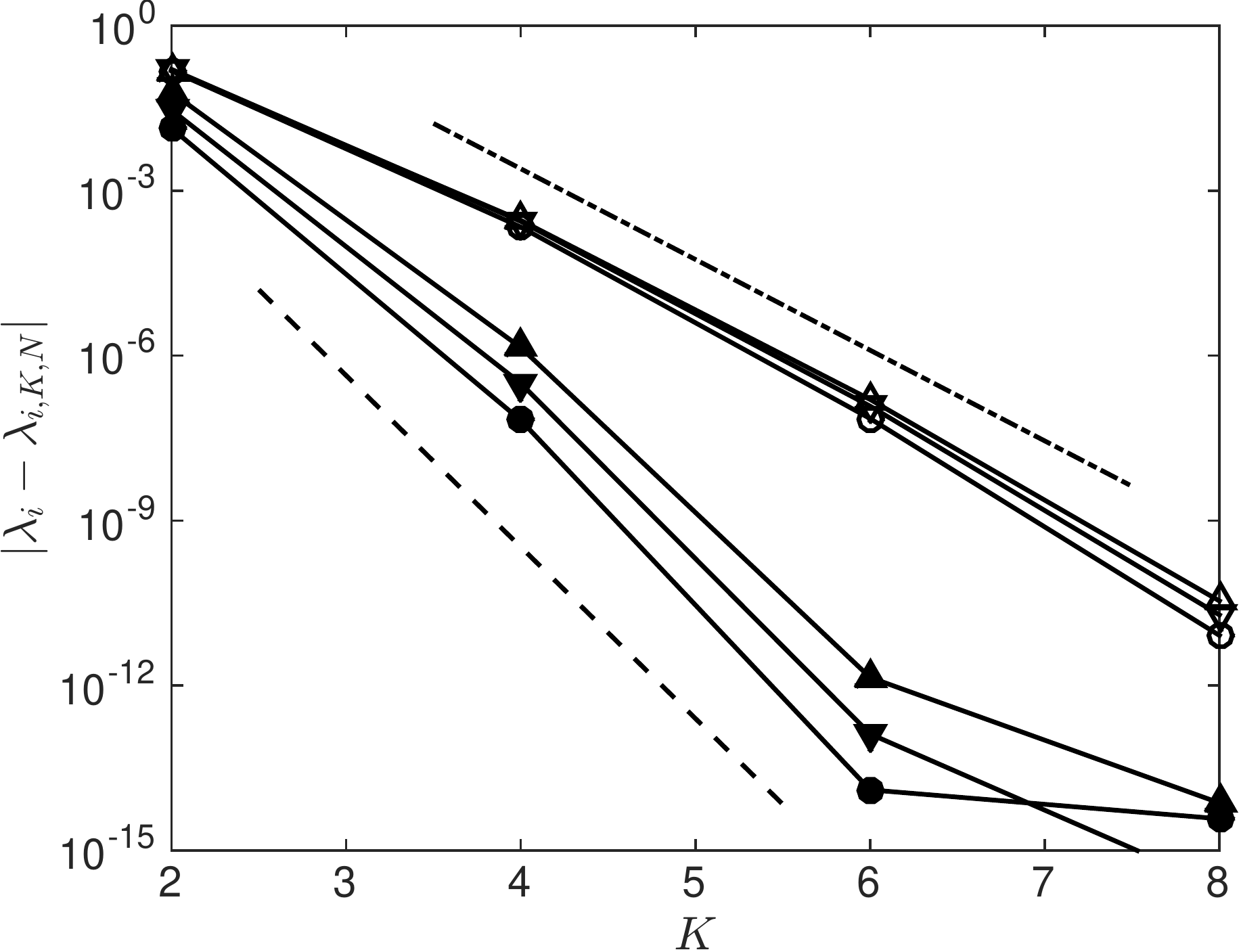}
\hspace*{\fill} (b). $\gamma=1/2,\,c=2/3$.  \hspace*{\fill}
\end{minipage}\hspace*{\fill}

\vspace*{0.5em}
\hspace*{\fill}
\begin{minipage}[h]{0.35\textwidth}
\includegraphics[width=1\textwidth]{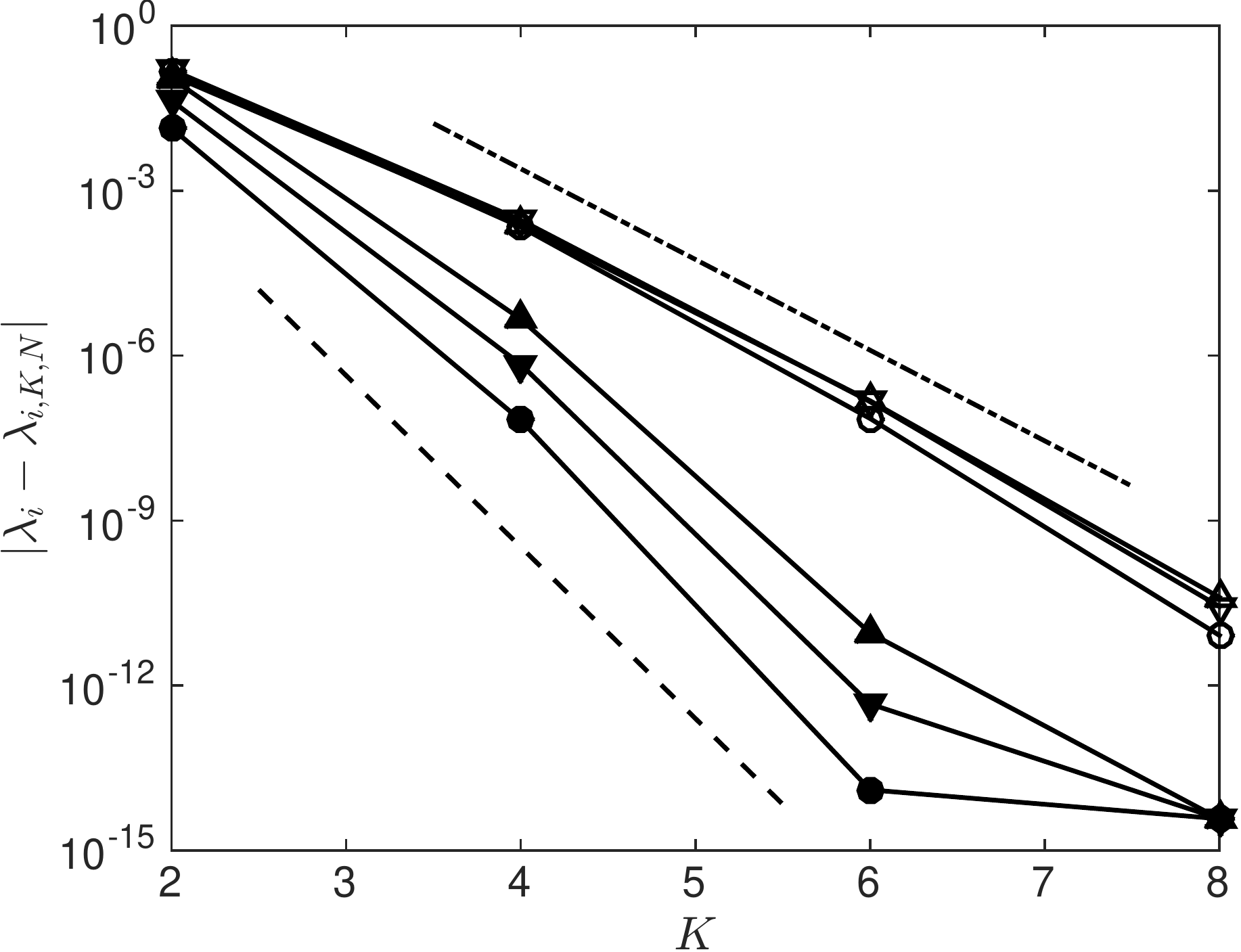}
\hspace*{\fill} (c). $\gamma=2/3,\,c=1/2$. \hspace*{\fill}
\end{minipage}\hspace*{\fill}%
\begin{minipage}[h]{0.35\textwidth}
\includegraphics[width=1\textwidth]{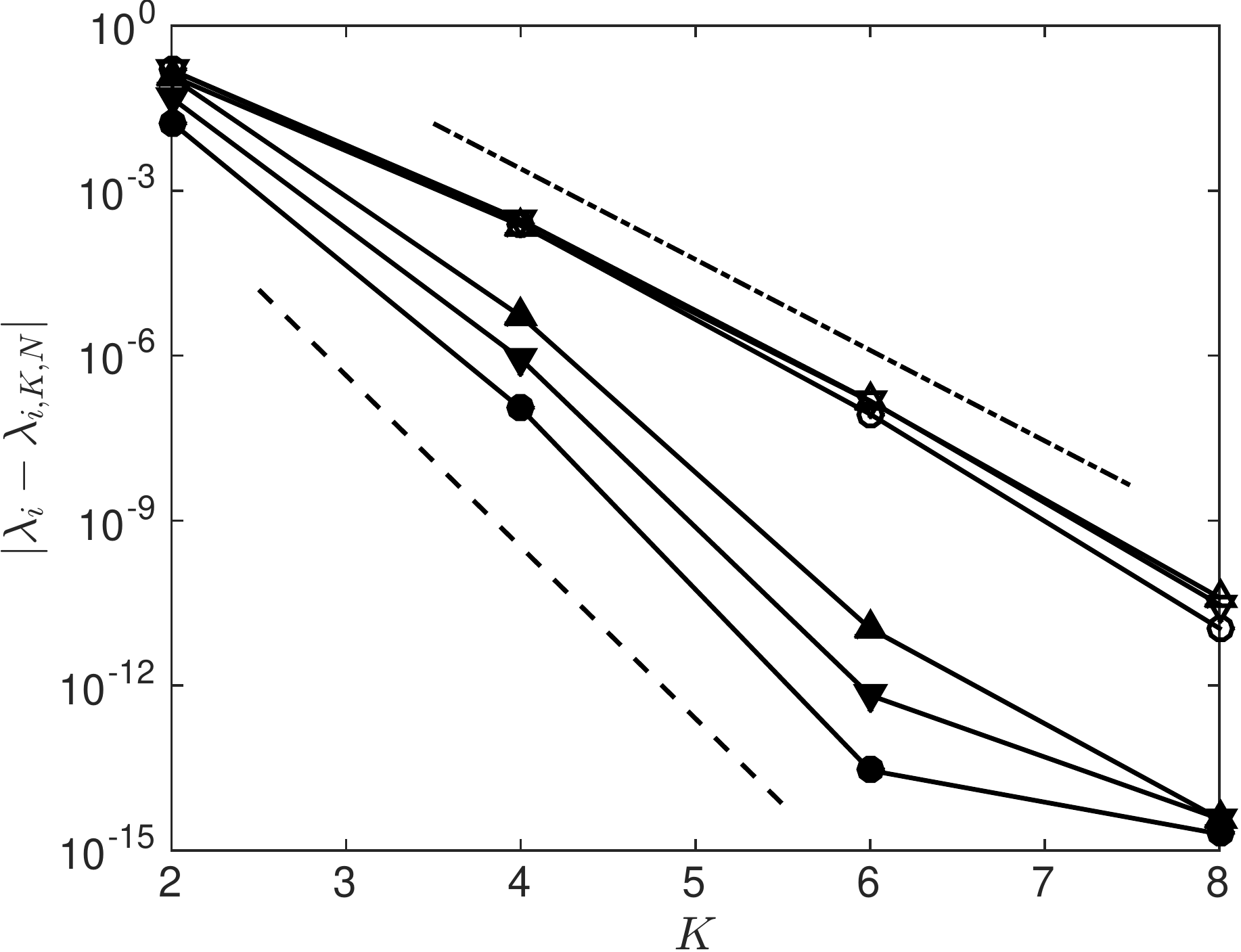}
\hspace*{\fill} (d). $\gamma=2/3,\,c=2/3$.  \hspace*{\fill}
\end{minipage}\hspace*{\fill}

\caption{Approximation errors $|\lambda_i-\lambda_{i,K,N}|$  ($\circ: \lambda_1$,  $\triangledown:\lambda_2$ and  $\vartriangle:\lambda_3$) versus $K$ on the circular sectors.
Method I (primitive markers) and Method II (filled markers).
The dash-dot  and  dashed lines are the reference exponential $y = 10^{-1.65 K+4}$
and $y=10^{-3.12 K + 3}$, respectively.}
\label{fig:SMsect}
\end{figure}

\section{Mortar spectral element methods}

The mortar element method uses nonconforming domain decomposition technique,
which allows to choose independently the discretization method on each sub-domain
 to adapt to the local behavior of the partial differential equation
\cite{BMR,CHQZ2007}.
For simplicity, we consider only mortar spectral element methods on  planar domains.

\subsection{Mortar spectral elements on a regular domain}
Let $\Omega$ be a bounded domain  $\RR^2$ such that
$(0,0)\in \Omega$ and  $(0,0) \notin \partial \Omega$.
We first use the
circle centered at the original with the radius  $R>0$,
$$\Gamma_{R} = \left\{ (x,y) = (R\cos\theta, R\sin\theta):  0\le \theta\le 2\pi  \right\}\subset \Omega\setminus\partial\Omega, $$
 to decompose  $\Omega$ into two subdomains
$$\Omega_0=\{(x,y)\in \RR^2:  x^2+y^2<R^2 \},
\quad
\Omega_1=\Omega\setminus\Omega_0.$$  For discretization, our main idea is to use a novel spectral
method on the disk $\Omega_0$  while use  the standard spectral element method on $\Omega_1$.

Let us take $\Omega=[-1,1]^2$ as an example to explain our idea.
For simplicity, we further  decompose $\Omega_1$ into four curvilinear quadrilaterals by the two diagonals of the square  as indicated in the left side of Figure \ref{fig:NCM},
\begin{align*}
&\Omega_1^{(1)} = \{(x,y)\in \RR^2:  x^2+y^2>R^2,\,  |y|< x < 1 \},
\\
&\Omega_1^{(2)} = \{(x,y)\in \RR^2:  x^2+y^2>R^2,\,  |x|< y < 1 \},
\\
&\Omega_1^{(3)} = \{(x,y)\in \RR^2:  x^2+y^2>R^2,\,  |y|< -x < 1 \},
\\
&\Omega_1^{(4)} = \{(x,y)\in \RR^2:  x^2+y^2>R^2,\,  |x|< -y < 1 \}.
\end{align*}
In such a way,  we have the non-overlapping partition $\Omega=\Omega_0\cup  \Omega_1^{(1)} \cup \Omega_1^{(2)}  \cup
\Omega_1^{(3)}\cup \Omega_1^{(4)}$, and  the interior edges of each $\Omega_1^{(\kappa)}$ ($1\le \kappa\le 4$)  are then perpendicular to the circle.

\begin{figure}[h!]
\hspace*{\fill}\begin{minipage}[h]{0.35\textwidth}
\includegraphics[width=1\textwidth]{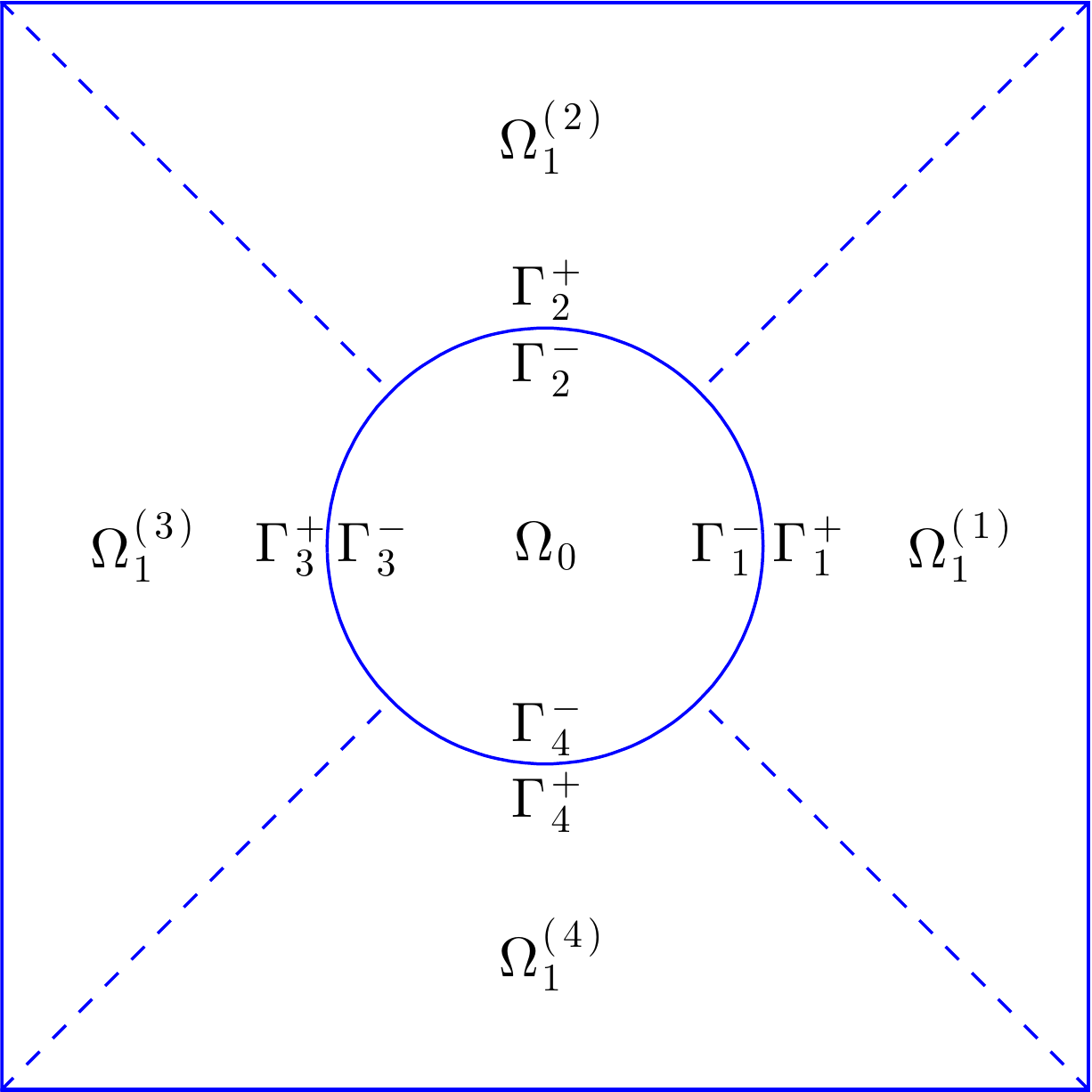}
\end{minipage}\hspace*{\fill}%
\hspace*{\fill}\begin{minipage}[h]{0.35\textwidth}
\includegraphics[width=1\textwidth]{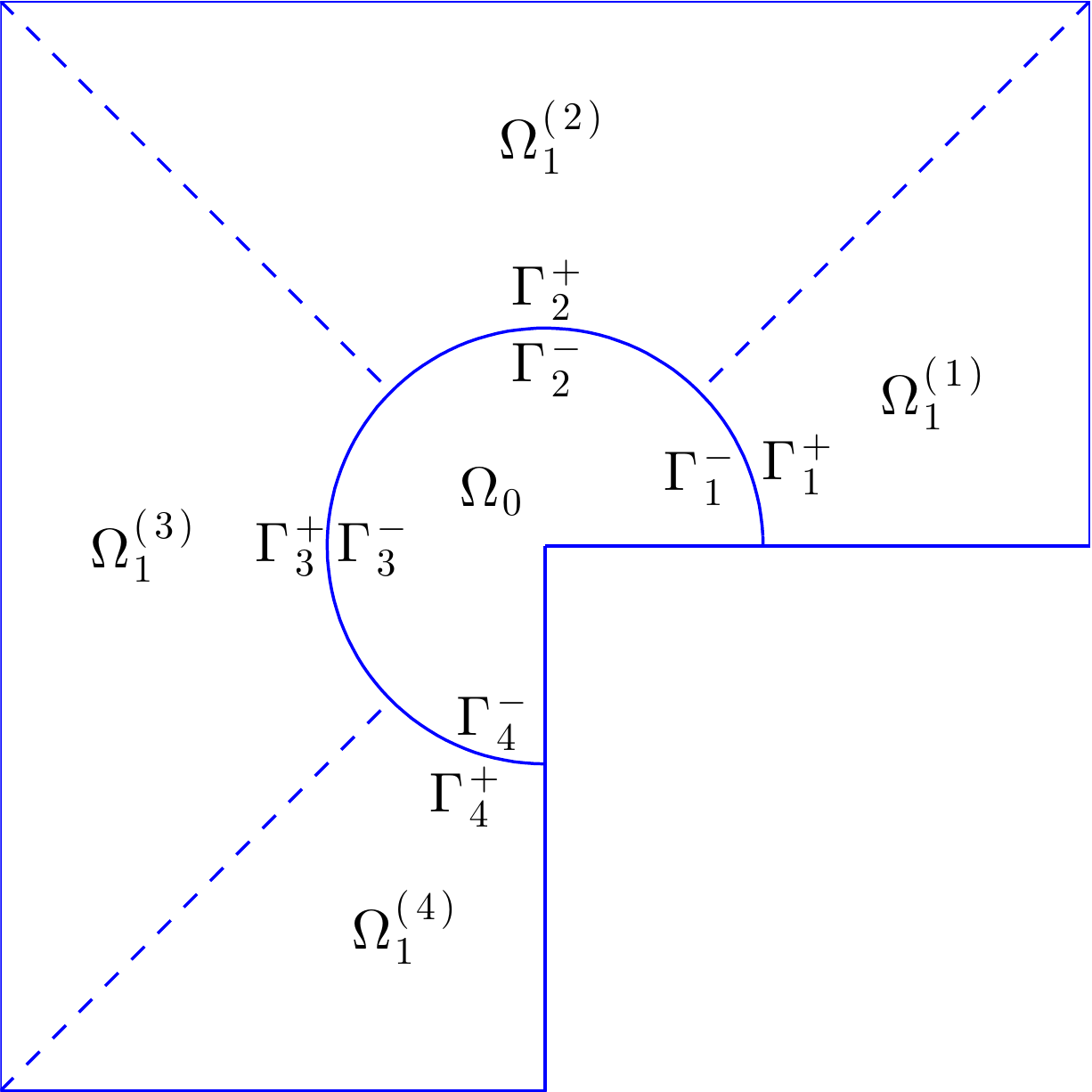}
\end{minipage}\hspace*{\fill}%

\caption{Non-conforming meshes for the mortar spectral method on a square (left) and
an L-shape domain (right).}
\label{fig:NCM}
\end{figure}

We now introduce the approximation space.
Denote $\delta=(K_0,N_0, K_1,N_1)$.
Let  $T_0: \mathbb{B}^2\mapsto \Omega_0$ such that $(x,y)=T_0(\xi,\eta)=(R\xi, R\eta)$, and define
the approximation space on $\Omega_0$,
\begin{align*}
X_{\delta,0}=
\spn\big\{ P_{k,\ell}^{n} \circ T_0^{-1}:  0 \le k\le K_0, \,  1\le \ell\le a_n^2,\, 0\le n \le N_0 \big\}.
\end{align*}
To introduce the approximation space on $\Omega_1$,
we first make the Gordon-Hall transformations $T_{\kappa}: [-1,1]^2\mapsto \Omega_1^{(\kappa)}$, $\kappa=1,2,3,4$, such that
\begin{align*}
(x+\i y)  =\i^{\kappa-1}\left[ \frac{1+\xi}2 (1+\i\eta) + \frac{1-\xi}{2} R \e^{\frac{\i\pi \eta }{4}}\right],
\end{align*}
or equivalently,
\begin{align}
\label{GHT0}
\begin{cases}
x = \dfrac{1+\xi}{2} \left[ \sin \dfrac{\pi \kappa}{2} + \eta \cos \dfrac{\pi \kappa}{2} \right]
+ \dfrac{1-\xi}{2}  R \sin \dfrac{\pi (\eta+2\kappa)}{4} ,
\\[0.5em]
y = \dfrac{1+\xi}{2} \left[\eta \sin \dfrac{\pi \kappa}{2} - \cos \dfrac{\pi \kappa}{2} \right]
- \dfrac{1-\xi}{2}  R \cos \dfrac{\pi (\eta+2\kappa)}{4}.
\end{cases}
\end{align}
Then we define the conforming approximation space on $\Omega_1$ as following,
\begin{align*}
&X_{\delta,1}=\spn\big\{
v\in H^{1}(\Omega_{1}):
  v|_{\Omega_{1}^{(\kappa)}} \circ T_{\kappa}\in X_{K_{1},N_{1}}, \kappa=1,2,3,4  \big\},
  \\
&X_{K_1,N_1}=\spn\left\{  v \in \mathbb{P}_{K_1}\times \mathbb{P}_{N_1}:   v(1,\eta)=0 \right\}.
\end{align*}
Finally, our mortar
approximation space on $\Omega$ is defined by
\begin{align*}
X_{\delta}=\spn\left\{ v: v|_{\Omega_{i}} \in X_{\delta,i}, i=0,1; \quad
( \gamma^{-}v - \gamma^{+}v,  \phi   )_{\Gamma_R} = 0,  \quad  \phi \in V_{\delta}
  \right\},
\end{align*}
where
the non mortar space $V_{\delta}$ is defined through  either    the trace  operator $\gamma^{+}: X_{\delta,1}\mapsto L^2(\Gamma_R)$   or $\gamma^{-}: X_{\delta,0}\mapsto L^2(\Gamma_R)$, i.e., $V_{\delta}\subseteq \{  \gamma^{+}v_{\delta}: v_{\delta}\in  X_{\delta,1}  \}$ or
 $ V_{\delta}\subseteq \{  \gamma^{-}v_{\delta}: v_{\delta}\in  X_{\delta,0}  \}$.
     Note that the matching condition $( \gamma^{-}v - \gamma^{+}v,  \phi   )_{\Gamma_R} = 0, \  \phi \in V_{\delta}$ is enforced to guarantee  information interchange between  $X_{\delta,1}$ and $X_{\delta,0}$,
     and here  we simply use
     $$ V_{\delta}= \spn\{  \gamma^{-}v_{\delta}: v_{\delta}\in  X_{\delta,0}  \}
     = \spn\left\{ \e^{\i n\theta }:    -N_0 \le n \le N_0 \right\}.$$

The mortar spectral element approximation scheme reads:  Find the eigenpairs
 $( \lambda_{\delta}, u_{\delta}) \in  \RR_+\times  X_{\delta}$ such that
\begin{align}
   \label{VarMSEM}
  & (\nabla u_{\delta}, \nabla v_{\delta})_{\Omega} +  (u_{\delta}, v_{\delta})_{r^{-2},\Omega} = \lambda_{\delta} (u_{\delta}, v_{\delta})_{\Omega}, \quad  v_{\delta}\in X_{\delta}.
\end{align}
If we remove the  matching condition in the definition of  $X_{\delta}$ to obtain
\begin{align*}
X_{\delta}^{*}=\left\{ v: v|_{\Omega_{i}} \in X_{\delta,i}, i=0,1  \right\},
\end{align*}
then \eqref{VarMSEM} is equivalent to the following one:
Find $( \lambda_{\delta}, u_{\delta},\psi_{\delta}) \in  \RR_+\times X_{\delta}^{*}\times V_{\delta}$ such that
\begin{align}
\label{SadMSEM}
\begin{split}
  &  (\nabla u_{\delta}, \nabla v_{\delta})_{\Omega} +  (u_{\delta}, v_{\delta})_{r^{-2},\Omega}
  +( \gamma^{-}v_{\delta} - \gamma^{+}v_{\delta},  \psi_{\delta}   )_{\Gamma_R}  = \lambda_{\delta} (u_{\delta}, v_{\delta})_{\Omega} , \quad  v_{\delta}\in X_{\delta}^{*},
   \\
&( \gamma^{-}u_{\delta} - \gamma^{+}u_{\delta},  \phi_{\delta}   )_{\Gamma_R} = 0,  \quad  \phi_{\delta} \in V_{\delta}.
\end{split}
\end{align}

Before concluding this  subsection,  we give some remarks on
the evaluation of the matrices of the reduced algebraic problems  in our mortar spectral element method.
The local stiffness matrix  associating $ (\nabla \cdot, \nabla \cdot)_{\Omega_0} +  (\cdot, \cdot)_{r^{-2},\Omega_0}$  and the  local  mass matrix associating
$(\cdot, \cdot)_{\Omega_0}$ can be easily obtained from
Lemma \ref{SOBP}.  While for the local stiffness and mass matrices on $\Omega_1^{(\kappa)}, \, (1\le \kappa\le 4)$,
we only need to consider the local matrices on $\Omega_1^{(1)}$ owing to the parity
of our symmetric partition.
We first use  the following basis functions for  $X_{K_1,N_1}$,
\begin{align*}
X_{K_1,N_1}=\left\{\phi_{k,n}(\xi,\eta):= \phi_k(\xi) \phi_n(\eta) :   0\le  k \le K_1,    1\le n \le N_1  \right\},
\end{align*}
where
\begin{align*}
   \phi_0(\zeta) = \frac{1+\zeta}{2}, \  \phi_1(\zeta) = \frac{1-\zeta}{2}, \quad  \phi_k(\zeta)
   %= \frac{L_k(\zeta)-L_{k-2}(\zeta)}{\sqrt{4k+2}}
   = J^{-1,-1}_k(\zeta), \ k\ge 2.
\end{align*}
Next, we note that the following differentiation relation under the Gordon-Hall mapping $T_1$,
\begin{align*}
 \widetilde\nabla:= \begin{bmatrix}
 \partial_{\xi}\\
 \partial_{\eta}
 \end{bmatrix}  = J_1 \begin{bmatrix}
 \partial_{x}\\
 \partial_{y}
 \end{bmatrix} = J_1 \nabla
 ,\qquad
\nabla = \begin{bmatrix}
 \partial_{x}\\
 \partial_{y}
 \end{bmatrix}  = J_1^{-1}   \begin{bmatrix}
 \partial_{\xi}\\
 \partial_{\eta}
 \end{bmatrix} = J_1^{-1}   \widetilde \nabla,
\end{align*}
where
%\begin{align*}
%J_{\kappa} = \begin{bmatrix}   \partial_{\xi} x & \partial_{\xi} y \\    \partial_{\eta} x & \partial_{\eta} y
%\end{bmatrix}
%= \begin{bmatrix}   \frac12 \left( \sin \frac{\pi \kappa}{2} + \eta \cos \frac{\pi \kappa}{2} \right) -\tfrac{R}{2}  \sin \tfrac{\pi (\eta + 2\kappa)  }{4}  &  \frac{1}2 \left[ \eta  \sin \frac{\pi \kappa}{2} -  \cos \frac{\pi \kappa}{2} \right] +\frac{R}{2}  \cos \frac{\pi (\eta + 2\kappa)  }{4}
%     \\[0.5em]
%\frac{1+\xi}{2}   \cos \frac{\pi \kappa}{2} + \frac{\pi R }{8} (1-\xi) \cos \frac{\pi (\eta+2\kappa)}{4}
%& \frac{1+\xi}{2}  \sin \frac{\pi \kappa}{2}  + \frac{ \pi R }{4} (1-\xi) \sin \frac{\pi (\eta+2\kappa)}{8}
%\end{bmatrix}.
%\end{align*}
%while other $J_{\kappa}$ and $|J_{\kappa}|$ for $\kappa=2,3,4$, are just a rotation of $J_1$ and $|J_1|$, respectively.
%In particular,
\begin{align*}
&J_{1} =  \begin{bmatrix}  \frac12-\frac{R}2 \cos\frac{\pi \eta}{4} &    \frac{\eta}2-\frac{R}2 \sin\frac{\pi \eta}{4} \\[0.5em]
\frac{\pi}{8}(\xi-1)  R\sin \frac{\pi \eta}{4} & \frac{\xi+1}{2} -\frac{\pi}{8} (\xi-1) R\cos\frac{\pi \eta}{4}
\end{bmatrix},
\\
&|J_{1}| = \det(J_1)=-\frac{(\pi+4)\xi+(4-\pi)}{16}R  \cos\frac{\pi \eta}{4}  -\frac{\pi \eta (\xi-1)}{16}R  \sin\frac{\pi \eta}{4}
+ \frac{\xi+1}{4} +\frac{R^2\pi (\xi-1)}{16}.
\end{align*}
Then  the  local mass matrix on $\Omega_1^{(1)}$ can be precisely expressed
by Bessel functions, while each entry of local stiffness matrix can be easily evaluated
through the  Gaussian quadratures on $[-1,1]^2$.
Moreover, since $\phi_{k,n}(-1, \eta) = \delta_{k,0} \phi_{n}(\eta)$, one readily finds that the matrix relating to
the mortar elements on $\Gamma_{R}$ can also be  formulated explicitly using  Bessel functions.

\subsection{Mortar spectral elements
 on a domain with reentrant corners}
 For simplicity, let us consider the L-shape domain and  suppose $\Omega=[-1,1]^2\setminus ( [0,1]\times[-1,0] )$
 such that the  point of the potential singularity is also the vertex of  the  reentrant corner. The computational domain $\Omega$
 is  partitioned using the same technique as in the previous subsection. More precisely,
 $\Omega= \Omega_0\cup \Omega_1$, $\Omega_1=\Omega_1^{(1)}\cup \Omega_1^{(2)} \cup  \Omega_1^{(3)} \cup \Omega_1^{(4)}$, and
 \begin{align*}
 & \Gamma_R = \{(x,y)= (R\cos\theta, R\sin\theta ):  0\le \theta \le \frac{3\pi}{2} \}\subset \Omega,
 \\
 & \Omega_0 =  \{(x,y)= (r\cos\theta, r\sin\theta ): 0\le r<R, \,   0\le \theta \le \frac{3\pi}{2} \},
 \\
&\Omega_1^{(1)} = \{(x,y)\in \RR^2:  x^2+y^2>R^2,\,  0<y< x < 1 \},
\\
&\Omega_1^{(2)} = \{(x,y)\in \RR^2:  x^2+y^2>R^2,\,  |x|< y < 1 \},
\\
&\Omega_1^{(3)} = \{(x,y)\in \RR^2:  x^2+y^2>R^2,\,  -1<x<-|y| \},
\\
&\Omega_1^{(4)} = \{(x,y)\in \RR^2:  x^2+y^2>R^2,\,  -1<y<x<0 \}.
\end{align*}
The Gordon-Hall mappings $T_{\kappa}: [-1,1]^{2}\mapsto \Omega_{1}^{(\kappa)}$
are the same as in \eqref{GHT0} for $\kappa=2,3$ and are defined as following for $\kappa=1,4$,
\begin{align*}
F_{1}:\quad
\begin{cases}
 x  =\tfrac{1+\xi}{2} + \tfrac{1-\xi}{2} R \cos \frac{\pi (\eta+1)}{8},
 \\
 y  =\tfrac{1+\xi}{2} \tfrac{1+\eta}{2}  + \tfrac{1-\xi}{2} R \sin \frac{\pi (\eta+1)}{8},
 \end{cases}
 \\
 F_{4}:\quad
\begin{cases}
x  =-\tfrac{1+\xi}{2} \tfrac{1+\eta}{2}  - \tfrac{1-\xi}{2} R \sin \frac{\pi (\eta+1)}{8},
\\
y  =-\tfrac{1+\xi}{2} - \tfrac{1-\xi}{2} R \cos \frac{\pi (\eta+1)}{8}.
 \end{cases}
\end{align*}

Denote $\delta=(K_0,N_0, K_1,N_1, K_2,N_2, K_3,N_4, K_4,N_4)$ and let $T_0: \mathbb{B}^2\mapsto \Omega_0$ be the same as in the former subsection.  We now define
the approximation spaces on $\Omega_0$ and $\Omega_1$,
\begin{align*}
X_{\delta,0}= \left\{ v:  v \circ  T_{0} \in V_{K_0,N_0}^{\Lambda} \right\} =
\spn\big\{ \Psi_{k}^{n}  \circ  T_{0}^{-1}:  1 \le k\le K_0, \, 0\le n \le N_0 \big\}.
\end{align*}
and
\begin{align*}
&X_{\delta,1}=\spn\big\{
v\in H^{1}(\Omega_{1}):
  v|_{\Omega_{1}^{(\kappa)}} \circ T^{-1}_{\kappa}\in X_{K_{\kappa},N_{\kappa}}, \kappa=1,2,3,4  \big\},
  \\
&X_{K_{\kappa},N_{\kappa}}=\spn\left\{  v \in \mathbb{P}_{K_{\kappa}}\times \mathbb{P}_{N_{\kappa}}:   v(1,\eta)=0 \right\}.
\end{align*}
Our mortar
approximation space on $\Omega$ is defined again  by
\begin{align*}
X_{\delta}=\spn\left\{ v: v|_{\Omega_{i}} \in X_{\delta,i}, i=0,1; \quad
( \gamma^{-}v - \gamma^{+}v,  \phi   )_{\Gamma_R} = 0,  \quad  \phi \in V_{\delta}
  \right\},
\end{align*}
where the non mortar space $V_{\delta}$ is chosen as
     $$ V_{\delta}= \spn\{  \gamma^{+}v_{\delta}: v_{\delta}\in  X_{\delta,1}  \}$$
     to ensure a spectrally high approximation accuracy on the $\Gamma_R$.

Now the mortar spectral element approximation scheme for \eqref{eigenP} on the L-shape domain
with a  singular corner exactly follows the formulas \eqref{VarMSEM} and \eqref{SadMSEM}.

At last, we conclude this subsection with the remark that one can readily extend our mortar spectral element
method  to solve   \eqref{eigenP}  with multiple singular potentials and  reentrant/obtuse  corners.

\subsection{Numerical experiments}
In this subsection, we shall show some numerical results on the mortar spectral element method (MSEM) for
\eqref{eigenP}.  To evaluate our method, we first introduce the $hp$-finite element method  using
geometric mesh (GFEM) by Gui, Guo and Babu\v{s}ka \cite{GuiBab86b,BabGuo96}  to handle corner/polar singularity
 of type  $r^{\alpha}$  in numerical PDE. This method
offers the best (exponential) convergence rate  among traditional methods for handling such kind of singularities.
The geometric mesh is characterized  by $n$ layers of conforming elements, in which the size
of  elements in the $i$-th layer, $h_i  =c q^{n-i}, \, 0<q<1,\,   1\le i\le n$,
%is proportional to  the distance    between the center of the element and the singular point,
and the  polynomial degree of  elements in the $i$-th layer,  $p_i$, is proportional to its layer number $i$.
In such a way, the mesh is refined as $n$ increases and
simultaneously the degrees of elements are increased too.

\begin{figure}[h!]
\hspace*{\fill}\begin{minipage}[h]{0.3\textwidth}
\includegraphics[width=1\textwidth]{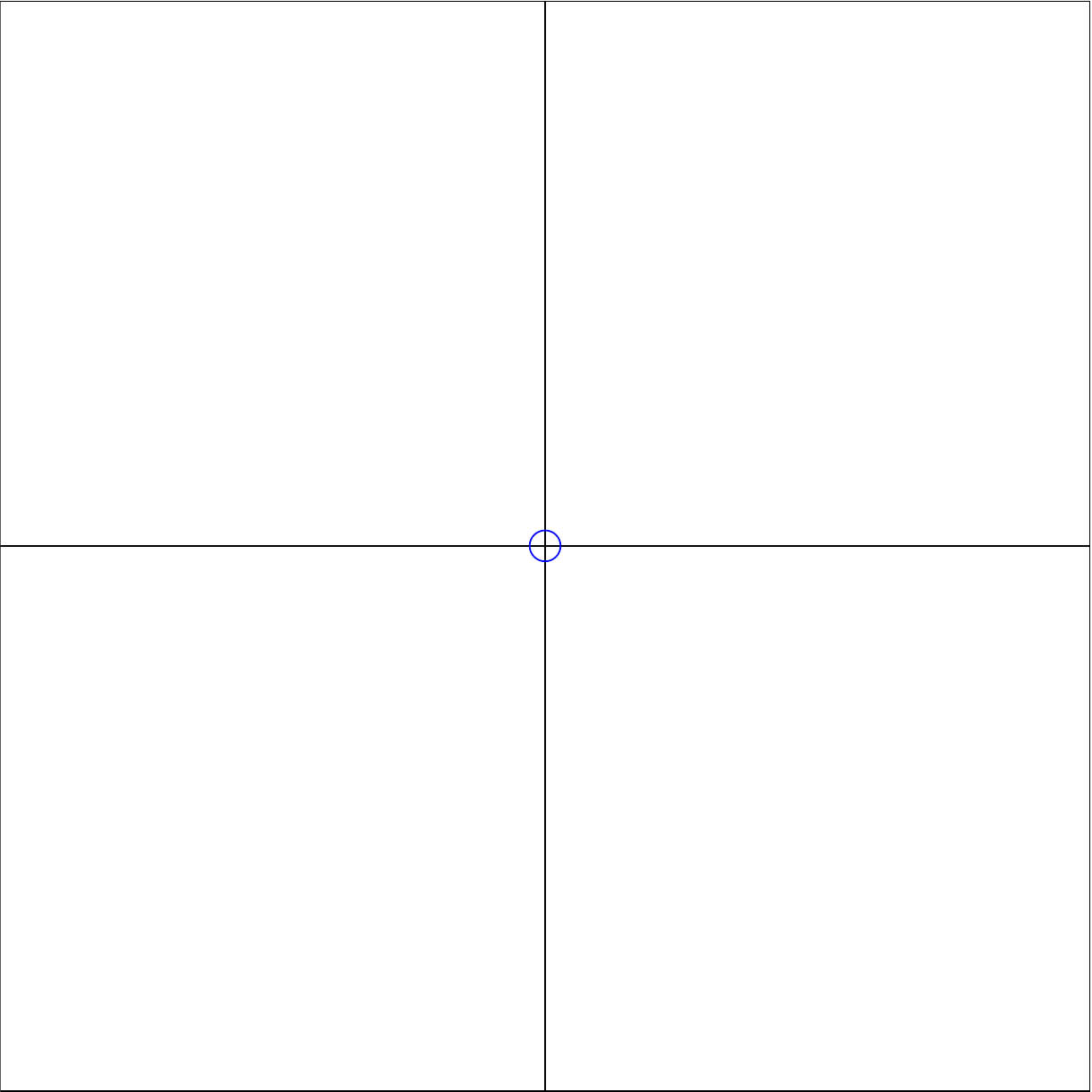}
\end{minipage}\hspace*{\fill}%
\hspace*{\fill}\begin{minipage}[h]{0.3\textwidth}
\includegraphics[width=1\textwidth]{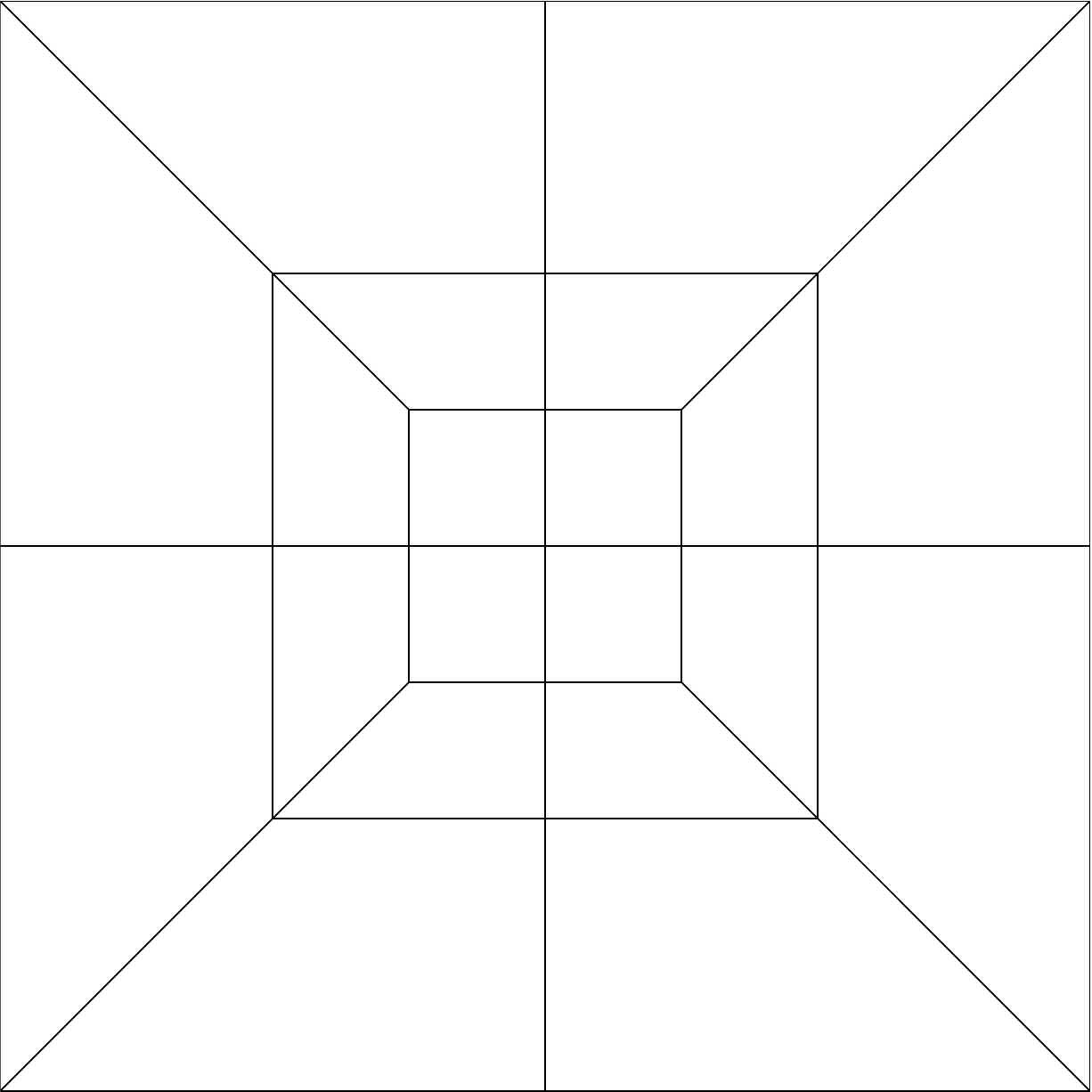}
\end{minipage}\hspace*{\fill}%
\hspace*{\fill}\begin{minipage}[h]{0.3\textwidth}
\includegraphics[width=1\textwidth]{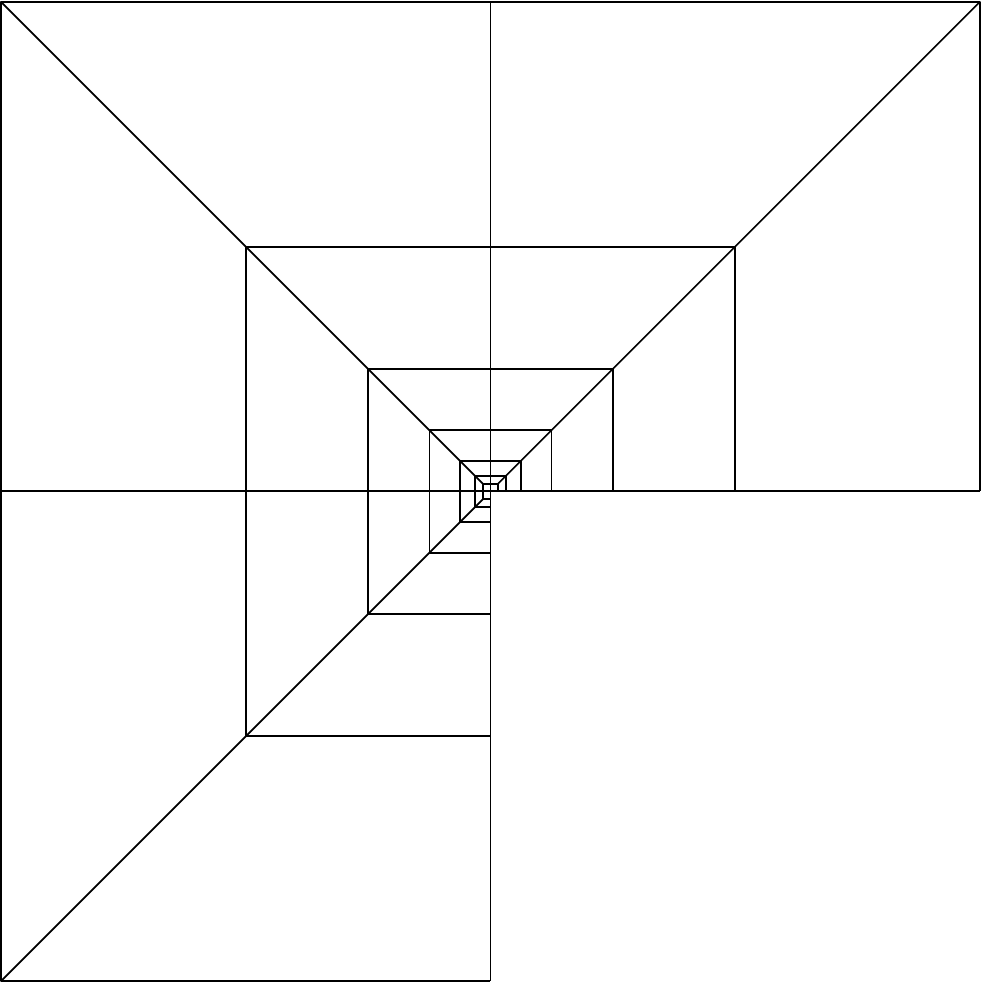}
\end{minipage}\hspace*{\fill}
\caption{The geometric meshes around the original on the square $\Omega=[-1,1]^2$ with  the mesh level
$n=1$ (left), $n=3$ (center) and on the L-shape domain $\Omega=[-1,1]^2\setminus([0,1]\times [-1,0])$ with  the mesh level  $n=7$ (right).}
\label{fig:GMSQ}
\end{figure}

\iffalse
\begin{figure}[h!]
\hspace*{\fill}\begin{minipage}[h]{0.3\textwidth}
\includegraphics[width=1\textwidth]{Lquadmsh0}
\end{minipage}\hspace*{\fill}%
\hspace*{\fill}\begin{minipage}[h]{0.3\textwidth}
\includegraphics[width=1\textwidth]{Lquadmsh2}
\end{minipage}\hspace*{\fill}%
\hspace*{\fill}%
\begin{minipage}[h]{0.3\textwidth}
\includegraphics[width=1\textwidth]{Lquadmsh}
\end{minipage}\hspace*{\fill}
\caption{The geometric meshes around the reentrant corner on the L-shape domain $\Omega=[-1,1]^2\setminus([0,1]\times [-1,0])$ with  the mesh level
$n=1$ (left), $n=3$ (center) and $n=7$ (right).}
\label{fig:GMSL}
\end{figure}
\fi

\paragraph{\bf Example 1}
We first  examine the Schr\"{o}dinger equation \eqref{eigenP} on $[-1,1]^2$  with $c=1/2$ and $c=2/3$. The reference eigenvalues are evaluated by  GFEM with  $n=17$, $q=(\sqrt{2}-1)^2$ and  $p_i=i, \, 1\le i\le n$,
to obtain a 15-digit precision with an optimal convergence order \cite{GuiBab86b}.
Numerical eigenvalues of  MSEM are computed with the parameters $R=0.3$ and $\delta=(10,14,17,18)$,
whose absolute errors are reported in the fourth columns of Table \ref{tab:MSEM1} and   Table \ref{tab:MSEM2}.
It can be easily observed that the error of the MSEM with the total degrees of freedom (DoF) $1539$ is  close to the machine precision, which reflect the  spectral accuracy of our MSEM method.

In comparison, we also introduce a spectral element method (SEM) with four rectangular subdomains with the
common vertex at the original.
This method is proposed from the standard spectral element method by enforcing the vanishing of all the basis functions
at the original, hence  is equivalent to the reduced GFEM with $n=1$.  Errors with the (separate) polynomial degree $p=61$ are then listed in the fifth columns of Table \ref{tab:MSEM1} and   Table \ref{tab:MSEM2}, where an obviously low accuracy is found, especially
for the first and the fifth smallest eigenvalues.
This indicates only a limited low order of convergence rate can  be obtained  in a classic spectral element discretization.

To take an insight of  the superiority of  MSEM to GFEM, we  further present
in the last columns of Table \ref{tab:MSEM1} and   Table \ref{tab:MSEM2}  the approximation errors of  GFEM
 using  a slightly larger degrees of freedom.  It is obvious that
our MSEM acquires an accuracy at least  6-digit higher than GFEM. Quantitatively,
let us compare the error plots of the 4 smallest MSEM eigenvalues (i.e., $|\lambda_i-\lambda_{i,\delta}|$ versus $\sqrt{DoF}$  in a semi logarithm scale) in Figure \ref{fig:QMSEM}   with  the error plots of  the GFEM eigenvalues (i.e., $|\lambda_i-\lambda_{i,\delta}|$ versus $\sqrt[3]{DoF}$) in Figure \ref{fig:QGFEM}. The plots reveal that MSEM converges asymptotically in  $\mathcal{O}\big(\exp({-\sigma_1  \sqrt{DoF}})\big)$ while
GFEM converges only in  $\mathcal{O}\big(\exp({-\sigma_2  \sqrt[3]{DoF}})\big)$ with $\sigma_2$ varying from case to case.  More importantly, using generic local basis functions in the function space which the eigenfunctions belong to,  MSEM characterizes the underlying  singularities perfectly  and  approximates consecutive eigenvalues (together with the corresponding eigenfunctions) with an almost uniform convergence rate.
While GFEM mimics the singular solution by a balance between local mesh sizes and local polynomial degrees,
and can only remove part of the singularities. Thus it approximates  consecutive eigenvalues
with different convergence rates whenever their associate eigenfunctions possess  different orders of singularities.

\begin{table}[h!]
\caption{The  reference values of the first 6 eigenvalues on $[-1,1]^2$ for $c=1/2$;  the errors  of
MSEM  with  $DoF=1539$,
 of  SEM with $DoF=14640$, and of the GFEM  with $n=8$ and  $DoF=1624$.
}

\hspace*{\fill}
\begin{tabular}{c||c|c|c|c|c}
 No. &    Ref.   &  Mul.  &  MSEM &  SEM  &  GFEM \\ \hline
 1 &    8.37681498711058 & 1 & 5.3291e-15 &    1.1417e-03  &  7.9985e-6 \\ \hline
 2 &  13.35313963139164 & 2 & 8.8818e-15 &    2.7979e-08  & 6.4234e-9 \\ \hline
 3 &  20.33106215893244 & 1 &  3.5527e-15   &    2.4869e-14 & 3.3054e-8 \\ \hline
 4  & 25.42501776089188 & 1 &   4.9738e-14 &    1.8474e-13  & 1.8657e-8 \\ \hline
 5 &  30.86901223422695 & 1 &   3.3040e-13 &    3.3095e-03 & 2.2988e-5
   \\ \hline
 6 & 32.83995595781530 & 2 & 3.5527e-14 &    2.6943e-08 & 1.2435e-5
\end{tabular}
\hspace*{\fill}
\vspace*{0.5em}
\label{tab:MSEM1}
\end{table}

\begin{table}[h!]
\caption{The  reference values of the first 6 eigenvalues on $[-1,1]^2$ for $c=2/3$;  the errors  of
MSEM  with  $DoF=1539$,
 of  SEM with $DoF=14640$, and of  GFEM  with $n=8$ and  $DoF=1624$.
}
\hspace*{\fill}
\begin{tabular}{c||c|c|c|c|c}
 No. &   Ref.  &  Mul.  &  MSEM & SEM & GFEM \\ \hline
 1 &   9.65231567885163 & 1 & 1.7764e-15  &   8.9349e-5  & 2.4365e-7\\ \hline
 2 &  14.0914338712714 & 2 & 1.0658e-14   &   3.1005e-8  & 1.1318e-8 \\ \hline
 3 &  20.7838715370525 & 1 &  2.4869e-14  &   7.8160e-14  & 3.5321e-8 \\ \hline
 4 &  25.9999831911128   &1   &  7.1054e-14 &  6.7502e-14 & 2.1666e-8\\ \hline
 5 & 32.8581767543383 &  1 &   7.8160e-14 &   2.7518e-04 & 8.0361e-7
   \\ \hline
 6 & 33.3937111616692 & 2 & 1.4211e-14  &   2.9763e-08 & 1.5920e-6
\end{tabular}
\hspace*{\fill}
\vspace*{0.5em}
\label{tab:MSEM2}
\end{table}

\begin{figure}[h!]
\hspace*{\fill}
\begin{minipage}[h]{0.35\textwidth}
\includegraphics[width=1\textwidth]{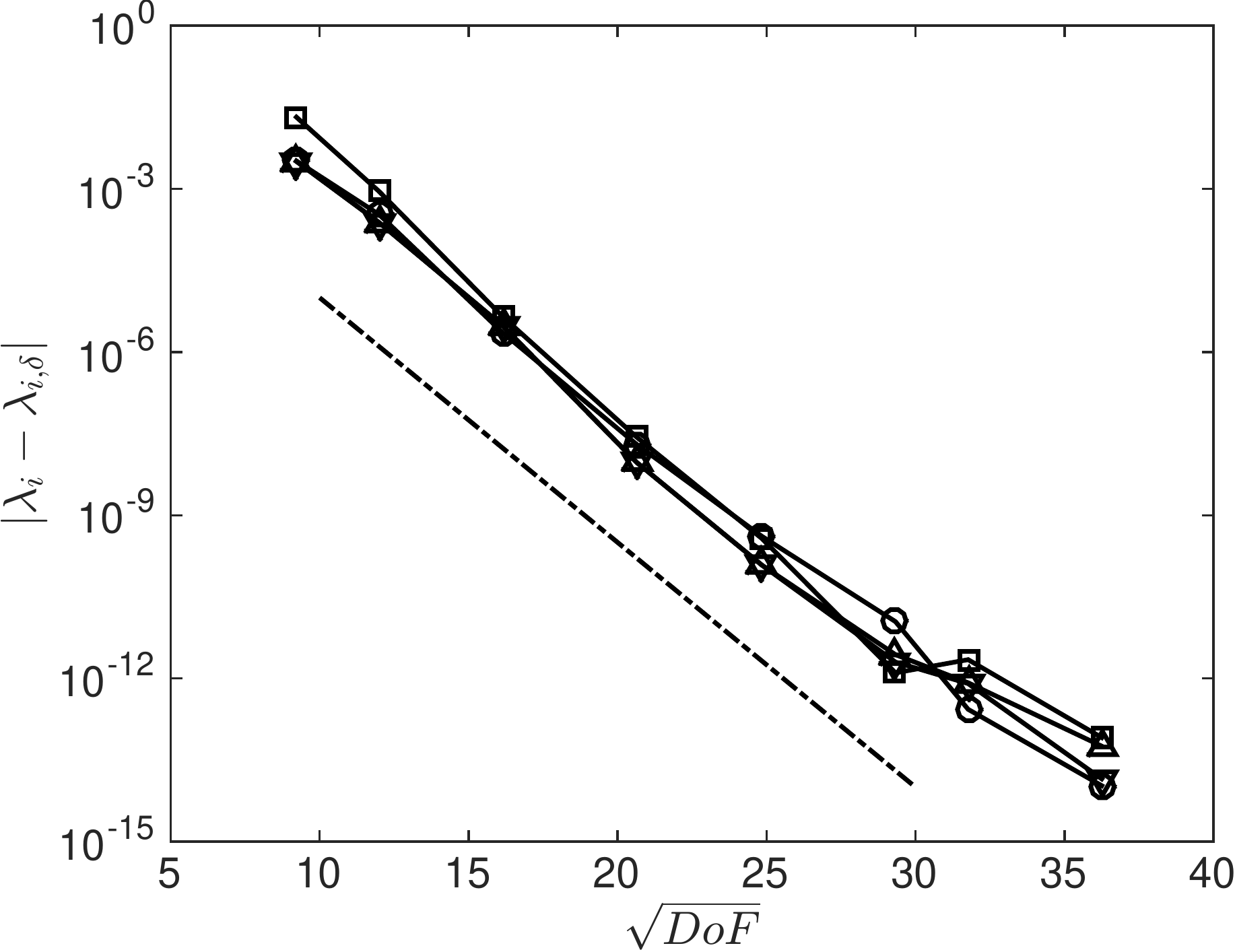}
\hspace*{\fill} (a). $c=1/2$. \hspace*{\fill}
\end{minipage}\hspace*{\fill}%
\begin{minipage}[h]{0.35\textwidth}
\includegraphics[width=1\textwidth]{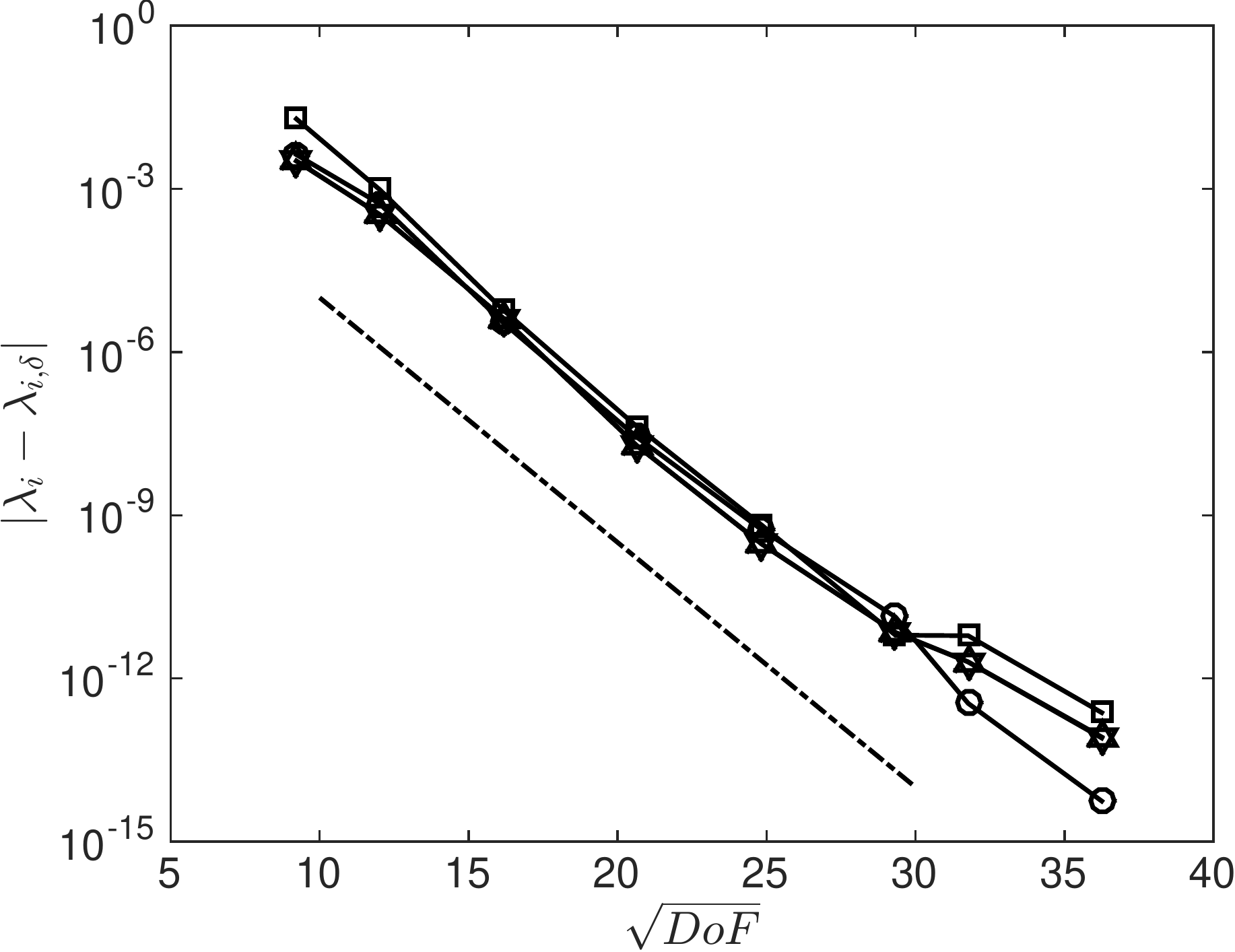}
\hspace*{\fill} (b). $c=2/3$.  \hspace*{\fill}
\end{minipage}\hspace*{\fill}%
\caption{MSEM approximation errors $|\lambda_i-\lambda_{i,\delta}|$  ($\circ: \lambda_1$,  $\triangledown:\lambda_2$, $\vartriangle:\lambda_3$ and  $ \square: \lambda_4$)  versus $\sqrt{DoF}$ on the square $[-1,1]^2$.
The dash-dot lines are the reference exponential $y = 10^{-0.45 \sqrt{DoF}-0.5 }$.}
\label{fig:QMSEM}
\end{figure}

\begin{figure}[h!]
\hspace*{\fill}
\begin{minipage}[h]{0.35\textwidth}
\includegraphics[width=1\textwidth]{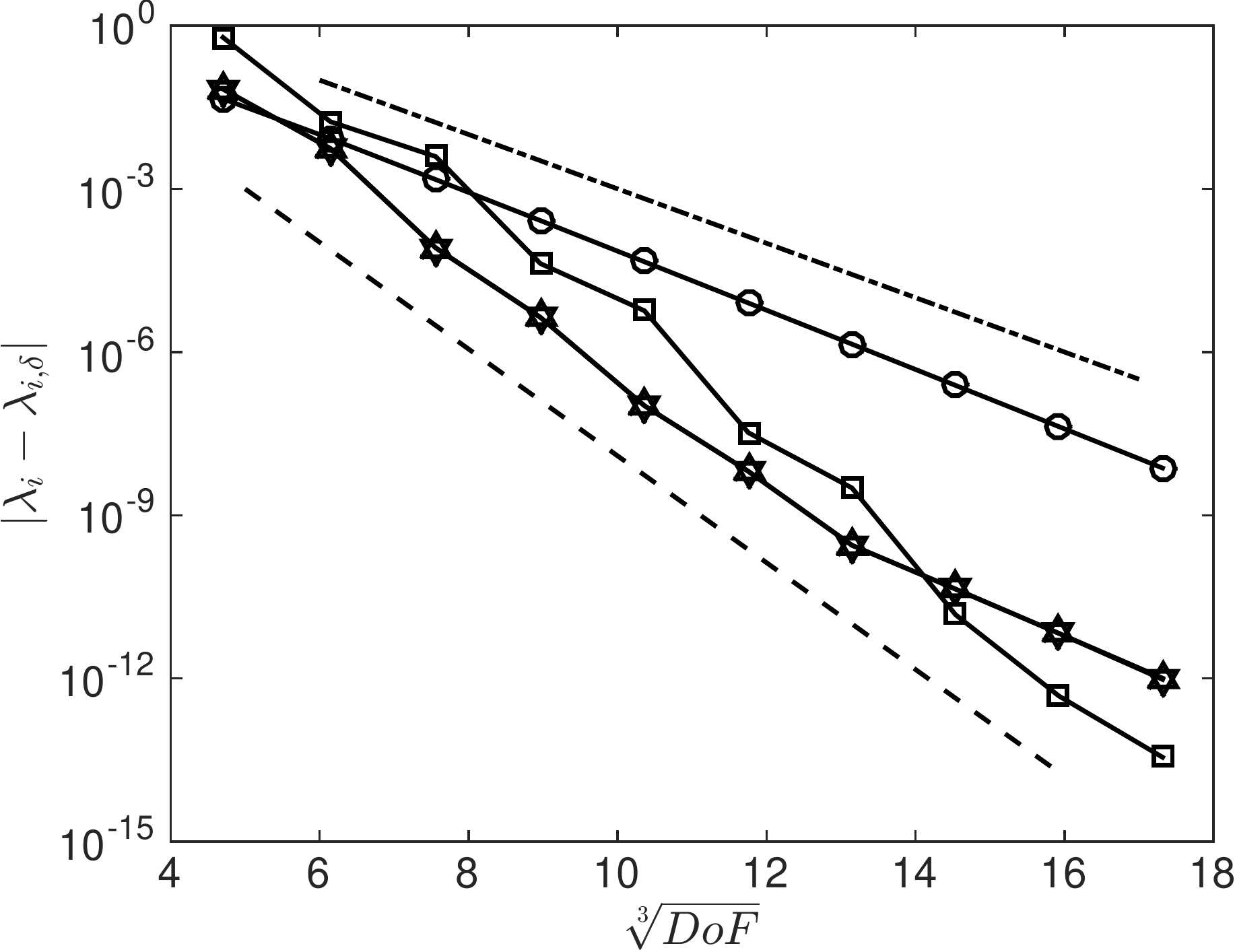}
\hspace*{\fill} (a). $c=1/2$. \hspace*{\fill}
\end{minipage}\hspace*{\fill}%
\begin{minipage}[h]{0.35\textwidth}
\includegraphics[width=1\textwidth]{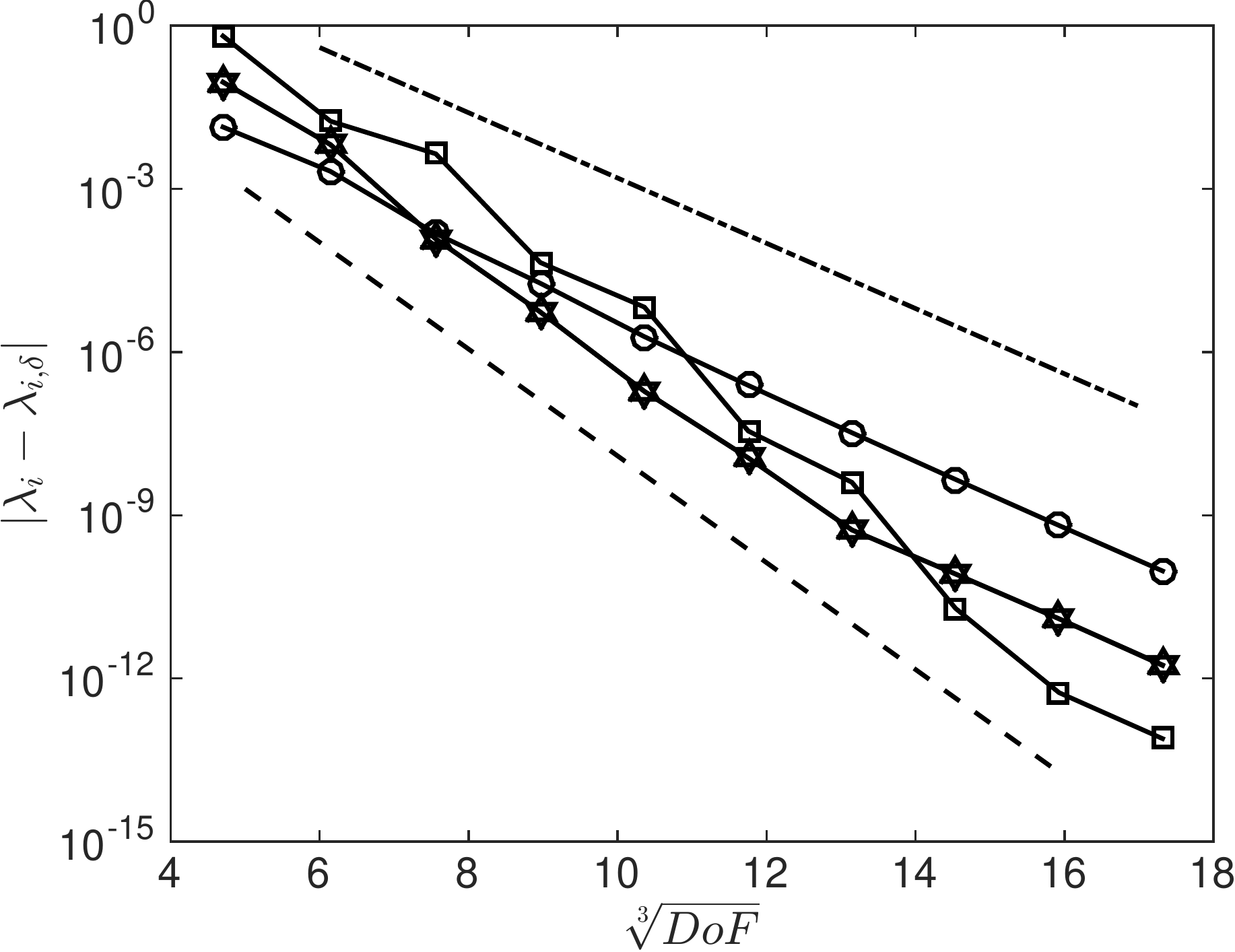}
\hspace*{\fill} (b). $c=2/3$. \hspace*{\fill}
\end{minipage}\hspace*{\fill}%
\caption{GFEM approximation errors $|\lambda_i-\lambda_{i,\delta}|$  ($\circ: \lambda_1$,  $\triangledown:\lambda_2$, $\vartriangle:\lambda_3$ and  $ \square: \lambda_4$) versus $\sqrt[3]{DoF}$ on the square $[-1,1]^2$.
 The  dashed  lines are $y = 10^{-0.98 \sqrt[3]{DoF}+1.9 }$;  the dash-dot lines are  (a). $y = 10^{-0.5 \sqrt[3]{DoF}+2 }$
and (b). $y = 10^{-0.6 \sqrt[3]{DoF}+3 }$.}
 \label{fig:QGFEM}
\end{figure}

\iffalse
\begin{figure}[h!]
\hspace*{\fill}
\begin{minipage}[h]{0.35\textwidth}
\includegraphics[width=1\textwidth]{sq0}
\hspace*{\fill} (a). MSEM. \hspace*{\fill}
\end{minipage}\hspace*{\fill}%
\begin{minipage}[h]{0.35\textwidth}
\includegraphics[width=1\textwidth]{smsq}
\hspace*{\fill} (b). SM. \hspace*{\fill}
\end{minipage}\hspace*{\fill}%
\caption{Convergence history $[-1,1]^2$ versus $x=\sqrt{DoF}$ for  the first 4 Laplacian eigenvalues.
The dashed lines are the reference exponentials $y = 10^{-0.8 x+3 }$and $y=10^{-1.5x+11}$.}
\end{figure}

\vspace*{-1em}
\paragraph{\bf Example 2.}
Next, the extreme case  $c=0$ is considered, which is reduced to the Laplacian eigenvalue problem without any
corner/polar singularities thus can be efficiently solved by classic spectral method (SM).
For $\Omega=[-1,1]^2$, all of its eigenvalues and eigenfunctions are known,
$$
\lambda_{j,k} = \frac{(j^2+k^2)\pi^2}{4}, \quad u_{j,k}(x,y) = \sin\frac{j\pi (x+1)}{2} \sin\frac{k\pi (y+1)}{2}, \qquad j,k\ge 1.
$$
In Figure \ref{fig:MSEM3}, we plot the errors of our MSEM and the classic SM versus degrees of freedom used.

{ (to be continued).}

\fi

\vspace*{-1.5em}
\paragraph{\bf Example 2}
Further, let us examine the MSEM for solving \eqref{eigenP}  on the L-shape domain.
Once again, the reference eigenvalues are given by the GFEM using a $17$-level geometric mesh with
 $q=(\sqrt{2}-1)^2$ and  $p_i=i$ on each subdomain of level $i$, which amounts to
a high degrees of freedom $10552$.
In Table \ref{tab:LLap}, absolute errors  (for $c=0$) are given in the third column for  the MSEM eigenvalues, which  are evaluated using the discretization parameters $R=0.5$
and $\delta=(\{17,20\}, \{15,9\}, \{15,18\}^{2}, \{15,9\})$ with the total  degrees of freedom $1152$.
As a comparison, approximation errors for SEM (GFEM with $n=1$ and $p=64$) with the degrees of freedom $12033$ and   GFEM ($n=8$)  with the degree of freedom $1182$
are listed in the subsequent columns, respectively.
The defect of the SEM  for \eqref{eigenP} on the L-shape domain  is as obvious as before;
while the  difference in error between the GFEM and our MSEM  is astonishing, and MSEM
is several-digit superior to GFEM in accuracy.

Moreover, we utilize  the MATLAB code of the modified method of particular solution (MMPS)
provided by Betcke and Trefethen \cite{BT05} for solving the Laplacian eigenvalue problem on the L-shape domain
with $N=72$ particular solutions (trial functions), $2N$ equally distributed boundary points and $2N$ randomly  distributed interior points. Approximation errors  are then  reported  in the last column.
This shows that our MSEM  is also superior to MMPS for small eigenvalues, although the latter is
specifically  designed for evaluating  Laplacian eigenvalues on  a polygon domain with reentrant corners.

To have a fair comparison, a few issues need to be addressed.
The method of particular solution (MPS) starts with various solutions of the eigenvalue equation for a given $\lambda$, and then vary $\lambda$ until one can find a linear combination of such solutions that satisfies the boundary condition at a number of sample points along the boundary.
 improves/revives  MPS (mainly in stability) by restricting the set of admissible functions to functions that are bounded away from zero in the interior. 
In practice this idea is realized by minimizing the angle $\phi(\lambda)$ between the space of functions that satisfy the eigenvalue equation and the space of functions that are zero on the boundary.  The advantage of  MMPS  is  that  it can  acquire an  exponential rate of convergence  by  using a small number of trial functions.
However, its convergence  can not always be  guaranteed due to  its 
nearly singular matrix resulted.  Futhermore,   MMPS is not capable of  distinguishing  multiple eigenvalues from simple eigenvalues. More  importantly, targetting  at  eigenvalue problems, MMPS  can not be directly  applicable for solving source problems as freely as our  variationaly formulated methods.

Once again, we plot the absolute errors  in Figure \ref{fig:LMSEM} and Figure \ref{fig:LGFEM} which clearly show that MSEM converges
asymptotically in  $\mathcal{O}\big(\exp({-\sigma_1  \sqrt{DoF}})\big)$ while
GFEM  converges only in  $\mathcal{O}\big(\exp({-\sigma_2  \sqrt[3]{DoF}})\big)$  with $\sigma_2$ varying possibly  from case to case. One readily observes that the convergence rates of MSEM for consecutive eigenvalues are  almost uniform, while the convergence rate of GFEM  varies depending on the singularities of the corresponding eigenfunctions. These phenomenon confirm the superiority of MSEM to GFEM.

\begin{table}[h!]
\caption{Reference values (column 2)  of the first 10 Dirichlet Laplacian  eigenvalues on $[-1,1]^{2}\setminus  ([0,1]\times [-1,0])$, and the approximation  errors of MSEM
with $DoF= 1152$, of SEM with $DoF=12033$, of GFEM with $n=8$ and $ DoF=1182$, and of MMPS.}
\hspace*{\fill}
\begin{tabular}{c||c|c|c|c|c}
 No. &   Ref. & MSEM & SEM  & GFEM  & MMPS\\ \hline
 1 & 9.639723844021988 &  1.7763e-14 &    3.9237e-05 &2.3513e-7 & 1.2189e-11 \\ \hline
 2 & 15.197251926454335 & 7.9936e-14 &    1.8547e-09 & 1.6818e-8 & 1.8474e-13  \\ \hline
 3 & 19.739208802178716 & 2.6645e-13   &    2.1316e-14 &  3.0391e-8 & 5.6843e-14 \\ \hline
 4 & 29.521481114144805 & 6.6791e-13 &    7.3931e-10  & 2.6937e-7 & 8.6407e-08 \\ \hline
 5 & 31.912635957137759 & 7.0663e-12 &    9.5822e-05 & 7.1421e-7 & 6.7748e-12 \\ \hline
 6 & 41.474509890214925 & 3.5782e-10 &    7.2059e-05  & 8.1881e-6 &5.5848e-10 \\ \hline
 7 & 44.948487781351275 & 1.1535e-09 &    9.5454e-09 & 2.5653e-5 &7.1765e-13  \\ \hline
 8 & 49.348022005446765 & 1.2818e-09 &    4.9738e-14  &1.3954e-5  &3.3040e-12  \\ \hline
 9 & 49.348022005446765 &  1.6727e-09 &    1.2079e-13  &1.4287e-4 &3.4888e-12 \\ \hline
 10 & 56.709609887385042 & 4.0229e-09 &    8.0445e-05  &5.1652e-4 &4.6896e-13
\end{tabular} \hspace*{\fill}
\label{tab:LLap}
\end{table}

\iffalse
\begin{figure}[h!]
\hspace*{\fill}
\begin{minipage}[h]{0.32\textwidth}
\includegraphics[width=1\textwidth]{ml0}
\hspace*{\fill} (a).  $c=0$.\hspace*{\fill}
\end{minipage}\hspace*{\fill}%
\begin{minipage}[h]{0.32\textwidth}
\includegraphics[width=1\textwidth]{ml12}
\hspace*{\fill} (b).  $c=1/2$.\hspace*{\fill}
\end{minipage}\hspace*{\fill}%
\begin{minipage}[h]{0.32\textwidth}
\includegraphics[width=1\textwidth]{ml23}
\hspace*{\fill} (c).  $c=2/3$.\hspace*{\fill}
\end{minipage}\hspace*{\fill}%
\caption{MMPS approximation errors $|\lambda_i-\lambda_{i,N}|$  ($\circ: \lambda_1$,  $\triangledown:\lambda_2$, $\vartriangle:\lambda_3$ and  $ \square: \lambda_4$)  versus $N$ on the L-shape domain $[-1,1]^2\setminus ( [0,1]\times[-1,0] )$.}
\label{fig:MMPS}
\end{figure}
\fi

\begin{figure}[h!]
\hspace*{\fill}
\begin{minipage}[h]{0.32\textwidth}
\includegraphics[width=1\textwidth]{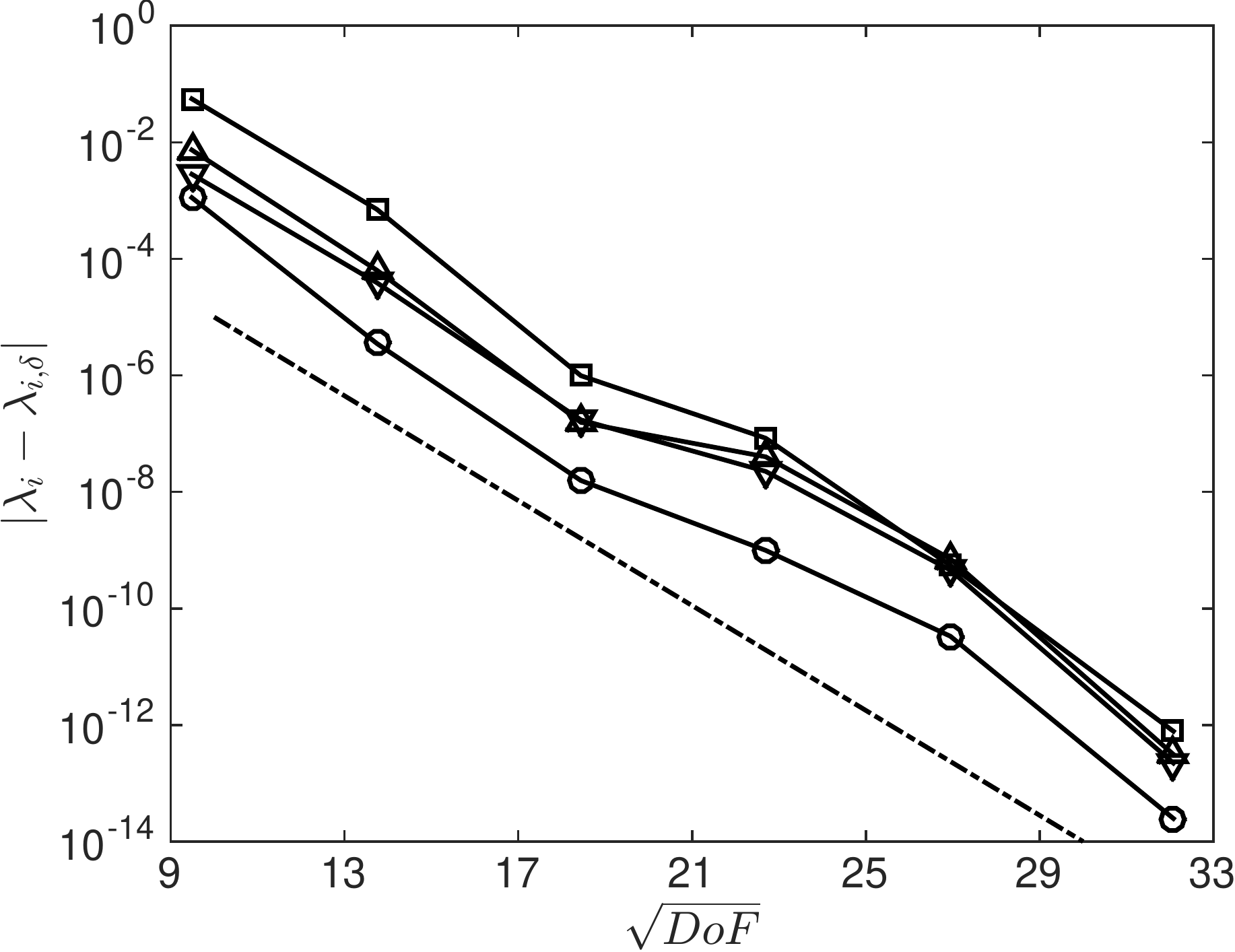}
\hspace*{\fill} (a).  $c=0$.\hspace*{\fill}
\end{minipage}\hspace*{\fill}%
\begin{minipage}[h]{0.32\textwidth}
\includegraphics[width=1\textwidth]{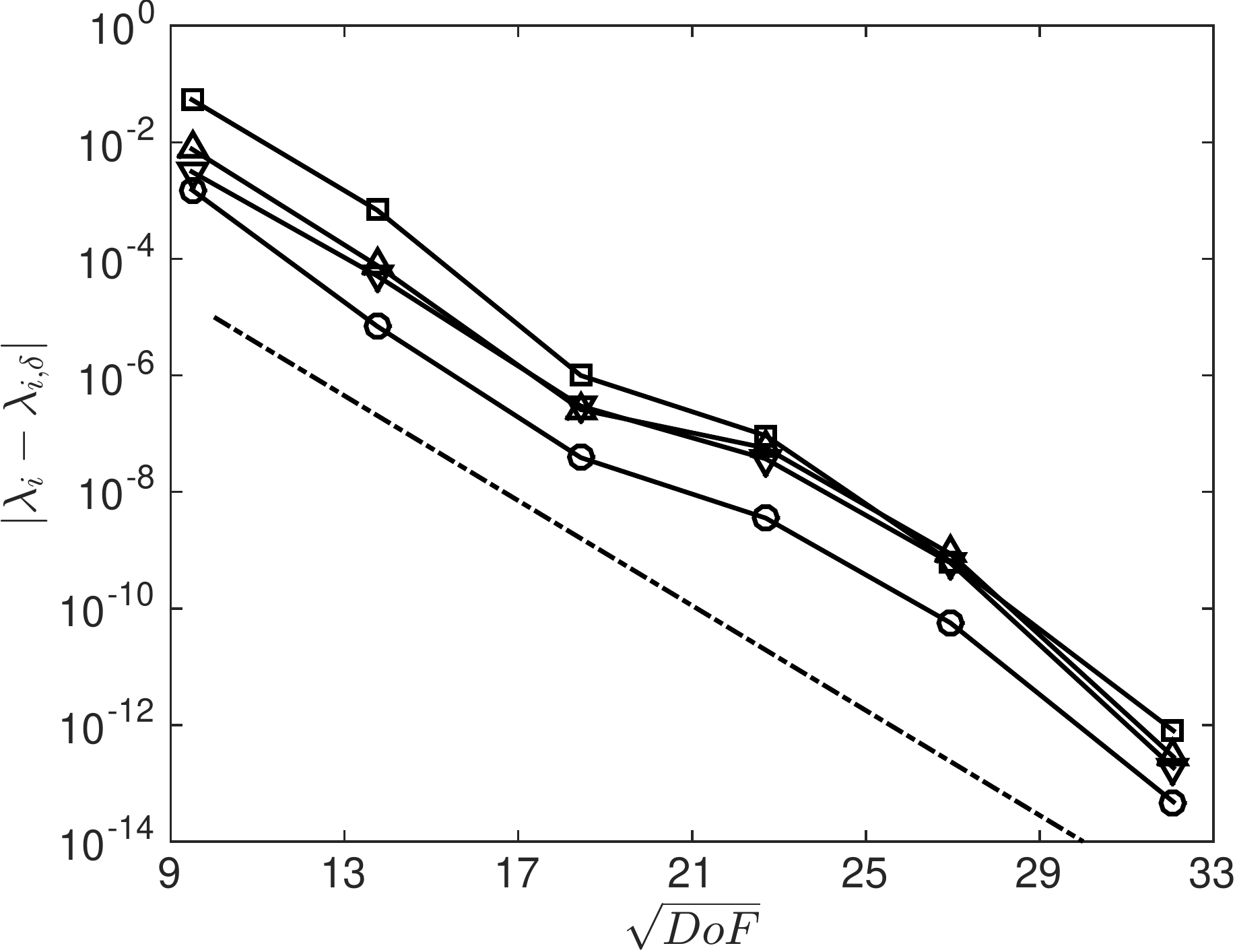}
\hspace*{\fill} (b).  $c=1/2$.\hspace*{\fill}
\end{minipage}\hspace*{\fill}%
\begin{minipage}[h]{0.32\textwidth}
\includegraphics[width=1\textwidth]{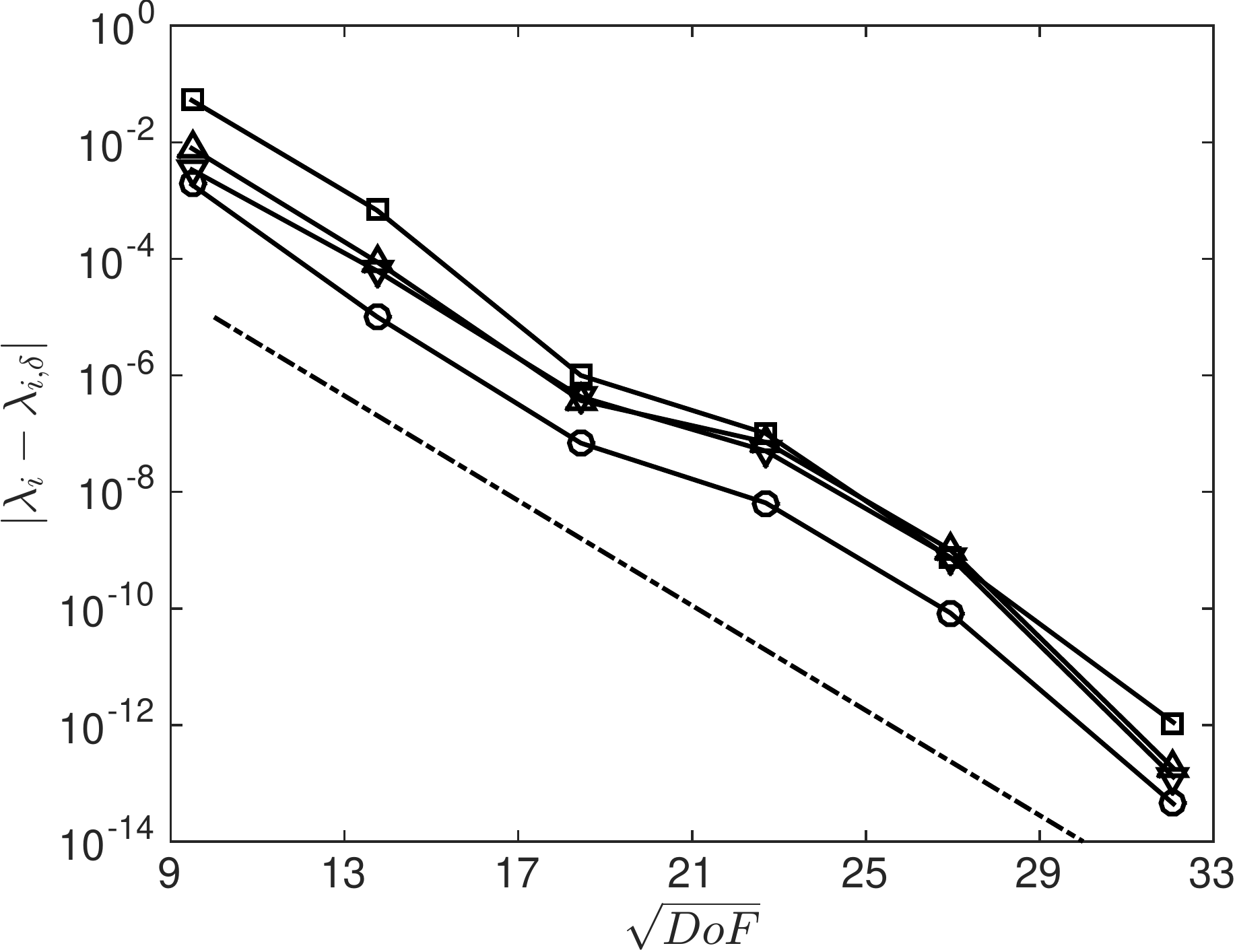}
\hspace*{\fill} (c).  $c=2/3$.\hspace*{\fill}
\end{minipage}\hspace*{\fill}%
\caption{MSEM approximation errors $|\lambda_i-\lambda_{i,\delta}|$  ($\circ: \lambda_1$,  $\triangledown:\lambda_2$, $\vartriangle:\lambda_3$ and  $ \square: \lambda_4$)  versus $\sqrt{DoF}$ on the L-shape domain $[-1,1]^2\setminus ( [0,1]\times[-1,0] )$:
The  dash-dot  lines are the exponential $y = 10^{-0.45 \sqrt{DoF}-0.5 }$.}
\label{fig:LMSEM}
\end{figure}

\begin{figure}[h!]
\hspace*{\fill}
\begin{minipage}[h]{0.32\textwidth}
\includegraphics[width=1\textwidth]{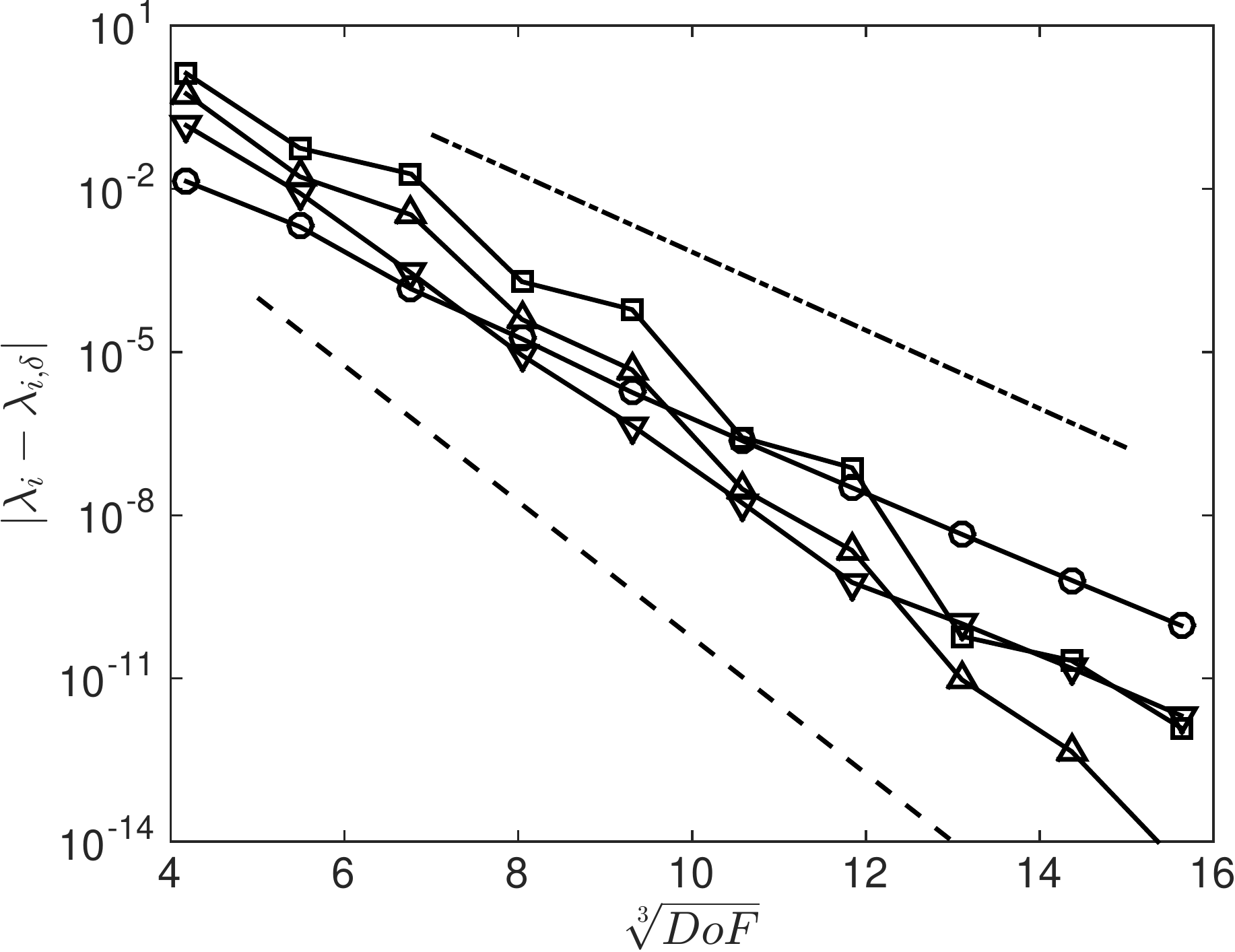}
\hspace*{\fill} (a).  $c=0$.\hspace*{\fill}
\end{minipage}\hspace*{\fill}%
\begin{minipage}[h]{0.32\textwidth}
\includegraphics[width=1\textwidth]{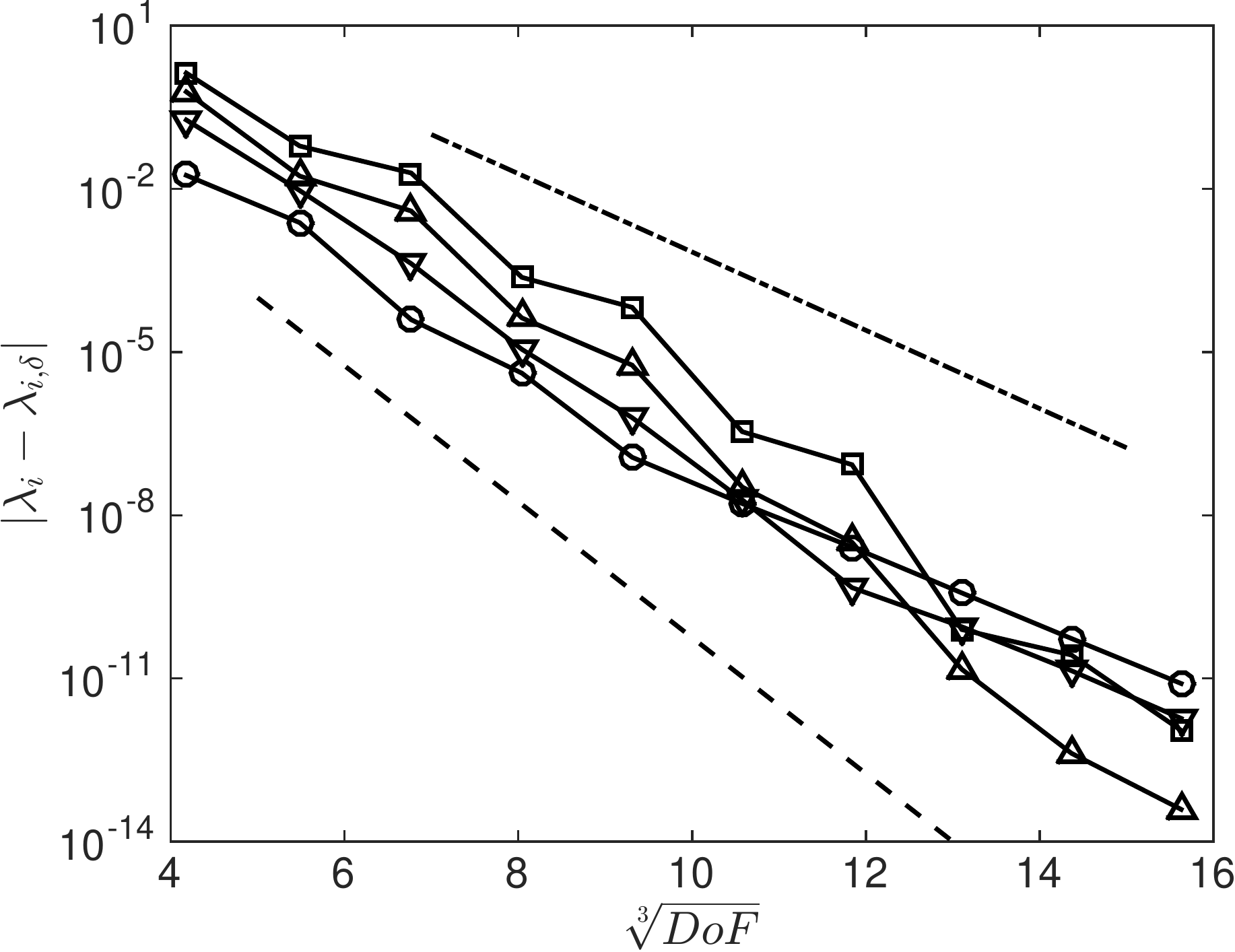}
\hspace*{\fill} (b).  $c=1/2$.\hspace*{\fill}
\end{minipage}\hspace*{\fill}%
\begin{minipage}[h]{0.32\textwidth}
\includegraphics[width=1\textwidth]{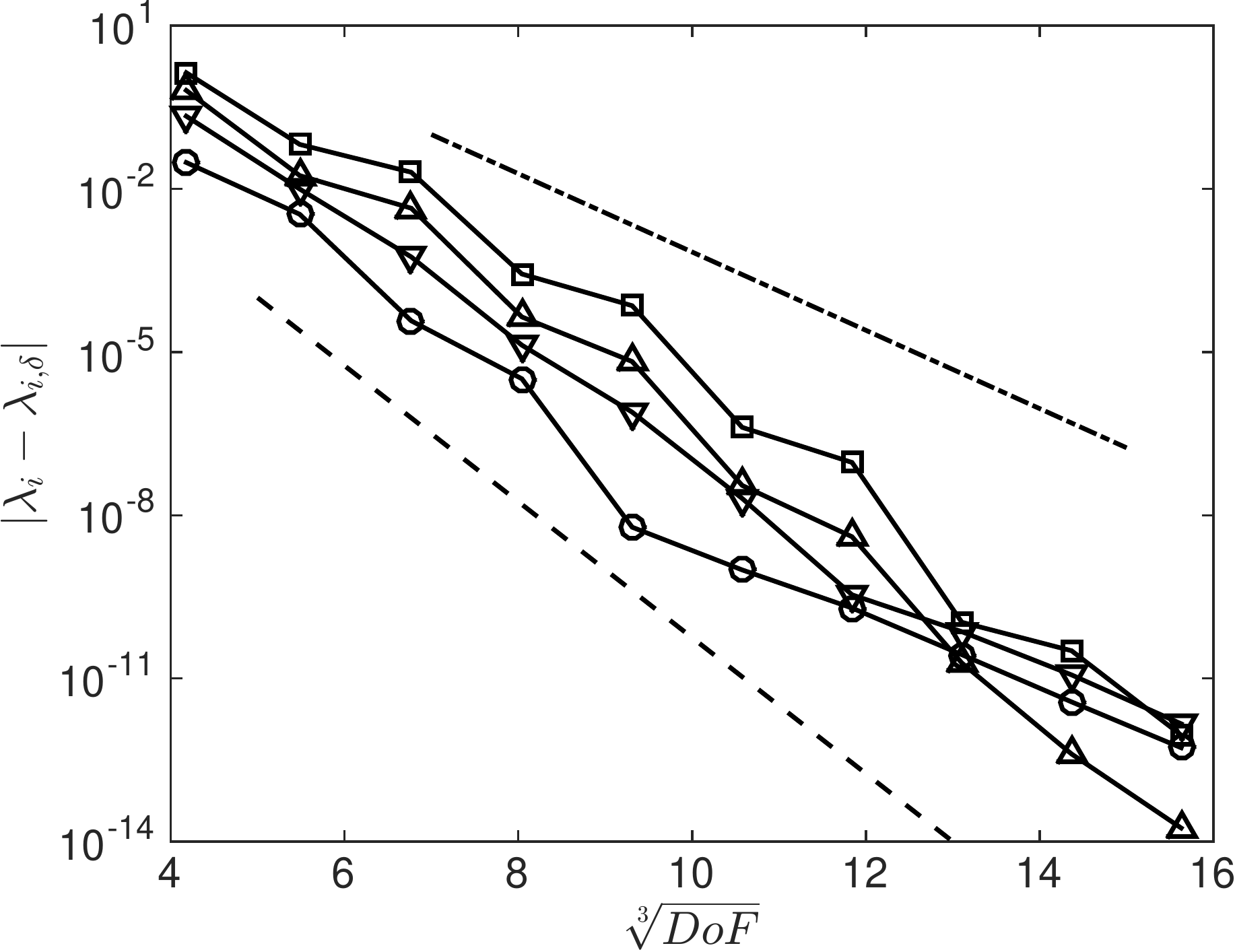}
\hspace*{\fill} (c).  $c=2/3$.\hspace*{\fill}
\end{minipage}\hspace*{\fill}%

\vspace*{0.5em}
\caption{GFEM approximation errors $|\lambda_i-\lambda_{i,\delta}|$  ($\circ: \lambda_1$,  $\triangledown:\lambda_2$, $\vartriangle:\lambda_3$ and  $ \square: \lambda_4$)  versus $\sqrt[3]{DoF}$ on the L-shape domain $[-1,1]^2\setminus ( [0,1]\times[-1,0] )$.
The dashed lines are the exponential $y = 10^{-1.25 \sqrt[3]{DoF}+2 }$, the dash-dot lines are the exponential $y = 10^{-0.72 \sqrt[3]{DoF}+3.32 }$.}
\label{fig:LGFEM}
\end{figure}

\iffalse
\begin{figure}[h!]
\hspace*{\fill}
\begin{minipage}[h]{0.35\textwidth}
\includegraphics[width=1\textwidth]{lae}
\hspace*{\fill} (a) \hspace*{\fill}
\end{minipage}\hspace*{\fill}%
\begin{minipage}[h]{0.35\textwidth}
\includegraphics[width=1\textwidth]{lae}
\hspace*{\fill} (b) \hspace*{\fill}
\end{minipage}\hspace*{\fill}%
\caption{(a).  A convergence history of MSEM, SEM and GFEM versus DoF.
(b). Absolute errors of MSEM for the first 33 eigenvalues with  analytic eigenfunctions.}
\end{figure}
\fi

\vspace*{-1.5em}
\paragraph{\bf Example 3} At last, let us consider the numerical verification of two isospectral geometries, which possess  the same Laplacian eigenvalues \cite{BCDS94,Kac96,Driscoll97},
see Figure \ref{fig:isospectral}.
To carry out the MSEM experiments, we first partition both geometries with four sectors  $\Omega_i$,  $1\le i\le 4$,
with the radius $R=1/3$,
which are located at the
vertices of the four reentrant/obtuse corners, respectively. Then $\Omega_5:=\Omega \setminus (\cup_{i=1}^4\Omega_i)$
is further decomposed into 10 right triangles (of size $\sqrt{2}$) and 18 curvilinear quadrilaterals, on which the standard
$C^0$-conforming spectral elements are recommended.

\begin{figure}[h!]
\hspace*{\fill}
\begin{minipage}[h]{0.35\textwidth}
\includegraphics[width=1\textwidth]{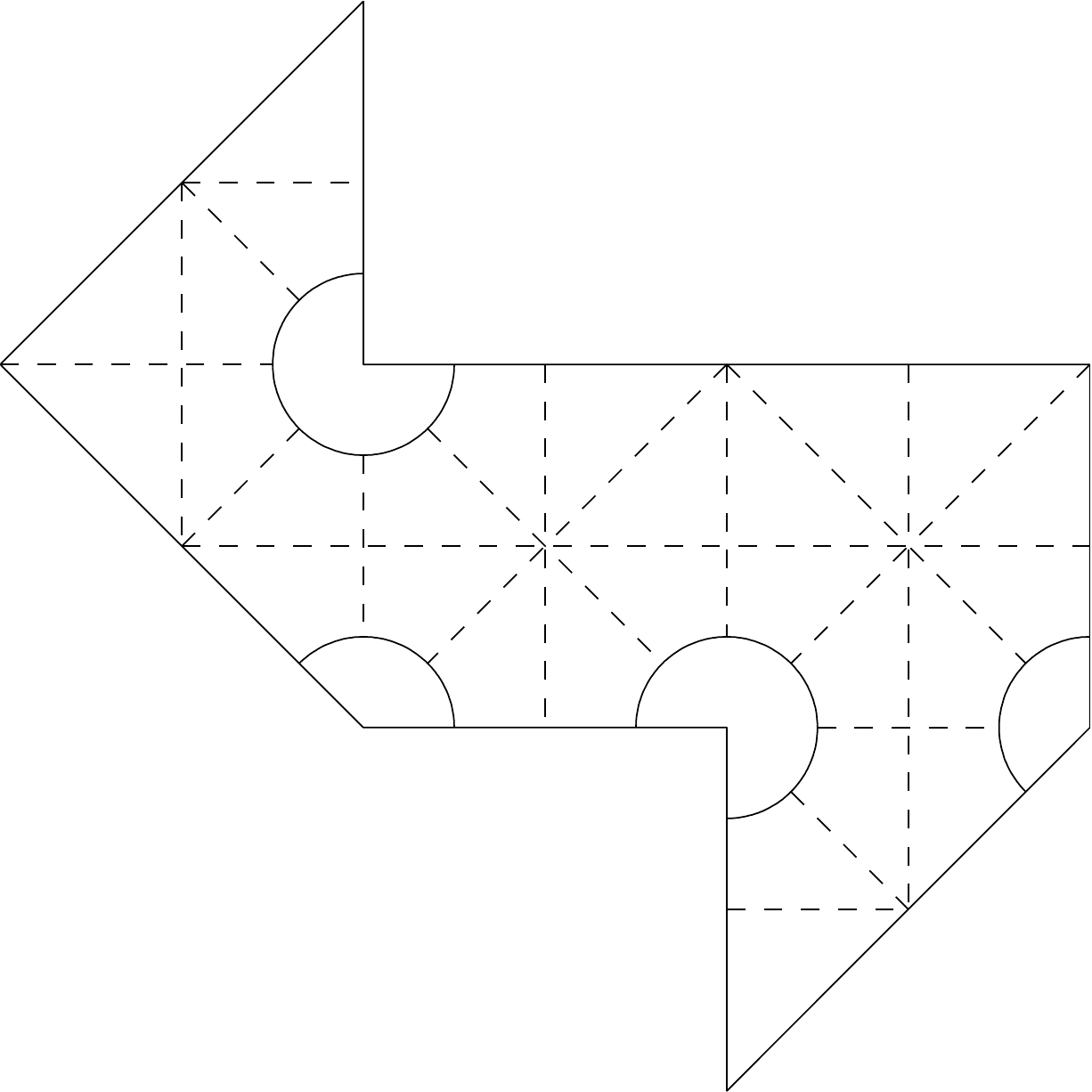}
\hspace*{\fill} (a) \hspace*{\fill}
\end{minipage}\hspace*{\fill}%
\begin{minipage}[h]{0.35\textwidth}
\includegraphics[width=1\textwidth]{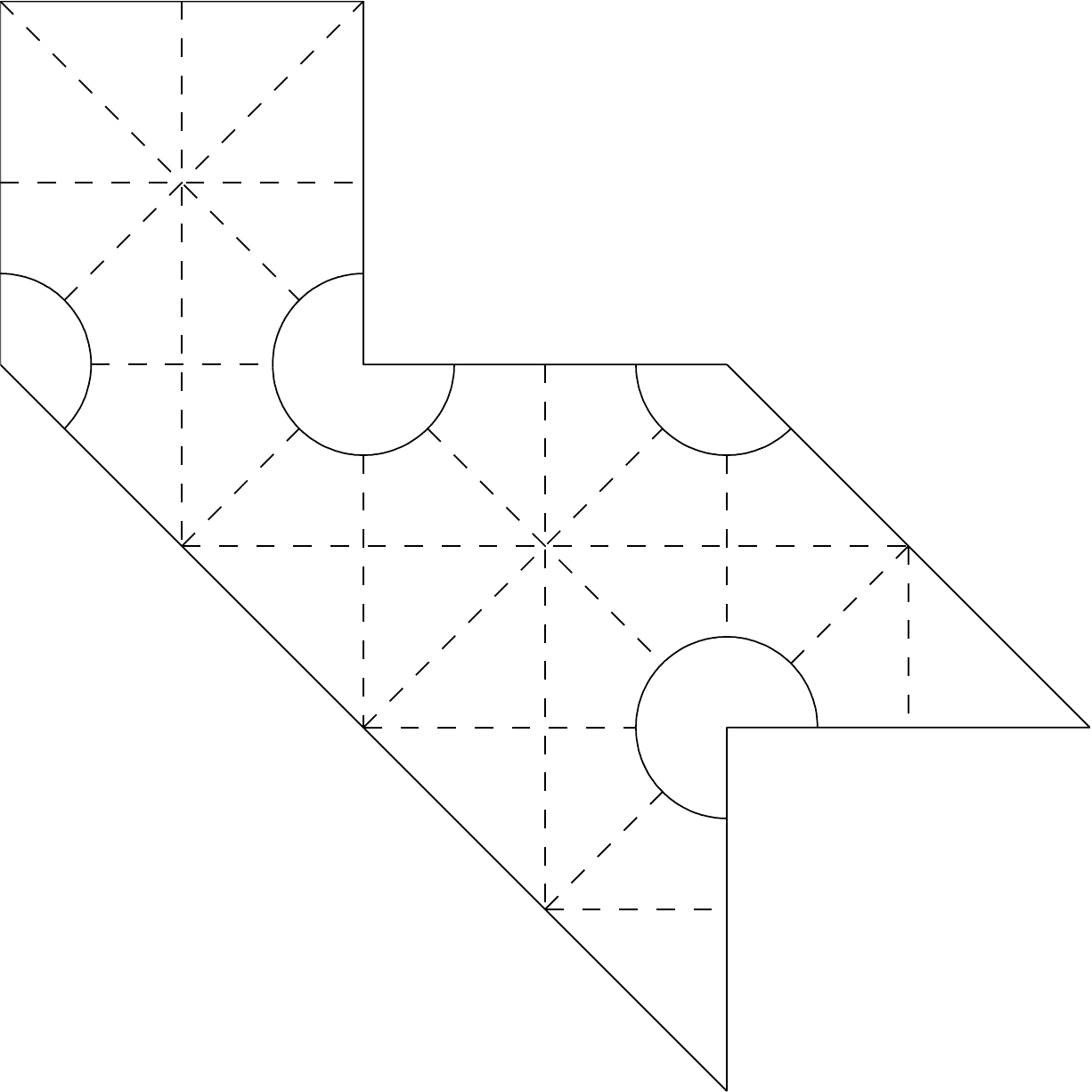}
\hspace*{\fill} (b) \hspace*{\fill}
\end{minipage}\hspace*{\fill}%
\caption{A pair of isospectral geometries and their nonconforming meshes for MSEM.}
\label{fig:isospectral}
\end{figure}

We now tabulate the first 25 computed eigenvalues  in Table \ref{tab:isospec} with $13$ decimal digits
by MSEM with the total degrees of freedom $4457$.  The computed eigenvalues are compared with those in 12 digits  by Driscoll \cite{Driscoll97}   (also by Betcke and Trefethen \cite{BT05,TB06}).
We note that the excerpted eigenvalues in the last column perfectly  match
the leftmost 12 rounded digits of
the approximate eigenvalues on both  the geometry (a) and the geometry (b),
and the second and the third columns differ only in the rightmost digit.
These partially illustrate the isospectral property of the two geometries together
with the effectiveness and efficiency of the MSEM proposed in the current paper.

\begin{table}[h!]
\caption{The 25 smallest numerical Laplacian eigenvalues on the isospectral geometries (a) and (b), together with those excerpted
 from \cite{Driscoll97}.}
\hfill%
\begin{tabular}{c|c|c|c}
No. & MSEM (a)            & MSEM (b)      & Driscoll\\   \hline
1 &   2.5379439997986 &   2.5379439997986 & 2.53794399980 \\ \hline
2 &   3.6555097135244 &   3.6555097135244 & 3.65550971352 \\ \hline
3 &   5.1755593562245 &   5.1755593562245 & 5.17555935622 \\ \hline
4 &   6.5375574437644 &   6.5375574437644 & 6.53755744376 \\ \hline
5 &   7.2480778625641 &   7.2480778625641 & 7.24807786256 \\ \hline
6 &   9.2092949984032 &   9.2092949984031 & 9.20929499840 \\ \hline
7 &  10.5969856913332 &  10.5969856913331 & 10.5969856913 \\ \hline
8 &  11.5413953955859 &  11.5413953955859 & 11.5413953956 \\ \hline
9 &  12.3370055013616 &  12.3370055013617 & 12.3370055014 \\ \hline
10&  13.0536540557280 &  13.0536540557280 & 13.0536540557 \\ \hline
11&  14.3138624642910 &  14.3138624642910 & 14.3138624643 \\ \hline
12&  15.8713026200093 &  15.8713026200093 & 15.8713026200 \\ \hline
13&  16.9417516879721 &  16.9417516879721 & 16.9417516880 \\ \hline
14&  17.6651184368431 &  17.6651184368430 & 17.6651184368 \\ \hline
15&  18.9810673876525 &  18.9810673876526 & 18.9810673877 \\ \hline
16&  20.8823950432823 &  20.8823950432823 & 20.8823950433 \\ \hline
17&  21.2480051773729 &  21.2480051773729 & 21.2480051774 \\ \hline
18&  22.2328517929733 &  22.2328517929735 & 22.2328517930 \\ \hline
19&  23.7112974848240 &  23.7112974848240 & 23.7112974848 \\ \hline
20&  24.4792340692739 &  24.4792340692739 & 24.4792340693 \\ \hline
21&  24.6740110027234 &  24.6740110027235 & 24.6740110027 \\ \hline
22&  26.0802400996599 &  26.0802400996599 & 26.0802400997 \\ \hline
23&  27.3040189211259 &  27.3040189211260 & 27.3040189211 \\ \hline
24&  28.1751285814531 &  28.1751285814533 & 28.1751285815 \\ \hline
25&  29.5697729132392 &  29.5697729132393 & 29.5697729132 \\
\end{tabular}
\hfill
\vspace*{0.3em}
 \label{tab:isospec}
\end{table}

 {\bf Conclusion Remarks.} In this work, we present a novel and effective way to handle operator singularity of the  inverse square potential
as well as the domain corner singularities. Although we do not provide a full convergence analysis, numerical evidences indicate
that our new methods are superior to existing methods including the $hp$ finite element  method using geometric meshes which is known for  the best convergence rate  with the presence of corner singularities.   Therefore, our approach can
serve as a better alternative to solve such kind of problems with singularity.

\appendix
\section{The proof of Lemma  \ref{SOBF} and  Lemma \ref{SOBP}}
\label{AppA}
Let $\nabla_0$ be the spherical gradient, which is the spherical part of $\nabla$ and involves
only derivatives in $\xi$, i.e.,
\begin{align*}
%\label{SGradient}
&  \nabla_0 = r( \nabla -  \xi \partial_r), \qquad  x=r\xi, \, \xi\in \sph.
\end{align*}
As a result,  $\xi \cdot \nabla_0=0$ and
\begin{align}
\label{InnerG}
 (\nabla u, \nabla v)_{\Omega}  %= \big(\frac{1}{r} \nabla_0 u + \xi \partial_r u,  \frac{1}{r} \nabla_0 v + \xi \partial_r v\big)_{\Omega}
 = \big(\nabla_0 u, \nabla_0v)_{r^{-2},\Omega} + \big(\partial_r u, \partial_r v)_{\Omega}, \quad u,v\in H^1(\Omega).
 \end{align}
Moreover,
it  holds that (\cite[p.\,16 and p.\, 26]{DaiXu2013})
\begin{align}
\label{BLSG}
&\Delta_0 = \nabla_0 \cdot \nabla_0,
\\
\label{InnerSG}
&( \nabla_0 u,  \nabla_0v)_{\sph} = - (\Delta_0 u,  v)_{\sph}, \qquad
u \in H^2(\sph),\, v\in H^1(\sph).
\end{align}

We next prove that for any $u,v\in H^1(0,1)$,
\begin{align}
\label{II0}
\begin{split}
\int_0^1  \partial_r \big(&r^{d/2-1-\beta} u\big) \partial_r \big(r^{d/2-1-\beta} v\big)
r^{2\beta+1} dr
= \int^1_0\Big[ r^{2} \partial_r u \partial_r v
+     (\beta^2-(d/2-1)^2)  u v\Big]  r^{d-3}  dr
\\
&+  (d/2-1-\beta) [u(1)v(1)- \delta_{d,2} u(0)v(0) ]
.
\end{split}
\end{align}
Actually, a technical reduction leads to
\begin{align*}
\int_0^1 & \partial_r \big[r^{d/2-1-\beta} u\big] \partial_r \big[r^{d/2-1-\beta} v\big]
r^{2\beta+1} dr
%\\ = & \int_0^1  \big[r\partial_r u + (d/2-1-\beta) u \big]   \big[r\partial_r u + (d/2-1-\beta)  v \big]  r^{d-3} dr
\\
= & \int_0^1 \Big[r^{d-1}\partial_r u \partial_r u  +  r^{d-3}(d/2-1-\beta)^2 u v
+ (d/2-1-\beta) r^{d-2}\partial_r(uv)\Big]
  dr\\
=& \int_0^1 \Big[r^{d-1}\partial_r u \partial_r u  +  r^{d-3}(d/2-1-\beta)^2 u v
- (d-2) (d/2-1-\beta) r^{d-3} uv \Big]
  dr
  \\& +
   (d/2-1-\beta) [u(1)v(1)- 0^{d-2} u(0)v(0) ] \\
= & \int^1_0\Big[ r^{d-1} \partial_r u \partial_r v
+  \big(\beta^2-(d/2-1)^2)\big)  r^{d-3} u v\Big] dr + (d/2-1-\beta) [u(1)v(1)- \delta_{d,2} u(0)v(0) ],
\end{align*}
where the second equality sign was derived by integration by part.

\vspace*{0.8em}

We now concentrate on the proofs of Lemma \ref{SOBF} and \ref{SOBP} in the main body of this paper.
\begin{proof}[Proof of Lemma \ref{SOBF}] We first note that
\begin{align*}
\int_{\ball} u(x) dx  =  \int_{0}^{1}r^{d-1}dr \int_{\sph} u(r\xi) d\sigma(\xi).
\end{align*}
Then, by \eqref{InnerG}, \eqref{InnerSG} and \eqref{eq:LaplaceBeltrami}, we obtain that
\begin{align*}
      \big(\nabla & Q_{k,\ell}^{n}, \nabla Q_{j,\iota}^{m}\big)_{\ball} + c^2 \big(Q_{k,\ell}^{n},Q_{j,\iota}^{m}\big)_{r^{-2},{\ball}}
\\
      =&\, \big(\partial_r  Q_{k,\ell}^{n}, \partial_r Q_{j,\iota}^{m}\big)_{\ball}  + \big(\nabla_0  Q_{k,\ell}^{n}, \nabla_0 Q_{j,\iota}^{m}\big)_{r^{-2},\ball} + c^2 \big(Q_{k,\ell}^{n},Q_{j,\iota}^{m}\big)_{r^{-2},{\ball}}
\\
      =&\, \big(\partial_r  Q_{k,\ell}^{n}, \partial_r Q_{j,\iota}^{m}\big)_{\ball}  - \big(\Delta_0  Q_{k,\ell}^{n}, Q_{j,\iota}^{m}\big)_{r^{-2},\ball} + c^2 \big(Q_{k,\ell}^{n},Q_{j,\iota}^{m}\big)_{r^{-2},{\ball}}
\\
      =&\, \big(\partial_r  Q_{k,\ell}^{n}, \partial_r Q_{j,\iota}^{m}\big)_{\ball}  + \big( c^2+ n(n+d-2)\big) \big(Q_{k,\ell}^{n},Q_{j,\iota}^{m}\big)_{r^{-2},{\ball}}  .
\end{align*}
In the sequel, we temporarily set $q_{k,n}(r)= \frac{2k+2\beta_n}{k+2\beta_n} J^{-1,2\beta_n}_k(2r-1) r^{\beta_n+1-d/2}$
and  get further from \eqref{II0}, \eqref{GJacobir} , \eqref{diff} and \eqref{Jorth} that
\begin{align}
\label{SOBF2}
\begin{split}
      \big(\nabla & Q_{k,\ell}^{n}, \nabla Q_{j,\iota}^{m}\big)_{\ball} + c^2 \big(Q_{k,\ell}^{n},Q_{j,\iota}^{m}\big)_{r^{-2},{\ball}}
\\
 =&\,  \int_{\SS^{d-1}}  Y^n_{\ell}(\xi) Y^m_{\iota}(\xi)  d\sigma(\xi)   \int_{0}^1  \Big[r^{d-1}  \partial_r q_{k,n}   \partial_r q_{j,n}
   +  \big( c^2+ n(n+d-2)\big) r^{d-3} q_{k,n}   q_{j,n}
   \Big] dr
   \\
   =&   \, \omega_d  \delta_{m,n} \delta_{\ell,\iota}      \int_{0}^1  \partial_r \big[r^{d/2-1-\beta_n}   q_{k,n}   \big]
   \partial_r \big[r^{d/2-1-\beta_n} q_{j,n}   \big]
r^{2\beta_n+1}   dr
\\
&\, + \, \omega_d  \delta_{m,n} \delta_{\ell,\iota}   (\beta_n+1-d/2) [q_{k,n}(1)q_{j,n}(1)- \delta_{d,2} q_{k,n}(0)q_{j,n}(0) ]
\\
=&   \, \omega_d  \delta_{m,n} \delta_{\ell,\iota}   \frac{(2k+2\beta_n)(2j+2\beta_n)}{(k+2\beta_n)(j+2\beta_n)}   \int_{0}^1  \partial_rJ^{-1,2\beta_n}_k(2r-1)  \partial_rJ^{-1,2\beta_n}_j(2r-1)r^{2\beta_n+1} dr
\\
&\, +  \, \omega_d  \delta_{m,n} \delta_{\ell,\iota} (\beta_n+1-d/2)  \frac{(2k+2\beta_n)(2j+2\beta_n)}{(k+2\beta_n)(j+2\beta_n)}  [\delta_{k,0}\delta_{j,k}
-\delta_{\beta_n+1-d/2,0} \delta_{d,2}  ]
\\
=&   \, \omega_d  \delta_{m,n} \delta_{\ell,\iota}  (2k+2\beta_n) (2j+2\beta_n)    \int_{0}^1  J^{0,2\beta_n+1}_{k-1}(2r-1)  J^{0,2\beta_n+1}_{j-1}(2r-1)r^{2\beta_n+1} dr
\\
&\, +  \, \omega_d  \delta_{m,n} \delta_{\ell,\iota} (\beta_n+1-d/2)  \delta_{k,0}\delta_{j,k}
\\
=&   \, \omega_d  \delta_{m,n} \delta_{\ell,\iota} \delta_{k,j}
\left[ (2k+2\beta_n) (1-\delta_{k,0})  + (\beta_n-d/2+1) \delta_{k,0}   \right],
\end{split}
\end{align}
which gives \eqref{Qortha}.

Next, it is easy to see that
 \begin{align}
 \label{InnerQ}
 \begin{split}
 (Q_{k,\ell}^{n},&Q_{j,\iota}^{m})_{\ball}
 =\, \int_{\sph} Y_{\ell}^n(\xi) Y_{\ell}^n(\xi) d\sigma(\xi)
 \\
 & \times  \frac{(2k+2\beta_n)(2j+2\beta_m)}{(k+2\beta_n)(j+2\beta_m)} \int_{0}^1  J^{-1,2\beta_n}_k(2r-1) J^{-1,2\beta_m}_j(2r-1) r^{\beta_n+\beta_m+1}  dr \,
 \\
 =&\, \omega_d\delta_{n,m} \delta_{k,j}  \frac{(2k+2\beta_n)(2j+2\beta_m)}{(k+2\beta_n)(j+2\beta_m)}  \int_{0}^1  J^{-1,2\beta_n}_k(2r-1) J^{-1,2\beta_m}_j(2r-1) r^{2\beta_n+1}  dr.
 \end{split}
 \end{align}
To proceed the proof of \eqref{Qorthb},
we  shall resort to the following identity on generalized Jacobi polynomials,
\begin{align}
\label{Jac10}
J^{\alpha,\beta}_k(\zeta) = \frac{k+\alpha+\beta+1}{2k+\alpha+\beta+1}J^{\alpha+1,\beta}_k(\zeta)
-\frac{k+\beta}{2k+\alpha+\beta+1}J^{\alpha+1,\beta}_{k-1}(\zeta),
\end{align}
which is stemmed from \cite[p.\,304]{AAR1999}  by extension.
Using \eqref{Jac10} twice together with \eqref{symmJ} yields
\begin{align*}
\begin{split}
&J^{\alpha,\beta}_k(\zeta) = \frac{(k+\alpha+\beta+1)(k+\alpha+\beta+2) }{(2k+\alpha+\beta+1)(2k+\alpha+\beta+2)}
 J^{\alpha+1,\beta+1}_k(\zeta)
 \\
&\quad+\frac{(\alpha-\beta)(k+\alpha+\beta+1)}{(2k+\alpha+\beta)(2k+\alpha+\beta+2)} J^{\alpha+1,\beta+1}_{k-1}(\zeta)
-\frac{(k+\alpha)(k+\beta)}{(2k+\alpha+\beta)(2k+\alpha+\beta+1)}J^{\alpha+1,\beta+1}_{k-2}(\zeta).
\end{split}
\end{align*}
In particular,
\begin{align}
\label{Jac11}
\frac{2k+\beta}{k+\beta} J_{k}^{-1,\beta} = \frac{k+\beta+1}{2k+\beta+1} J_{k}^{0,\beta+1}
-\frac{(1+\beta)(2k+\beta)}{(2k+\beta-1) (2k+\beta+1)} J_{k-1}^{0,\beta+1}
-\frac{k-1}{2k+\beta-1}J_{k-2}^{0,\beta+1} .
 \end{align}
 Then a combination of  \eqref{InnerQ}, \eqref{Jac11} and \eqref{Jorth}  immediately yields \eqref{Qorthb}.
This completes the proof of Lemma \ref{SOBP}.
\end{proof}

\begin{proof}[Proof of Lemma \ref{SOBP}]
Let us temporarily set $p_{k,n}(r)= \frac{2k+\beta_n}{k+\beta_n} J^{-1,\beta_n}_k(2r^2-1) r^{\beta_n+1-d/2}$.
Then a  similar reduction as in \eqref{SOBF2} yields
\begin{align*}
      \big(\nabla & P_{k,\ell}^{n}, \nabla P_{j,\iota}^{m}\big)_{\ball} + c^2 \big(P_{k,\ell}^{n},P_{j,\iota}^{m}\big)_{r^{-2},{\ball}}
   \\
   =&   \, \omega_d  \delta_{m,n} \delta_{\ell,\iota}      \int_{0}^1  \partial_r \big[r^{d/2-1-\beta_n}   p_{k,n}   \big]
   \partial_r \big[r^{d/2-1-\beta_n} p_{j,n}   \big]
r^{2\beta_n+1}   dr
\\
&\, + \, \omega_d  \delta_{m,n} \delta_{\ell,\iota}   (\beta+1-d/2) [p_{k,n}(1)p_{j,n}(1)- \delta_{d,2} p_{k,n}(0)p_{j,n}(0) ]
\\
=&   \, \omega_d  \delta_{m,n} \delta_{\ell,\iota}   \frac{(2k+\beta_n)(2j+\beta_n)}{(k+\beta_n)(j+\beta_n)}   \int_{0}^1  \partial_rJ^{-1,\beta_n}_k(2r^2-1)  \partial_rJ^{-1,\beta_n}_j(2r^2-1)r^{2\beta_n+1} dr
\\
&\, +  \, \omega_d  \delta_{m,n} \delta_{\ell,\iota} (\beta_n+1-d/2)   \frac{(2k+\beta_n)(2j+\beta_n)}{(k+\beta_n)(j+\beta_n)}[\delta_{k,0}\delta_{j,k}
-\delta_{\beta_n+1-d/2,0} \delta_{d,2}  ]
\\
=&   \, \omega_d  \delta_{m,n} \delta_{\ell,\iota}  2 (2k+\beta_n) (2j+\beta_n)    \int_{0}^1  J^{0,\beta_n+1}_{k-1}(2r^2-1)  J^{0,\beta_n+1}_{j-1}(2r^2-1)r^{2\beta_n+2} dr^2
\\
&\, +  \, \omega_d  \delta_{m,n} \delta_{\ell,\iota} (\beta_n+1-d/2)  \delta_{k,0}\delta_{j,k}
\\
=&   \, \omega_d  \delta_{m,n} \delta_{\ell,\iota} \delta_{k,j}
\left[ 2(2k+\beta_n) (1-\delta_{k,0})  + (\beta_n-d/2+1) \delta_{k,0}   \right],
\end{align*}
which gives \eqref{Portha}.

Further, we note that
 \begin{align*}
 \begin{split}
 (P_{k,\ell}^{n},P_{j,\iota}^{m})_{\ball}
  =&\, \omega_d\delta_{n,m} \delta_{k,j}  \frac{(2k+\beta_n)(2j+\beta_m)}{(k+\beta_n)(j+\beta_m)}  \int_{0}^1  J^{-1,\beta_n}_k(2r^2-1) J^{-1,\beta_m}_j(2r^2-1) r^{2\beta_n+1}  dr.
 \end{split}
 \end{align*}
 Then \eqref{Porthb} is an immediate consequence of   \eqref{Jac10} and \eqref{Jorth}.
 This proof of Lemma \ref{SOBP} is now  completed.
\end{proof}

\bibliographystyle{siam}
\bibliography{SchRef}
\end{document}